\documentclass{article}
\usepackage{array,amsmath,amsthm}
\numberwithin{equation}{section}
\usepackage{amssymb}
\usepackage{indentfirst}
\usepackage{geometry}
\allowdisplaybreaks
\usepackage{xcolor}
\begin{document}

\newtheorem{thm}{Theorem}[section]
\newtheorem{cor}[thm]{Corollary}
\newtheorem{prop}[thm]{Proposition}
\newtheorem{conj}[thm]{Conjecture}
\newtheorem{lem}[thm]{Lemma}
\newtheorem{Def}[thm]{Definition}
\newtheorem{rem}[thm]{Remark}
\newtheorem{prob}[thm]{Problem}
\newtheorem{ex}{Example}[section]

\newcommand{\be}{\begin{equation}}
\newcommand{\ee}{\end{equation}}
\newcommand{\ben}{\begin{enumerate}}
\newcommand{\een}{\end{enumerate}}
\newcommand{\beq}{\begin{eqnarray}}
\newcommand{\eeq}{\end{eqnarray}}
\newcommand{\beqn}{\begin{eqnarray*}}
\newcommand{\eeqn}{\end{eqnarray*}}
\newcommand{\bei}{\begin{itemize}}
\newcommand{\eei}{\end{itemize}}

\newcommand{\pa}{{\partial}}
\newcommand{\V}{{\rm V}}
\newcommand{\R}{{\bf R}}
\newcommand{\K}{{\rm K}}
\newcommand{\e}{{\epsilon}}
\newcommand{\tomega}{\tilde{\omega}}
\newcommand{\tOmega}{\tilde{Omega}}
\newcommand{\tR}{\tilde{R}}
\newcommand{\tB}{\tilde{B}}
\newcommand{\tGamma}{\tilde{\Gamma}}
\newcommand{\fa}{f_{\alpha}}
\newcommand{\fb}{f_{\beta}}
\newcommand{\faa}{f_{\alpha\alpha}}
\newcommand{\faaa}{f_{\alpha\alpha\alpha}}
\newcommand{\fab}{f_{\alpha\beta}}
\newcommand{\fabb}{f_{\alpha\beta\beta}}
\newcommand{\fbb}{f_{\beta\beta}}
\newcommand{\fbbb}{f_{\beta\beta\beta}}
\newcommand{\faab}{f_{\alpha\alpha\beta}}

\newcommand{\pxi}{ {\pa \over \pa x^i}}
\newcommand{\pxj}{ {\pa \over \pa x^j}}
\newcommand{\pxk}{ {\pa \over \pa x^k}}
\newcommand{\pyi}{ {\pa \over \pa y^i}}
\newcommand{\pyj}{ {\pa \over \pa y^j}}
\newcommand{\pyk}{ {\pa \over \pa y^k}}
\newcommand{\dxi}{{\delta \over \delta x^i}}
\newcommand{\dxj}{{\delta \over \delta x^j}}
\newcommand{\dxk}{{\delta \over \delta x^k}}

\newcommand{\px}{{\pa \over \pa x}}
\newcommand{\py}{{\pa \over \pa y}}
\newcommand{\pt}{{\pa \over \pa t}}
\newcommand{\ps}{{\pa \over \pa s}}
\newcommand{\pvi}{{\pa \over \pa v^i}}
\newcommand{\ty}{\tilde{y}}
\newcommand{\bGamma}{\bar{\Gamma}}

\title {The Bonnet-Myers theorem on Finsler manifolds with integral weighted Ricci curvature bounds
\footnote{The first author is supported by the National Natural Science Foundation of China (12371051, 12141101, 11871126)}}
\author{ Xinyue Cheng, \ Liulin Liu}
\date{}
\maketitle

\begin{abstract}
In this paper, we derive some new relative volume comparison theorems and Bishop-Gromov volume comparisons on Finsler metric measure manifolds, all of which are controlled by the integral weighted Ricci curvature. In particular, we establish a Bishop-Gromov volume comparison theorem for nonconcentric balls. Based on these, we prove a theorem of Bonnet-Myers type on Finsler metric measure manifolds with integral weighted Ricci curvature bounds.\\
{\bf Keywords:} Finsler metric measure manifold; integral weighted Ricci curvature;  volume comparison; Bonnet-Myers theorem\\
{\bf Mathematics Subject Classification:} 53C60, 53B40, 53C23, 58C35
\end{abstract}

\section{Introduction}\label{Introd}

Finding the topological, geometrical or analytical properties induced by curvature bounds is an important topic in Riemannian geometry. The classical Bonnet-Myers theorem in Riemannian geometry says that, if the Ricci curvature of an  $n$-dimensional complete connected Riemannian manifold $(M,g)$ satisfies ${\rm Ric} \geq (n-1)H$ for some constant $H >0$, then manifold $M$ is in fact compact with finite fundamental group and its diameter is at most $\frac{\pi}{\sqrt{H}}$ \cite{Myers}. Since it occurs, there have been many subsequent generalizations of this result (e.g. see \cite{WeiWylie, WuJY2}).

Many geometric problems in Riemannian geometry lead naturally to integral curvatures. For example, the isoperimetric inequalities, the Sobolev constant estimate, the gradient estimate, the heat kernel estimate, geometric variational problems, and so on. Comparing with pointwise curvature lower bound, integral curvature bound is a global condition.  From 1990s, Bonnet-Myers theorem was generalized to the Riemannian metric measure spaces or the smooth metric measure spaces with integral curvature bounds.   In the fundamental work \cite{PeterWei}, the important Laplacian comparison and volume comparison are generalized to Riemannian manifolds with integral Ricci curvature lower bounds. Further,  the Cheeger-Colding-Naber theory was successfully extended to the manifolds with integral Ricci curvature bounds \cite{PeterW2}. Based on these, Petersen-Sprouse \cite{PeterSprouse} investigated how to generalize the classical diameter bound estimate for manifolds with positive Ricci curvature to the situation where the manifolds have integral Ricci curvature bound and got a rough diameter bound. Further, Aubry \cite{Aubry} further refined this diameter bound by applying the comparison results in \cite{PeterWei,PeterW2} to star-shaped domains. Besides, Sprouse \cite{Spro} derived a diameter upper bound and proved the finiteness of fundamental group for complete Riemannian manifolds with integral Ricci curvature bounds and satisfying ${\rm Ric}\geq (n-1)k \ (k\leq 0)$.
Moreover, Wu \cite{WuJY1} got diameter estimates, eigenvalue estimates, and volume growth estimates on smooth metric measure spaces with their normalized integral smallness for Bakry-\'{E}mery Ricci tensor. Further, inspired by Aubry's work, Li-Wu-Zheng \cite{LiWuYu} got a new theorem of Bonnet-Myers type on Riemannian metric measure space $(M, g, e^{-f} {\rm Vol}_{g})$ with the integral Bakry-\'{E}mery Ricci tensor bound which sharpens  Wu's result in \cite{WuJY1}.

Finsler geometry is just Riemannian geometry without the quadratic restriction \cite{Chern}. It is natural to derive the Bonnet-Myers theorem on Finsler metric measure manifolds. Bao-Chern-Shen \cite{BaoChernShen}  proved the Bonnet-Myers theorem on a forward geodesically complete connected Finsler manifold $(M, F)$ under a uniform positive lower bound on the Ricci curvature. Concretely, if ${\rm Ric}\geq (n-1)K >0$, they proved that $M$ is in fact compact and has finite fundamental group, and the diameter of $M$ is at most $\frac{\pi}{\sqrt{K}}$ (\cite{BaoChernShen}, Theorem 7.7.1). Later, Ohta \cite{Ohta0} obtained a weighted version of the Bonnet-Myers theorem on forward complete connected Finsler metric measure manifold $(M, F, m)$. He proved that, if the weighted Ricci curvature  ${\rm Ric} _{N} \geq (N-1)K$ for some $K>0$ and $N \in [n, \infty)$, then $M$ is in fact compact and has finite fundamental group, and the diameter of $M$ satisfies ${\rm diam} (M) \leq \frac{\pi}{\sqrt{K}}$ (also see \cite{Ohta1}, Corollary 9.17). Further, the first author and Shen \cite{ChSh} derived a new weighted version of the Bonnet-Myers theorem on forward complete connected Finsler metric measure manifold $(M, F, m)$ satisfying that ${\rm Ric}_{\infty} \geq K, \  |{\bf S}|\leq \delta$ for some $K>0$ and $\delta \geq 0$. In this case, the diameter of $M$ satisfies the following
\[
 {\rm diam} (M) \leq \frac{\pi}{\sqrt{K}} \Big ( \frac{\delta}{\sqrt{K}} + \sqrt{ \frac{\delta^2}{K} + n-1}\Big).
\]
In particular, $M$ is compact (\cite{ChSh}, Theorem 4.1). Here, ${\rm Ric}_{\infty}$ is the $\infty$-weighted Ricci curvature and ${\bf S}$ denotes the S-curvature of  $(M, F, m)$. On the other hand, Zhao \cite{Zhao} studied the integral of the Ricci curvature over metric balls in a Finsler manifold and obtained a  theorem of Bonnet-Myers type on forward complete Finsler metric measure manifold $(M, F, m)$ with integral Ricci curvature bounds and satisfying ${\rm Ric}\geq -(n-1)k^{2}$ and $\Lambda_{F}\leq \delta^2$ for some $k\in \mathbb{R}$ and $\delta \geq 1$ (\cite{Zhao}, Theorem 1.1). Here, $\Lambda_{F}$ is the reversibility of $F$.

In this paper, we will mainly focus on the Bonnet-Myers theorem on Finsler metric measure manifolds with integral weighted Ricci curvature bounds. Firstly, we will introduce two Laplacian comparison theorems in Section \ref{Laplacian}. Then,  in Section \ref{volcom}, we will establish two relative volume comparison theorems  (Theorem \ref{Svol} and Theorem \ref{vol}) and a Bishop-Gromov volume comparison  theorem for nonconcentric balls (Theorem \ref{noncon}), all of which are controlled by the integral weighted Ricci curvature. We will also give a Bishop-Gromov volume comparison theorem under integral weighted Ricci curvature bounds (Theorem \ref{BG}) in Section \ref{volcom}. Finally, we will prove a theorem (Theorem \ref{boma}) of Bonnet-Myers type on Finsler metric measure manifolds with integral weighted Ricci curvature bounds  in Section \ref{bon-May}. Our results in this paper are all new in Finsler geometry.

\section{Preliminaries}

In this section, we briefly review some necessary definitions, notations and  fundamental results in Finsler geometry. For more details, we refer to \cite{BaoChernShen, ChernShen, Ohta1, Shen1}.

Let $M$ be an $n$-dimensional smooth manifold. A Finsler metric on manifold $M$ is a function $F: T M \longrightarrow[0, \infty)$  satisfying the following properties: (1) $F$ is $C^{\infty}$ on $TM\backslash\{0\}$; (2) $F(x,\lambda y)=\lambda F(x,y)$ for any $(x,y)\in TM$ and all $\lambda >0$; (3)  $F$ is strongly convex, that is, the matrix $\left(g_{ij}(x,y)\right)=\left(\frac{1}{2}(F^{2})_{y^{i}y^{j}}\right)$ is positive definite for any nonzero $y\in T_{x}M$. The pair $(M,F)$ is called a Finsler manifold and $g:=g_{ij}(x,y)dx^{i}\otimes dx^{j}$ is called the fundamental tensor of $F$.
For a non-vanishing smooth vector field $V$ on $M$, one can introduce the weighted Riemannian metric $g_V$ on $M$ given by
\be
g_{V}(y, w)=g_{ij}(x, V_x)y^i w^j  \label{weiRiem}
\ee
for $y,\, w\in T_{x}M$. In particular, $g_V(V,V)=F^{2}(x,V)$.

We define the reverse metric $\overleftarrow{F} $ of $F$ by $\overleftarrow{F}(x, y):=F(x,-y)$ for all $(x, y) \in T M$. It is easy to see that $\overleftarrow{F}$ is also a Finsler metric on $M$. A Finsler metric $F$ on $M$ is said to be reversible if $\overleftarrow{F}(x, y)=F(x, y)$ for all $(x, y) \in T M$. Otherwise, we say $F$ is irreversible.
In order to overcome the deficiencies  caused by irreversibility, Rademacher defined the reversibility $\Lambda_{F}$ of $F$ by
\[
\Lambda_{F}:=\sup _{(x, y) \in TM \backslash\{0\}} \frac{\overleftarrow{F}(x, y)}{F(x, y)}.
\]
Obviously, $\Lambda_{F} \in [1, \infty]$ and $\Lambda_{F}=1$ if and only if $F$ is reversible \cite{Ra}.

Let $(M,F)$ be a Finsler manifold of dimension $n$. The pull-back $\pi ^{*}TM$ admits a unique linear connection  $D$, which is called the Chern connection.  Given a non-vanishing vector field $V$ on $M$, the Riemannian curvature $R^V$ is defined by
$$
R^V(X, Y) Z=D_X^V D_Y^V Z-D_Y^V D_X^V Z-D_{[X, Y]}^V Z
$$
for any vector fields $X$, $Y$, $Z$ on $M$, where $D^{V}_{X}Y$ is the covariant derivative with respect to the reference vector $V$.  For two linearly independent vectors $V, W \in T_x M \backslash\{0\}$, the flag curvature is defined by
$$
\mathcal{K}^V(V, W)=\frac{g_V\left(R^V(V, W) W, V\right)}{g_V(V, V) g_V(W, W)-g_V(V, W)^2}.
$$
Then the Ricci curvature is defined as
$$
\operatorname{Ric}(V):=F(x, V)^{2} \sum_{i=1}^{n-1} \mathcal{K}^V\left(V, e_i\right),
$$
where $e_1, \ldots, e_{n-1}, \frac{V}{F(V)}$ form an orthonormal basis of $T_x M$ with respect to $g_V$.

For $x_1, x_2 \in M$, the distance from $x_1$ to $x_2$ is defined by
\be
d_{F}(x_{1}, x_{2}):=\inf _{\gamma} \int_{0}^{1} F(\gamma (t), \dot{\gamma}(t)) d t,
\ee
where the infimum is taken over all $C^1$ curves $\gamma:[0,1] \rightarrow M$ such that $\gamma(0)=$ $x_1$ and $\gamma(1)=x_2$. Note that $d_{F} \left(x_1, x_2\right) \neq d_{F} \left(x_2, x_1\right)$ unless $F$ is reversible.  The diameter of $M$ is defined by
\be
{\rm diam}(M):=\sup _{x_{1}, x_{2} \in M} \{d_{F}(x_{1}, x_{2})\}.
\ee

Now we define the forward and backward geodesic balls of radius $R$ with center at $x_0\in M$ by
$$
B^{+}_{R}(x_0):=\{z\in M \ |\ d_{F}(x_0,z)<R\},\ \ \ B^{-}_{R}(x_0):=\{z \in M \ | \ d_{F}(z,x_0)<R\}.
$$
Sometimes, we will denote $B_{R}:=B^{+}_{R}(x_0)$ for some $x_{0}\in M$ for simplicity. Further, let $S_{R}(x_{0})$ be the forward geodesic sphere of radius $R$ at the center $x_{0}$.

A $C^{\infty}$-curve $\gamma:[0,1] \rightarrow M$ is called a geodesic  if $F(\gamma, \dot{\gamma})$ is constant and it is locally minimizing. The exponential map $\exp_x: T_x M \rightarrow M$ is defined by $\exp_x(v)=\gamma(1)$ for $v \in T_x M$ if there is a geodesic $\gamma:[0,1] \rightarrow M$ with $\gamma(0)=x$ and $\dot{\gamma}(0)=v$. A Finsler manifold $(M, F)$ is said to be forward complete (resp. backward complete) if each geodesic defined on $[0, \ell)$ (resp. $(-\ell, 0])$ can be extended to a geodesic defined on $[0, \infty)$ (resp. $(-\infty, 0])$. We say $(M, F)$ is complete if it is both forward complete and backward complete. By Hopf-Rinow theorem on forward complete Finsler manifolds, any two points in $M$ can be connected by a minimal forward geodesic and the forward closed balls $\overline{B_R^{+}(p)}$ are compact (see \cite{BaoChernShen, Shen1}).

\vskip 2mm

Let $(M, F, m)$ be an $n$-dimensional Finsler metric measure manifold with a smooth measure $m$. Write the volume form $dm$ of $m$ as $d m = \sigma(x) dx^{1} dx^{2} \cdots d x^{n}$. Define
\be\label{Dis}
\tau (x, y):=\ln \frac{\sqrt{{\rm det}\left(g_{i j}(x, y)\right)}}{\sigma(x)}.
\ee
We call $\tau$ the distortion of $F$. It is natural to study the rate of change of the distortion along geodesics. For a vector $y \in T_{x} M \backslash\{0\}$, let $\sigma=\sigma(t)$ be the geodesic with $\sigma(0)=x$ and $\dot{\sigma}(0)=y$.  Set
\be\label{S}
{\bf S}(x, y):= \frac{d}{d t}\left[\tau(\sigma(t), \dot{\sigma}(t))\right]|_{t=0}.
\ee
$\mathbf{S}$ is called the S-curvature of $F$ \cite{ChernShen, shen}.  We say that $\mathbf{S}\geq c$ for some $c\in \mathbb{R}$ if $\mathbf{S}(x,y)\geq c F(x, y)$ for all $x\in M$ and $y \in T_{x} M \backslash\{0\}$. In particular, let
\be
\vartheta_{0} := \sup\limits_{(x,y)\in TM\setminus \{0\}}\frac{|{\bf S}(x,y)|}{F(x,y)}. \label{supS}
\ee
Obviously, $\vartheta_{0}$ is finite when $M$ is compact. In this paper, we always assume that $\vartheta_{0}$ is finite.

Let $Y$ be a $C^{\infty}$ geodesic field on an open subset $U \subset M$ and $\hat{g}=g_{Y}.$  Let
\be
d m:=e^{-\psi} {\rm Vol}_{\hat{g}}, \ \ \ {\rm Vol}_{\hat{g}}= \sqrt{{det}\left(g_{i j}\left(x, Y_{x}\right)\right)}dx^{1} \cdots dx^{n}. \label{voldecom}
\ee
It is easy to see that $\psi$ is given by
\be
\psi(x)= \ln \frac{\sqrt{\operatorname{det}\left(g_{i j}\left(x, Y_{x}\right)\right)}}{\sigma(x)} =\tau\left(x, Y_{x}\right), \label{Psi}
\ee
which is just the distortion of $F$ along $Y_{x}$ at $x\in M$ \cite{ChernShen, Shen1}. Let $y := Y_{x}\in T_{x}M$ (that is, $Y$ is a geodesic extension of $y\in T_{x}M$). Then, by the definitions of the S-curvature, we have
\beqn
&&  {\bf S}(x, y)= Y[\tau(x, Y)]|_{x} = d \psi (y),  \\
&&  \dot{\bf S}(x, y)= Y[{\bf S}(x, Y)]|_{x} =y[Y(\psi)],
\eeqn
where $\dot{\bf S}(x, y):={\bf S}_{|m}(x, y)y^{m}$ and ``$|$" denotes the horizontal covariant derivative with respect to the Chern connection \cite{shen, Shen1}. Further, the weighted Ricci curvatures are defined as follows \cite{ChSh, Ohta1}
\beq
{\rm Ric}_{N}(y)&=& {\rm Ric}(y)+ \dot{\bf S}(x, y) -\frac{{\bf S}(x, y)^{2}}{N-n},   \label{weRicci3}\\
{\rm Ric}_{\infty}(y)&=& {\rm Ric}(y)+ \dot{\bf S}(x, y). \label{weRicciinf}
\eeq
We say that ${\rm Ric}_{N}\geq K$ for some $K\in \mathbb{R}$ if ${\rm Ric}_{N}(v)\geq K F^{2}(x, v)$ for all $x\in M$ and $v\in T_{x}M$, where $N\in \mathbb{R}\setminus \{n\}$ or $N= \infty$.

\vskip 2mm

For a point $x \in M$ and a unit vector $y \in S_{x} M:=\left\{v \in T_{x} M \mid F(x, v)=1\right\}$, the cut value $i_{y}$ of $y$ is defined by
$$
i_{y}:=\sup \left\{t>0 \mid d_{F}\left(x, \exp_{x}(ty)\right)=t\right\}.
$$
Further, we define the injectivity radius $i_x$ at $x$ by $i_{x}:=\inf\limits _{y \in S_{x} M} i_{y}$. Let
$$
\mathcal{D}_{x}:=\left\{\exp _{x}(t y) \mid 0 \leq t<i_{y}, \ y \in S_{x} M\right\} \subset M
$$
and
$$
{\rm Cut}_{x}:=M-\mathcal{D}_{x}.
$$
${\rm Cut}_x$ and $\mathcal{D}_x$ are called the cut-locus and the cut-domain of $x$ respectively. Let
$$
\mathfrak{D}_{x}:=\left\{t y \mid 0 \leq t<i_{y}, \ y \in S_{x} M\right\} \subset T_{x} M .
$$
$\mathfrak{D}_x$ is called the tangent cut-domain at $x$. The exponential map
$$
\exp_x: \mathfrak{D}_{x} \rightarrow \mathcal{D}_x
$$
is an onto diffeomorphism. The cut-locus ${\rm Cut}_{x}$ of $x$ always has null measure and $d_{x}:= d_{F}(x, \cdot)$ is $C^1$ outside the cut-locus of $x$ (see \cite{BaoChernShen,Shen1}).

\vskip 2mm

For any Finsler metric $F$ on $M$, its dual metric
\be
F^{*}(x, \xi):=\sup\limits_{y\in T_{x}M\setminus \{0\}} \frac{\xi (y)}{F(x,y)}, \ \ \forall\ \xi \in T^{*}_{x}M \label{co-Finsler}
\ee
is a Finsler co-metric on $M$.  Naturally, we define the Legendre transformation ${\cal L}: TM \rightarrow T^{*}M$  on Finsler manifold $(M,F)$ by
$$
{\cal L}(y):=\left\{
\begin{array}{ll}
g_{y}(y, \cdot), & y\neq 0, \\
0, & y=0.
\end{array} \right.
$$
${\cal L}$ is a norm-preserving transformation, that is,  $F(x,y)=F^{*}(x, {\cal L}(y))$.  Let
$$
g^{*kl}(x,\xi):=\frac{1}{2}\left[F^{*2}\right]_{\xi _{k}\xi_{l}}(x,\xi).
$$
Then, for any $\xi ={\cal L}(y)$, we have
\be
g^{*kl}(x,\xi)=g^{kl}(x,y), \label{Fdual}
\ee
where $\left(g^{kl}(x,y)\right)= \left(g_{kl}(x,y)\right)^{-1}$.

Given a smooth function $u$ on $M$, we define the gradient vector $\nabla u(x)$ of $u$ at $x \in M$ by $\nabla u(x):=\mathcal{L}^{-1}(d u(x)) \in T_x M$. In a local coordinate system, we can express $\nabla u$ as
\be \label{nabna}
\nabla u(x)= \begin{cases}g^{* i j}(x, d u) \frac{\partial u}{\partial x^i} \frac{\partial}{\partial x^j}, & x \in M_u, \\ 0, & x \in M \backslash M_u,\end{cases}
\ee
where $M_{u}:=\{x \in M \mid d u(x) \neq 0\}$ \cite{Shen1}. In general, $\nabla u$ is only continuous on $M$, but smooth on $M_{u}$.
The Hessian of $u$ is defined by using Chern connection as
$$
\nabla^2 u(X, Y)=g_{\nabla u}\left(D_X^{\nabla u} \nabla u, Y\right).
$$
One can show that $\nabla^2 u(X, Y)$ is symmetric \cite{Ohta3, WuXin}.

Let $W^{1, p}(M)(p \geq 1)$ be the space of functions $u \in L^p(M)$ with $\int_M[F(\nabla u)]^p d m+\int_M[\overleftarrow{F}(\overleftarrow{\nabla} u)]^p d m<\infty$ and $W_0^{1, p}(M)$ be the closure of $\mathcal{C}_0^{\infty}(M)$ under the (absolutely homogeneous) norm
\be
\|u\|_{W^{1, p}(M)}:=\|u\|_{L^p(M)}+\frac{1}{2}\|F(\nabla u)\|_{L^p(M)}+\frac{1}{2}\|\overleftarrow{F}(\overleftarrow{\nabla} u)\|_{L^p(M)},
\ee
where $\mathcal{C}_0^{\infty}(M)$ denotes the set of all smooth compactly supported functions on M and $\overleftarrow{\nabla} u$ is the gradient vector of $u$ with respect to the reverse metric $\overleftarrow{F}$. In fact, $\overleftarrow{F}(\overleftarrow{\nabla} u)=F(\nabla(-u))$.

Given a differentiable vector field $V$ on $M$, its  divergence  $\operatorname{div}_{m} V$  is defined in the weak form by following divergence formula
$$
\int_M \phi \operatorname{div}_m V d m=-\int_M d \phi(V) d m
$$
for all $\phi \in \mathcal{C}_0^{\infty}(M)$. Further, the Finsler Laplacian $\Delta u$ is defined by
\be
\Delta u:=\operatorname{div}_m(\nabla u). \label{Lap}
\ee
From (\ref{Lap}), Finsler Laplacian is a nonlinear elliptic differential operator of the second order.
Further, noticing that $\nabla u$ is weakly differentiable, the Finsler Laplacian should be understood in the weak sense, that is, for $u \in W^{1,2}(M)$, $\Delta u$ is defined by
\be
\int_M \phi \Delta u d m:=-\int_M d \phi(\nabla u) dm  \label{Lap1}
\ee
for $\phi \in \mathcal{C}_0^{\infty}(M)$ \cite{Ohta1,Shen1}.

Given a weakly differentiable function $u$ and a vector field $V$ which does not vanish on $M_u$, the weighted Laplacian of $u$ on the weighted Riemannian manifold $\left(M, g_V, m\right)$ is defined by
\be
\Delta^{V} u:= {\rm div}_{m}\left(\nabla^V u\right),
\ee
where
$$
\nabla^V u:= \begin{cases}g^{ij}(x, V) \frac{\partial u}{\partial x^i} \frac{\partial}{\partial x^j} & \text { for } x \in M_u, \\ 0 & \text { for } x \notin M_u .\end{cases}
$$
Similarly, the weighted Laplacian should be understood in the weak sense. We note that $\nabla^{\nabla u}u=\nabla u$ and $\Delta^{\nabla u} u=$ $\Delta u$. Moreover, it is easy to see that $\Delta u= {\rm tr}_{\nabla u} \nabla^2 u-{\bf S}(\nabla u)$ on $M_u$ \cite{Ohta1, WuXin}.

\section{Integral weighted Ricci curvature and Laplacian comparison theorems}\label{Laplacian}

Let $(M, F, m)$ be an $n$-dimensional Finsler manifold with a smooth measure $m$ and $x \in M$. Remember that $\mathfrak{D}_x=\left\{t y \mid 0 \leq t<i_{y}, \ y \in S_{x} M\right\}$ and $\mathcal{D}_{x}=\exp _{x}(\mathfrak{D}_{x})$. For any $z \in \mathcal{D}_x$, we can choose the geodesic polar coordinates $(r, \theta)$ centered at $x$ for $z$ such that $r(z)=F(x, v)$ and $\theta^\alpha(z)=\theta^\alpha\left(\frac{v}{F(v)}\right)$ for $1 \leq \alpha \leq n-1$, where $r(z)=d_{F}(x, z)$ and $v=\exp _x^{-1}(z) \in T_x M \backslash\{0\}$. It is well known that the distance function $r$ starting from $x \in M$ is smooth on $\mathcal{D}_x$ and $F(\nabla r)=1$ \cite{BaoChernShen,Shen1}. A basic fact is that the distance function $r=d_{F}(x, \cdot)$ satisfies the following \cite{BaoChernShen,Shen1,WuXin}
$$
\left.\nabla r\right|_z=\left.\frac{\partial}{\partial r}\right|_z .
$$

By Gauss's lemma, the unit radial coordinate vector $\frac{\partial}{\partial r}$ and the coordinate vectors $\frac{\partial}{\partial \theta^\alpha}$ for $1 \leq \alpha \leq n-1$ are mutually vertical with respect to $g_{\nabla r}$ (\cite{BaoChernShen}, Lemma 6.1.1). Therefore, we can simply write the volume form at $z=\exp _x(r \xi)$ with $v=r \xi$ as $\left.d m\right|_{\exp _x(r \xi)}=\sigma(x, r, \theta) d r d \theta$, where $\xi \in S_x M$. Then, for the forward geodesic ball $B_R=B_R^{+}(x)$ of radius $R$ at the center $x \in M$, the volume of $B_R$ is
$$
m\left(B_{R}\right)=\int_{B_{R}} d m=\int_{B_{R} \cap \mathcal{D}_{x}} d m=\int_{0}^{R} d r \int_{\mathcal{D}_x(r)} \sigma(x, r, \theta) d \theta = \int_{S_{x} M} \int_{0}^{\min \left\{R, i_{y}\right\}} \sigma(x, r, \theta) d r d \theta,
$$
where $\theta^{\alpha}=\theta^{\alpha}(y)$ for $y \in S_x M$ and $\mathcal{D}_x(r)=\left\{y \in S_x M \mid r y \in \mathfrak{D}_{x}\right\}$  \cite{Ohta1,Shen1}. Obviously, for any $0<s< t<R$, $\mathcal{D}_x(t) \subseteq \mathcal{D}_x(s)$. Besides, by the definition of Laplacian, we have \cite{Shen1,WuXin}
\be
\Delta r=\frac{\partial}{\partial r} \ln \sigma(x, r, \theta). \label{DelLa}
\ee

Let
$$
\underline{{\rm{Ric}}}_{\infty}(z):=\min _{y \in T_z M \backslash\{0\}} \frac{{\rm{Ric}}_{\infty}(z, y)}{F^2(z, y)}
$$
and ${\rm{Ric}}_{\infty}^K(z):=\max \left\{(n-1) K-\underline{{\rm{Ric}}}_{\infty}(z), 0\right\}$ for some $K \in \mathbb{R}$. Given $p \geq 1, \vartheta \geq 0$ and $R>0$, let
\[
\left\|{\rm{Ric}}_{\infty}^K\right\|_{p, R, \vartheta}(x):=\left(\int_0^R \int_{\mathcal{D}_x(r)}\left({\rm{Ric}}_{\infty}^K\right)^p \mathrm{e}^{-\vartheta r} \sigma(x, r, \theta) d r d \theta\right)^{\frac{1}{p}}.
\]
To remove the dependence on the volume of the geodesic ball, we further define
\be
\overline{\left\|{\rm{Ric}}_{\infty}^K\right\|}_{p, R, \vartheta}(x) :=\frac{\left\|{\rm{Ric}}_{\infty}^K\right\|_{p, R, \vartheta}(x)}{m\left(B_R^{+}(x)\right)^{\frac{1}{p}}}=\left(\frac{1}{m\left(B_R^{+}(x)\right)} \int_0^R \int_{\mathcal{D}_x(r)}\left({\rm{Ric}}_{\infty}^K\right)^p \mathrm{e}^{-\vartheta r} \sigma(x, r, \theta) d r d \theta\right)^{\frac{1}{p}} \label{iwRp}
\ee
and
\be
{\overline{\left\|{\rm{Ric}}_{\infty}^K\right\|}_{p, R, \vartheta}}:=\sup_{x \in M}{\overline{\left\|{\rm{Ric}}_{\infty}^K\right\|} _{p, R, \vartheta}}(x). \label{iwRM}
\ee
${\overline{\left\|{\rm{Ric}}_{\infty}^K\right\|}_{p, R, \vartheta}}$ measures how much the weighted Ricci curvature ${\rm{Ric}}_{\infty}$ lies below a given bound $(n-1) K$ in the $L^p$ sense. Clearly, $\overline{\left\|{\rm{Ric}}_{\infty}^K\right\|}_{p, R, \vartheta}=0$ iff ${\rm{Ric}}_{\infty} \geq (n-1) K$.

\begin{rem}\label{epsilon}
The norms $\left\|{\rm{Ric}}_{\infty}^K\right\|_{p, R, \vartheta}(x)$ and $\overline{\left\|{\rm{Ric}}_{\infty}^K\right\|}_{p, R, \vartheta}(x)$ are both non-decreasing in $K$. This follows directly from the fact that ${\rm{Ric}}_{\infty}^K(z)$ is non-decreasing in $K$.
\end{rem}

\begin{rem}
By (\ref{iwRp}) and (\ref{iwRM}), we can deduce that for $0<r\leq R$,
\beq\label{ric<}
\overline{\left\|{\rm{Ric}}_{\infty}^K\right\|}_{p, r, \vartheta} &\leq& \sup_{x\in M} \left(\frac{\left\|{\rm{Ric}}_{\infty}^K\right\|_{p, R, \vartheta}(x)} {m(B_{r}^{+}(x))^{\frac{1}{p}}}\right)
= \sup_{x\in M} \left(\frac{m(B_{R}^{+}(x))^{\frac{1}{p}}}{m(B_{r}^{+}(x))^{\frac{1}{p}}}\cdot\frac{\left\|{\rm{Ric}}_{\infty}^K\right\|_{p, R, \vartheta}(x)} {m(B_{R}^{+}(x))^{\frac{1}{p}}}\right)\nonumber\\
&\leq & \sup_{x\in M} \left(\frac{m(B_{R}^{+}(x))}{m(B_{r}^{+}(x))}\right)^{\frac{1}{p}} \overline{\left\|{\rm{Ric}}_{\infty}^K\right\|}_{p, R, \vartheta}.
\eeq
\end{rem}

Noticing that $\Delta r={\rm{tr}}_{\nabla r}\left(\nabla^2 r\right)-\mathbf{S}(\nabla r)$, let $h:={\rm{tr}}_{\nabla r}\left(\nabla^2 r\right)$ and $H_K(r):=(n-1) \frac{s_K^{\prime}(r)}{s_K(r)}$, where
$$
s_K(t):= \begin{cases}\frac{1}{\sqrt{K}} \sin (\sqrt{K} t), & K>0, \\ t, & K=0, \\ \frac{1}{\sqrt{-K}} \sinh (\sqrt{-K} t), & K<0.\end{cases}
$$
Let
$$
\varphi(r, \theta):= \begin{cases}\left(\Delta r-H_K(r)-\vartheta\right)_{+}, & 0 \leq r<i_x, \\ 0, & r \geq i_x,\end{cases}
$$
where $(\star)_{+}:=\max \{\star, 0\}$.
It is clear that, if $\Delta r \geq H_{K}+\vartheta$, then $\varphi=\Delta r-H_{K}(r)-\vartheta$. Otherwise, $\varphi=0$. Moreover, if $\mathbf{S}(\nabla r) \geq -\vartheta$, then
\be
\lim_{r \rightarrow 0^{+}} \varphi(r, \theta)=\lim _{r \rightarrow 0^{+}}\left(h-\mathbf{S}(\nabla r)-H_K(r)-\vartheta\right)_{+}=\lim _{r \rightarrow 0^{+}}(-\mathbf{S}(\nabla r)-\vartheta)_{+} = 0 \label{phia0}
\ee
because $h \sim \frac{n-1}{r}$ and $H_{K}(r) \sim \frac{n-1}{r}$ when $r \rightarrow 0^{+}$. Further,  by (5.1) in \cite{WuXin}, that is,
$$
\frac{d}{d r} {\rm{tr}}_{\nabla r}\left(\nabla^2 r\right)+\sum_{i, j}\left(\nabla^2 r\left(E_i, E_j\right)\right)^2=-{\rm{Ric}}(\nabla r),
$$
where $E_1, \cdots, E_{n-1}, E_n=\nabla r$ is the local $g_{\nabla r}$-orthonormal frame along the geodesic, Cheng-Feng \cite{ChF} obtained the following inequality (see (3.7) in \cite{ChF})
\beq
\frac{\partial}{\partial r} \varphi+\frac{\varphi^2}{n-1}+2 \frac{\varphi H_K}{n-1} \leq(n-1) K-{\rm{Ric}}_{\infty}(\nabla r) \leq {\rm{Ric}}_{\infty}^{K}. \label{phi}
\eeq
From (\ref{phi}), Cheng-Feng proved the following Laplacian comparison theorem in \cite{ChF}. It is important for the proof of our later theorems.

\begin{thm}{\rm(\cite{ChF}, Theorem 3.1)}\label{Lap1}
Let $(M, F, m)$ be an $n$-dimensional Finsler metric measure manifold. Assume that $\mathbf{S}\geq -\vartheta$ for some $\vartheta \geq 0$.
Then, for $p>\frac{n}{2}$, $K \in \mathbb{R}$ and $r>0$ $(r \leq \frac{\pi}{2 \sqrt{K}}$ when $K>0)$, we have
$$
\int_0^r \varphi^{2 p}(t, \theta) \mathrm{e}^{-\vartheta t} \sigma(t, \theta) d t \leq \left(\frac{(n-1)(2 p-1)}{2 p-n}\right)^p \int_0^r\left(\operatorname{Ric}_{\infty}^{K}\right)^{p} \mathrm{e}^{-\vartheta t} \sigma(t, \theta) d t
$$
and
\be
\varphi(r, \theta)^{2 p-1} \mathrm{e}^{-\vartheta r} \sigma(r, \theta) \leq (2 p-1)^p\left(\frac{n-1}{2 p-n}\right)^{p-1} \int_0^r\left(\operatorname{Ric}_{\infty}^K\right)^p \mathrm{e}^{-\vartheta t} \sigma(t, \theta) d t, \label{flapcom}
\ee
where $(t, \theta)$ denotes the geodesic polar coordinates centered at $x \in M$.
\end{thm}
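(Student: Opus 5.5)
This follows the Petersen--Wei argument, with the Riccati inequality (\ref{phi}) as the starting point. Write $\varphi=\varphi(t,\theta)$ and $\sigma=\sigma(t,\theta)$ along a fixed direction $\theta$, for $0<t<i_\theta$. Beyond the cut value $\varphi=0$, and $\sigma$ is not counted there, so those $t$ contribute nothing. By (\ref{DelLa}), $\partial_t\sigma=(\Delta r)\sigma$. On the set where $\varphi>0$ we have $\Delta r=\varphi+H_K+\vartheta$. Hence
\[
\frac{\partial}{\partial t}\big(\mathrm{e}^{-\vartheta t}\sigma\big)=(\Delta r-\vartheta)\mathrm{e}^{-\vartheta t}\sigma=(\varphi+H_K)\mathrm{e}^{-\vartheta t}\sigma .
\]
Where $\varphi=0$, the terms with positive powers of $\varphi$ vanish anyway. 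Multiply (\ref{phi}) by $(2p-1)\varphi^{2p-2}\mathrm{e}^{-\vartheta t}\sigma$. This gives
\[
\frac{\partial}{\partial t}\big(\varphi^{2p-1}\big)\mathrm{e}^{-\vartheta t}\sigma+\frac{2p-1}{n-1}\varphi^{2p}\mathrm{e}^{-\vartheta t}\sigma+\frac{2(2p-1)}{n-1}H_K\varphi^{2p-1}\mathrm{e}^{-\vartheta t}\sigma\le (2p-1)\mathrm{Ric}_\infty^K\varphi^{2p-2}\mathrm{e}^{-\vartheta t}\sigma .
\]

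Next integrate by parts on $[0,r]$. The identity is
\[
\int_0^r \partial_t(\varphi^{2p-1})\mathrm{e}^{-\vartheta t}\sigma\,dt=\varphi^{2p-1}\mathrm{e}^{-\vartheta r}\sigma(r)-\int_0^r\varphi^{2p-1}(\varphi+H_K)\mathrm{e}^{-\vartheta t}\sigma\,dt.
\]
The boundary term at $t=0$ vanishes: $\sigma\to0$ as $t\to 0$, and $\varphi\to0$ by (\ref{phia0}), which is where $\mathbf S\ge-\vartheta$ is used. Substituting, the inequality becomes
\[
\varphi^{2p-1}\mathrm{e}^{-\vartheta r}\sigma(r)+\Big(\frac{2p-1}{n-1}-1\Big)\int_0^r\varphi^{2p}\mathrm{e}^{-\vartheta t}\sigma+\Big(\frac{2(2p-1)}{n-1}-1\Big)\int_0^rH_K\varphi^{2p-1}\mathrm{e}^{-\vartheta t}\sigma\le(2p-1)\int_0^r\mathrm{Ric}_\infty^K\varphi^{2p-2}\mathrm{e}^{-\vartheta t}\sigma .
\]
Since $p>n/2$, we have $\frac{2p-1}{n-1}-1=\frac{2p-n}{n-1}>0$. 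The coefficient of the $H_K$ term is also positive.

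The $H_K$ term must be discarded, which needs $H_K\ge0$ on $[0,r]$. This holds for $K\le0$. For $K>0$ it holds exactly when $r\le\pi/(2\sqrt K)$, which explains that restriction in the statement. Dropping the boundary term as well leaves
\[
\frac{2p-n}{n-1}\int_0^r\varphi^{2p}\mathrm{e}^{-\vartheta t}\sigma\le(2p-1)\int_0^r\mathrm{Ric}_\infty^K\varphi^{2p-2}\mathrm{e}^{-\vartheta t}\sigma .
\]
Apply H\"older's inequality with exponents $p$ and $p/(p-1)$ to the right side, using the measure $\mathrm{e}^{-\vartheta t}\sigma\,dt$. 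This bounds it by $(2p-1)\big(\int(\mathrm{Ric}_\infty^K)^p\mathrm{e}^{-\vartheta t}\sigma\big)^{1/p}\big(\int\varphi^{2p}\mathrm{e}^{-\vartheta t}\sigma\big)^{1-1/p}$. Dividing by the last factor and raising to the power $p$ gives the first inequality with constant $\big(\frac{(n-1)(2p-1)}{2p-n}\big)^p$.

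For (\ref{flapcom}), keep the boundary term and drop the two nonnegative integrals on the left. Then
\[
\varphi^{2p-1}\mathrm{e}^{-\vartheta r}\sigma(r)\le(2p-1)\|\mathrm{Ric}_\infty^K\|_{L^p}\,\|\varphi\|_{L^{2p}}^{2p-2},
\]
with norms taken in the weighted measure on $[0,r]$. Inserting the first inequality for $\|\varphi\|^{2p}_{L^{2p}}$ yields
\[
(2p-1)\Big(\frac{(n-1)(2p-1)}{2p-n}\Big)^{p-1}\int_0^r(\mathrm{Ric}_\infty^K)^p\mathrm{e}^{-\vartheta t}\sigma\,dt=(2p-1)^p\Big(\frac{n-1}{2p-n}\Big)^{p-1}\int_0^r(\mathrm{Ric}_\infty^K)^p\mathrm{e}^{-\vartheta t}\sigma\,dt,
\]
which is (\ref{flapcom}).

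The main obstacle is regularity. The function $\varphi$ is only Lipschitz, since the positive part creates kinks, so the integration by parts must be justified. I would apply the product rule to the locally absolutely continuous function $\varphi^{2p-1}$, using that $2p-1>1$. I would also check that near $t=0$ the boundary term vanishes, by the asymptotics $h\sim (n-1)/t$ and $H_K\sim(n-1)/t$ together with $\sigma=O(t^{n-1})$.
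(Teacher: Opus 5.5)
Your proposal is correct and is essentially the paper's argument. The paper, following Cheng--Feng, proves Theorem~\ref{sin} by exactly this computation, using the weight $\phi=\sin^{4p-n-1}(\sqrt{K}t)$ to absorb the $\frac{4p-n-1}{n-1}H_K\varphi^{2p-1}$ term; the cited statement is the case $\phi\equiv 1$, where that term is discarded using $H_K\ge 0$ for $r\le \pi/(2\sqrt{K})$, and your integration by parts, H\"older step and constants all match that proof.
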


Actually, by the similar argument, we can easily get the following Laplacian comparison theorem for the case that $K>0$ and $0 < r <\frac{\pi}{\sqrt{K}}$.

\begin{thm}\label{sin}
Let $(M, F, m)$ be an $n$-dimensional Finsler metric measure manifold. Assume that $\mathbf{S}\geq -\vartheta$ for some $\vartheta \geq 0$. Then for $p>\frac{n}{2}$, $K>0$ and $0 < r <\frac{\pi}{\sqrt{K}}$, we have
\be
\sin ^{4 p-n-1}(\sqrt{K} r) \varphi(r, \theta)^{2 p-1} e^{-\vartheta r} \sigma(r, \theta) \leq (2 p-1)^{p}\left(\frac{n-1}{2 p-n}\right)^{p-1} \int_{0}^{r}\left({\rm{Ric}}_{\infty}^{K}\right)^{p} e^{-\vartheta t} \sigma(t, \theta) d t. \label{flapcom1}
\ee
\end{thm}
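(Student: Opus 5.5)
We start from the differential inequality (\ref{phi}),
$$
\varphi' + \frac{\varphi^2}{n-1} + \frac{2\varphi H_K}{n-1} \le {\rm Ric}_\infty^K ,
$$
valid along the radial geodesic wherever $\varphi>0$, with $H_K(r)=(n-1)\sqrt{K}\cot(\sqrt{K}r)$ for $K>0$. The proof of (\ref{flapcom}) in Theorem \ref{Lap1} multiplies this by $\varphi^{2p-2}e^{-\vartheta r}\sigma$ and integrates. It uses $(\ln\sigma)'=\Delta r = \varphi+H_K+\vartheta$ on the set $\{\varphi>0\}$ to absorb the derivative of $\sigma e^{-\vartheta r}$, and then drops the term involving $H_K$ because $H_K\ge 0$ when $r\le \pi/(2\sqrt K)$. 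For $\pi/(2\sqrt K)<r<\pi/\sqrt K$ we have $H_K<0$, so that term can no longer be dropped. The plan is to absorb it instead through the weight $\sin^{4p-n-1}(\sqrt K r)$.

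\emph{Derivative identity.} Set $\Phi(r):=\sin^{4p-n-1}(\sqrt K r)\,\varphi^{2p-1}e^{-\vartheta r}\sigma$. On $\{\varphi>0\}$,
$$
\Phi' = \Phi\Big[(4p-n-1)\tfrac{H_K}{n-1} + (2p-1)\tfrac{\varphi'}{\varphi} + \varphi + H_K\Big].
$$
Insert $\varphi'\le {\rm Ric}^K_\infty-\frac{\varphi^2}{n-1}-\frac{2\varphi H_K}{n-1}$. The coefficient of $H_K$ becomes
$$
\frac{4p-n-1}{n-1}+1-\frac{2(2p-1)}{n-1}=0,
$$
and this cancellation is exactly why the exponent is $4p-n-1$. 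What remains is
$$
\Phi' \le \sin^{4p-n-1}e^{-\vartheta r}\sigma\Big[(2p-1)\varphi^{2p-2}{\rm Ric}^K_\infty-\tfrac{2p-n}{n-1}\varphi^{2p}\Big].
$$
On the set where $\varphi=0$, the function $\Phi$ vanishes and is locally constant in the relevant (Lipschitz) sense, so the inequality holds there too.

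\emph{Removing the curvature cross-term.} Apply Young's inequality
$$
(2p-1)\varphi^{2p-2}{\rm Ric}^K_\infty \le \frac{2p-n}{n-1}\varphi^{2p}+C\,({\rm Ric}^K_\infty)^p,
$$
with exponents $p/(p-1)$ and $p$. This gives $C=(2p-1)^p\big(\frac{n-1}{2p-n}\big)^{p-1}$, matching the constant in (\ref{flapcom}), and yields
$$
\Phi'\le C\sin^{4p-n-1}(\sqrt K r)\,({\rm Ric}^K_\infty)^p e^{-\vartheta r}\sigma \le C({\rm Ric}^K_\infty)^p e^{-\vartheta r}\sigma .
$$
The second inequality uses $4p-n-1>-1$ and $\sin\le1$. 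Strictly, it needs $4p-n-1\ge0$, which holds because $p>n/2$ gives $4p-n-1>n-1\ge0$.

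\emph{Integration and the main obstacle.} Integrate from $0$ to $r$. The boundary value $\Phi(0^+)=0$: by (\ref{phia0}) the factor $\varphi$ stays bounded near $0$, and $\sigma\to0$ as $r\to0$. This gives (\ref{flapcom1}). The main obstacle is the regularity bookkeeping: $\varphi$ is only Lipschitz because of the positive part, and beyond the cut point $\varphi$ is set to $0$. As in Cheng--Feng's argument, we handle this by working only for $r<i_y$, where everything is smooth away from the set $\{\varphi=0\}$. On that set $\Phi=0$ and $\Phi\ge0$ elsewhere, so integrating $\Phi'$ over the intervals where $\varphi>0$ still produces the bound. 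For $r\ge i_y$ the left-hand side is zero and the statement is trivial.
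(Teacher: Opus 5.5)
Your argument is correct and is essentially the paper's proof. It uses the same weight $\sin^{4p-n-1}(\sqrt{K}t)$, whose logarithmic derivative $\frac{4p-n-1}{n-1}H_K$ exactly cancels the $H_K$ terms, and then integrates from $0$ using (\ref{phia0}) together with $\sin^{4p-n-1}\le 1$. The only difference is how the cross term $(2p-1)\varphi^{2p-2}{\rm Ric}_\infty^K$ is absorbed: you do it pointwise by Young's inequality before integrating, whereas the paper integrates first and then combines H\"older's inequality with the integrated bound (\ref{phi2}). Your version is slightly shorter and gives a marginally better constant. One small correction: the sharp Young constant is actually $\frac{(p-1)^{p-1}}{p^p}(2p-1)^p\left(\frac{n-1}{2p-n}\right)^{p-1}$, not the stated one, so the claimed constant follows \emph{a fortiori}.
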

\begin{proof}
The proof mainly follows the argument of Theorem 3.1 in \cite{ChF}. For the reader's convenience, we give a brief proof of the theorem. Let $\phi(t)=\sin^{4p-n-1}(\sqrt{K}t)$. Through a simple calculation, we have
\beq\label{phi0}
\frac{\partial \phi}{\partial t}=\frac{4p-n-1}{n-1}\phi H_{K}(t).
\eeq
Then,  from (\ref{DelLa}), (\ref{phi}) and (\ref{phi0}), we get
$$
\frac{\partial}{\partial t}\left(\phi\varphi^{2p-1}e^{-\vartheta t}\sigma\right)
\leq (2p-1)\phi \varphi^{2p-2} e^{-\vartheta t} \sigma {\rm{Ric}}^{K}_{\infty}-\left(\frac{2p-n}{n-1}\right)\phi\varphi^{2p}e^{-\vartheta t}\sigma.
$$
Integrating it with respect to $t$ from $0$ to $r$ and using (\ref{phia0}) yield
$$
\phi \varphi^{2p-1}e^{-\vartheta r} \sigma(r,\theta)\leq (2p-1)\int^{r}_{0}\phi\varphi^{2p-2} e^{-\vartheta t}  {\rm{Ric}}^{K}_{\infty} \cdot \sigma(t,\theta) dt - \left(\frac{2p-n}{n-1}\right)\int^{r}_{0} \phi\varphi^{2p}e^{-\vartheta t}\sigma(t,\theta) dt,
$$
which implies
\beq\label{phi1}
\phi \varphi^{2p-1} e^{-\vartheta r}\sigma(r,\theta) \leq (2p-1)\int^{r}_{0}\phi \varphi^{2p-2}e^{-\vartheta t} {\rm{Ric}}^{K}_{\infty} \cdot  \sigma(t,\theta) dt
\eeq
and
\beq\label{phi2}
\frac{2p-n}{n-1}\int^{r}_{0}\phi \varphi^{2p}e^{-\vartheta t}\sigma(t,\theta) dt \leq (2p-1)\int^{r}_{0}\phi \varphi^{2p-2} e^{-\vartheta t} {\rm{Ric}}^{K}_{\infty} \cdot  \sigma(t,\theta) dt.
\eeq
By H\"{o}lder inequality and (\ref{phi2}), we obtain the following
\beqn
&& \int_{0}^{r} \phi \varphi^{2 p-2} e^{-\vartheta t}  {\rm{Ric}}_{\infty}^{K} \cdot \sigma(t,\theta) dt
\leq \left[\int_{0}^{r} \phi \varphi^{2p} e^{-\vartheta t} \sigma(t,\theta) d t \right]^{\frac{p-1}{p}} \left[\int_{0}^{r} \phi e^{-\vartheta t} \left({\rm{Ric}}_{\infty}^{K}\right)^{p} \cdot \sigma(t,\theta) dt \right]^{\frac{1}{p}} \\
&& \leq \left[\frac{(2 p-1)(n-1)}{2 p-n}\int_{0}^{r} \phi \varphi^{2 p-2} e^{-\vartheta t}  {\rm{Ric}}_{\infty}^{K} \cdot \sigma(t,\theta) dt\right]^{\frac{p-1}{p}}\left[\int_{0}^{r} \phi e^{-\vartheta t} \left({\rm{Ric}}_{\infty}^{K}\right)^{p} \cdot \sigma(t,\theta)d t\right]^{\frac{1}{p}}.
\eeqn
Then we get
\beq
&& \int^{r}_{0}\phi \varphi^{2 p-2}(t, \theta) e^{-\vartheta t}  {\rm{Ric}}_{\infty}^{K} \cdot \sigma(t, \theta) dt \nonumber\\
&& \leq  \left(\frac{(2 p-1)(n-1)}{2 p-n}\right)^{p-1}\int^{r}_{0}\sin^{4p-n-1}(\sqrt{K}t) \left({\rm{Ric}}_{\infty}^{K}\right)^{p} e^{-\vartheta t}  \sigma(t,\theta) dt \nonumber\\
&& \leq  \left(\frac{(2 p-1)(n-1)}{2 p-n}\right)^{p-1}\int^{r}_{0}  \left({\rm{Ric}}_{\infty}^{K}\right)^{p} e^{-\vartheta t}  \sigma(t,\theta) dt.  \label{phi3}
\eeq
By substituting (\ref{phi3}) into (\ref{phi1}), we obtain
\[
\sin ^{4 p-n-1}(\sqrt{K} r) \varphi^{2 p-1}(r, \theta) e^{-\vartheta r} \sigma(r, \theta)\leq (2 p-1)^{p}\left(\frac{n-1}{2 p-n}\right)^{p-1} \int_{0}^{r} \left({\rm{Ric}}_{\infty}^{K}\right)^{p} e^{-\vartheta t} \sigma(t, \theta) d t.
\]
\end{proof}
\vskip 2mm

It should be pointed out that, when $K>0$ and $0< r \leq \frac{\pi}{2 \sqrt{K}}$, (\ref{flapcom}) is sharper than (\ref{flapcom1}).

\section{Volume comparison theorems}\label{volcom}

In this section, we will give some new volume comparison theorems on Finsler metric measure manifolds, which are important for our discussions in Section \ref{bon-May}.  For the sake of convenience, we set $A(r)=A(x_{0},r):=\int_{\mathcal{D}_{x_{0}}(r)}\sigma(r,\theta)d\theta$ for any given $x_{0}\in M$, where $(r, \theta)$ denotes the geodesic polar coordinates centered at $x_{0} \in M$. Actually, $A(r)= m\left(S_{r}(x_{0})\right)$. Thus the following theorem can be regarded as a relative volume comparison theorem.

\begin{thm}\label{Svol}
Let $(M, F, m)$ be an $n$-dimensional Finsler metric measure manifold. Assume that $\mathbf{S}\geq -\vartheta$ for some $\vartheta \geq 0$.
Then for $p>\frac{n}{2}$ and $K>0$, we have the following inequalities.
\ben
\item[{\rm (1)}] If $0< r\leq R\leq \frac{\pi}{2 \sqrt{K}}$, then
\beq
& &\left[\frac{A(R)}{e^{\vartheta R} \sin^{n-1}(\sqrt{K} R)}\right]^{\frac{1}{2p-1}} -\left[\frac{A(r)}{e^{\vartheta r} \sin^{n-1}(\sqrt{K} r)}\right]^{\frac{1}{2p-1}} \nonumber\\
&&  \leq \left(\frac{n-1}{(2p-1)(2p-n)}\right)^{\frac{p-1}{2 p-1}}\left(\left\|{\rm Ric}_{\infty}^{K}\right\|_{p, R, \vartheta}(x_{0})\right)^{\frac{p}{2p-1}} \int_{r}^{R}\left(\frac{1}{\sin (\sqrt{K} s)}\right)^{\frac{n-1}{2p-1}} ds. \label{revolcom1}
\eeq

\item[{\rm (2)}] If $\frac{\pi}{2 \sqrt{K}}<r \leq R <\frac{\pi}{\sqrt{K}}$, then
\beq
&& \left[\frac{A(R)}{e^{\vartheta R} \sin^{n-1}(\sqrt{K} R)}\right]^{\frac{1}{2p-1}}-\left[\frac{A(r)}{e^{\vartheta r} \sin^{n-1}(\sqrt{K} r)}\right]^{\frac{1}{2p-1}}\nonumber\\
&& \leq  \left(\frac{n-1}{(2 p-1)(2 p-n)}\right)^{\frac{p-1}{2p-1}}\left(\left\|{\rm Ric}_{\infty}^{K}\right\|_{p, R, \vartheta}(x_{0})\right)^{\frac{p}{2p-1}} \int_{r}^{R} \frac{1}{\sin^{2}(\sqrt{K} s)} ds. \label{area2}
\eeq
\een
\end{thm}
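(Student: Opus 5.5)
The plan is the Petersen--Wei argument: differentiate the normalized area function along the radial direction, bound the derivative pointwise by $\varphi$, apply H\"older's inequality in $\theta$, and then feed in the Laplacian comparisons (Theorem \ref{Lap1} for part (1), Theorem \ref{sin} for part (2)). Fix $x_0$ and work in geodesic polar coordinates $(t,\theta)$ centered at $x_0$. Put
$$
f(t,\theta):=\frac{e^{-\vartheta t}\sigma(t,\theta)}{\sin^{n-1}(\sqrt K t)}, \qquad g(t):=\int_{\mathcal D_{x_0}(t)} f(t,\theta)\,d\theta=\frac{A(t)}{e^{\vartheta t}\sin^{n-1}(\sqrt K t)} .
$$

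\emph{Step 1: the derivative of $f$.} Since $\frac{d}{dt}\ln \sin^{n-1}(\sqrt K t)=H_K(t)$, identity (\ref{DelLa}) gives
$$
\partial_t f=(\Delta r-\vartheta-H_K)f\le \varphi f
$$
for $\theta\in\mathcal D_{x_0}(t)$. The sets $\mathcal D_{x_0}(t)$ shrink as $t$ grows and $f\ge0$, so crossing the cut locus only loses mass. Hence, in the distributional sense (equivalently, after integrating from $s_1$ to $s_2$),
$$
g'(t)\le \int_{\mathcal D_{x_0}(t)}\varphi f\,d\theta .
$$
I expect this to be the main technical obstacle: $g$ is not differentiable in the usual sense because of the cut locus. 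I would handle it by extending $f$ by zero beyond $i_\theta$, integrating first in $t$ and then in $\theta$, and using that the indicator $\chi_{\{t<i_\theta\}}$ is nonincreasing in $t$.

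\emph{Step 2: H\"older's inequality with exponents $2p-1$ and $\frac{2p-1}{2p-2}$.} Applying it in $\theta$ gives
$$
\int\varphi e^{-\vartheta t}\sigma\,d\theta\le\Big(\int\varphi^{2p-1}e^{-\vartheta t}\sigma\,d\theta\Big)^{\frac1{2p-1}}\big(e^{-\vartheta t}A(t)\big)^{\frac{2p-2}{2p-1}} .
$$
Therefore
$$
\frac{d}{dt}\,g^{\frac1{2p-1}}=\frac{1}{2p-1}\,g^{-\frac{2p-2}{2p-1}}\,g'\le\frac1{2p-1}\,\sin^{-\frac{n-1}{2p-1}}(\sqrt K t)\Big(\int_{\mathcal D_{x_0}(t)}\varphi^{2p-1}e^{-\vartheta t}\sigma\,d\theta\Big)^{\frac1{2p-1}} ,
$$
because the powers of $e^{-\vartheta t}A(t)$ cancel and $\sin^{n-1}$ survives only to the power $\frac{n-1}{2p-1}$. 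If $g$ vanishes somewhere, I would replace $g$ by $g+\epsilon$ and let $\epsilon\to0$.

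\emph{Step 3, part (1).} For $t\le R\le\frac{\pi}{2\sqrt K}$, integrate (\ref{flapcom}) over $\theta\in\mathcal D_{x_0}(t)\subseteq\mathcal D_{x_0}(s)$ for $s\le t$. This bounds $\int\varphi^{2p-1}e^{-\vartheta t}\sigma\,d\theta$ by
$$
(2p-1)^p\Big(\frac{n-1}{2p-n}\Big)^{p-1}\|{\rm Ric}^K_\infty\|^p_{p,R,\vartheta}(x_0).
$$
The constants then combine as
$$
(2p-1)^{\frac p{2p-1}-1}\Big(\frac{n-1}{2p-n}\Big)^{\frac{p-1}{2p-1}}=\Big(\frac{n-1}{(2p-1)(2p-n)}\Big)^{\frac{p-1}{2p-1}} .
$$
Integrating the differential inequality from $r$ to $R$ yields (\ref{revolcom1}).

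\emph{Step 3, part (2).} Here $t\in(\frac{\pi}{2\sqrt K},\frac{\pi}{\sqrt K})$, and I use (\ref{flapcom1}) instead. It contributes an extra factor $\sin^{-\frac{4p-n-1}{2p-1}}(\sqrt K t)$. Combined with $\sin^{-\frac{n-1}{2p-1}}$, the total power is $\sin^{-\frac{4p-2}{2p-1}}=\sin^{-2}$, and the constant is the same as in part (1). Integrating from $r$ to $R$ gives (\ref{area2}).
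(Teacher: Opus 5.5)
Your proposal is correct and follows essentially the same route as the paper: differentiate the normalized area, bound the derivative by $\varphi$, apply H\"older with exponent $2p-1$ in $\theta$, and then insert (\ref{flapcom}) for part (1) and (\ref{flapcom1}) for part (2), with the same bookkeeping of constants and powers of $\sin(\sqrt{K}s)$. The only difference is how the cut locus is handled. Instead of your moving domain $\mathcal{D}_{x_0}(t)$ and a distributional chain rule for $g^{1/(2p-1)}$, the paper freezes the angular domain and works with $f(s)=\int_{\mathcal{D}_{x_0}(R)}\sigma(s,\theta)\,d\theta$, which is genuinely differentiable on $(0,R]$; the cut locus then enters only at the endpoints, through $A(R)=f(R)$ and $A(r)\ge f(r)$, which disposes of the technical obstacle you flagged.
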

\begin{proof}
Let function $f(s):=\int_{\mathcal{D}_{x_{0}}(R)} \sigma(s, \theta) d \theta$. It is obvious that $f(s)$ is differentiable on $(0, R]$. By the definition of $H_{K}(r)$, when $K>0$, we have
$$
H_{K}(r)=(n-1) \sqrt{K} \frac{\cos (\sqrt{K} r)}{\sin (\sqrt{K} r)}.
$$
From (\ref{DelLa}), we have $\frac{\partial}{\partial r} \sigma(r, \theta)=\sigma(r, \theta) \Delta r$. Noticing that $\frac{\partial}{\partial r}\left(e^{\vartheta r} s_{K}^{n-1}(r)\right)=e^{\vartheta r} s_{K}^{n-1}(r)\left(H_{K}(r)+\vartheta\right)$, for any constant $\alpha >0$, we can get
\beqn
\frac{d}{d s}\left[\frac{f(s)}{e^{\vartheta s} s_{K}^{n-1}(s)}\right]^{\alpha}& =& \alpha\left[\frac{f(s)}{e^{\vartheta s} s_{K}^{n-1}(s)}\right]^{\alpha-1}\left[\frac{f^{\prime}(s)-f(s)\left(H_{K}(s)+\vartheta\right)}{e^{\vartheta s} s_{K}^{n-1}(s)}\right]\\
& =& \alpha\left[\frac{f(s)}{e^{\vartheta s} s_{K}^{n-1}(s)}\right]^{\alpha-1}\left[\frac{\int_{\mathcal{D}_{x_{0}}(R)}{\sigma(s, \theta)\left(\Delta s-H_{K}(s)-\vartheta\right) d \theta}}{e^{\vartheta s} s_{K}^{n-1}(s)}\right] \\
& \leq & \alpha \frac{(f(s))^{\alpha-1}}{\left(e^{\vartheta s} s_{K}^{n-1}(s)\right)^{\alpha}}\int_{\mathcal{D}_{x_{0}}(R)} \varphi(s, \theta) \sigma(s, \theta) d \theta.
\eeqn
For any $0<r \leq R$, we know that $\mathcal{D}_{x_{0}}(R) \subseteq \mathcal{D}_{x_{0}}(r)$. Integrating both sides of above inequality in $s$ from $r$ to $R$ yields
\beq
&& \left[\frac{A(R)}{e^{\vartheta R} s_{K}^{n-1}(R)}\right]^{\alpha}-\left[\frac{A(r)}{e^{\vartheta r} s_{K}^{n-1}(r)}\right]^{\alpha}
\leq  \left[\frac{f(R)}{e^{\vartheta R} s_{K}^{n-1}(R)}\right]^{\alpha}-\left[\frac{f(r)}{e^{\vartheta r} s_{K}^{n-1}(r)}\right]^{\alpha}\nonumber \\
&& =  \int_{r}^{R} \frac{d}{d s}\left[\frac{f(s)}{e^{\vartheta s} s_{K}^{n-1}(s)}\right]^{\alpha} d s \leq \alpha \int_{r}^{R} \frac{(f(s))^{\alpha-1}}{\left(e^{\vartheta s} s_{K}^{n-1}(s)\right)^{\alpha}} \int_{\mathcal{D}_{x_{0}}(R)}  \varphi(s, \theta)  \sigma(s, \theta) d\theta ds,\label{Af1}
\eeq
where we have used the facts that $A(R)=f(R)$ and $A(r)\geq f(r)$. Furthermore, by H\"{o}lder's inequality, we get
\beq
\int_{\mathcal{D}_{x_{0}}(R)}  \varphi(s, \theta) \sigma(s, \theta) d\theta &\leq & \left(\int_{\mathcal{D}_{x_{0}}(R)}\sigma(s,\theta) d\theta\right)^{\frac{2p-2}{2p-1}}\left(\int_{\mathcal{D}_{x_{0}}(R)}  \varphi^{2p-1}(s, \theta) \sigma(s, \theta) d\theta\right)^{\frac{1}{2 p-1}}\nonumber \\
& = & [f(s)]^{\frac{2 p-2}{2 p-1}}\left(\int_{\mathcal{D}_{x_{0}}(R)}  \varphi^{2p-1}(s, \theta) \sigma(s, \theta) d\theta\right)^{\frac{1}{2 p-1}}. \label{Af2}
\eeq
Substituting (\ref{Af2}) into (\ref{Af1}) and taking $\alpha=\frac{1}{2 p-1}$, when $K>0$, we obtain
\beq
&& \left[\frac{A(R)}{e^{\vartheta R} \sin^{n-1} (\sqrt{K} R)}\right]^{\frac{1}{2 p-1}}-\left[\frac{A(r)}{e^{\vartheta r} \sin^{n-1} (\sqrt{K} r)}\right]^{\frac{1}{2 p-1}}\nonumber\\
&& \leq \frac{1}{2 p-1} \int_{r}^{R} \left(\frac{1}{e^{\vartheta s} \sin^{n-1} (\sqrt{K} s)} \int_{\mathcal{D}_{x_{0}}(R)}  \varphi^{2p-1}(s, \theta) \sigma(s, \theta) d\theta\right)^{\frac{1}{2 p-1}} ds. \label{Af3}
\eeq

(1) When $0<r \leq R\leq \frac{\pi}{2 \sqrt{K}}$, combining (\ref{Af3}) with  (\ref{flapcom}), we can get
\beqn
&& \left[\frac{A(R)}{e^{\vartheta R} \sin^{n-1} (\sqrt{K} R)}\right]^{\frac{1}{2 p-1}}-\left[\frac{A(r)}{e^{\vartheta r} \sin^{n-1} (\sqrt{K} r)}\right]^{\frac{1}{2 p-1}}\\
&& \leq \frac{1}{2 p-1} \int_{r}^{R} \frac{1}{\sin^{\frac{n-1}{2 p-1}} (\sqrt{K} s)} \left(\int_{\mathcal{D}_{x_{0}}(R)} e^{-\vartheta s}  \varphi^{2p-1}(s, \theta) \sigma(s, \theta) d\theta\right)^{\frac{1}{2 p-1}} d s\\
&& \leq  \left(\frac{n-1}{(2p-1)(2p-n)}\right)^{\frac{p-1}{2p-1}} \int_{r}^{R} \frac{1}{\sin^{\frac{n-1}{2 p-1}} (\sqrt{K} s)} \left(\int_{\mathcal{D}_{x_{0}}(R)} \int_{0}^{s}\left({\rm Ric}_{\infty}^{K}\right)^{p} e^{-\vartheta t} \sigma(t, \theta) d t d\theta\right)^{\frac{1}{2 p-1}} d s\\
&& \leq \left(\frac{n-1}{(2p-1)(2p-n)}\right)^{\frac{p-1}{2p-1}}\left\|{\rm{Ric}}_{\infty}^{K}\right\|_{p, R, \vartheta}^{\frac{p}{2p-1}}(x_{0})\int_{r}^{R} \left(\frac{1}{\sin(\sqrt{K} s)}\right)^{\frac{n-1}{2p-1}} d s,
\eeqn
which is just (\ref{revolcom1}).

(2) When $\frac{\pi}{2 \sqrt{K}}<r \leq R<\frac{\pi}{\sqrt{K}}$, combining (\ref{Af3}) with  (\ref{flapcom1}), we have
\beqn
&& \left[\frac{A(R)}{e^{\vartheta R} \sin^{n-1} (\sqrt{K} R)}\right]^{\frac{1}{2 p-1}}-\left[\frac{A(r)}{e^{\vartheta r} \sin^{n-1} (\sqrt{K} r)}\right]^{\frac{1}{2 p-1}}\\
&& \leq \frac{1}{2 p-1} \int_{r}^{R} \frac{1}{\sin^{2} (\sqrt{K} s)} \left(\int_{\mathcal{D}_{x_{0}}(R)}\sin^{4p-n-1} (\sqrt{K} s) e^{-\vartheta s}  \varphi^{2p-1}(s, \theta) \sigma(s, \theta) d\theta\right)^{\frac{1}{2 p-1}} d s\\
&& \leq \left(\frac{n-1}{(2p-1)(2p-n)}\right)^{\frac{p-1}{2p-1}} \int_{r}^{R} \frac{1}{\sin^{2} (\sqrt{K} s)} \left(\int_{\mathcal{D}_{x_{0}}(R)} \int_{0}^{s}\left({\rm{Ric}}_{\infty}^{K}\right)^{p} e^{-\vartheta t} \sigma(t, \theta) d t d\theta\right)^{\frac{1}{2 p-1}} d s\\
&& \leq \left(\frac{n-1}{(2p-1)(2p-n)}\right)^{\frac{p-1}{2p-1}}\left\|{\rm{Ric}}_{\infty}^{K}\right\|_{p, R, \vartheta}^{\frac{p}{2p-1}}(x_{0})\int_{r}^{R} \frac{1}{\sin^{2}(\sqrt{K} s)} d s.
\eeqn
This completes the proof.
\end{proof}
\vskip 2mm

As an application of Theorem \ref{Svol}, we derive an upper bound for $A(r)$ under the integral weighted Ricci curvature bound.

\begin{lem}\label{S(r)}
Let $(M, F, m)$ be an $n$-dimensional $(n \geq 2)$ Finsler metric measure manifold. Assume that $\mathbf{S}\geq -\vartheta$ for some $\vartheta \geq 0$.
Further, for $p>\frac{n}{2}$ and  $R_{0}>0$, assume that $\overline{\left\|{\rm{Ric}}_{\infty}^{\Lambda_{F}^2}\right\|}_{p, R_{0}, \vartheta}(x_{0})\leq \left(\frac{\pi}{6}\right)^{2-\frac{1}{p}}$. Then, for any $r \in \left[\frac{\pi}{\Lambda_{F}}, \frac{R_0}{\Lambda_{F}}\right]$, there exists a positive constant $C_{1}=C_{1}(n,p,\Lambda_{F},R_{0})$, such that
\beq
A(r) &\leq &  C_{1} e^{\vartheta r} \overline{\left\|{\rm{Ric}}_{\infty}^{\Lambda_{F}^2}\right\|}_{p, R_{0}, \vartheta}^{\frac{(n-1)p}{2p-1}}(x_{0}) m\left(B_{R_{0}}^{+}(x_{0})\right)\nonumber\\
&\leq &  C_{1} e^{\vartheta r} \left(\frac{\pi}{6}\right)^{n-1} m\left(B_{R_{0}}^{+}(x_{0})\right), \label{lem4.2}
\eeq
where
$
C_{1}(n,p,\Lambda_{F},R_{0}):=\frac{3\cdot 2^{2p+n-3}\Lambda_{F}}{\pi} +\frac{\pi^{2p-1}}{4^{p}}\left(\frac{n-1}{(2p-1)(2 p-n)}\right)^{p-1} \Lambda_{F}^{-(2p-1)}R_{0}^{2p-1}.
$
\end{lem}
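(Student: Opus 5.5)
The plan is to apply Theorem \ref{Svol}(1) with a curvature parameter $K$ that depends on $r$, rather than with $K=\Lambda_F^2$. The reason is that $r\ge \pi/\Lambda_F$ already lies outside $(0,\pi/\Lambda_F)$, the range where $\sin(\Lambda_F s)$ is usable. Write $\varepsilon:=\overline{\left\|{\rm Ric}_{\infty}^{\Lambda_F^2}\right\|}_{p,R_0,\vartheta}(x_0)\le (\pi/6)^{2-\frac1p}<1$. Fix $r\in[\pi/\Lambda_F,R_0/\Lambda_F]$ and choose $K>0$ with $r\le \frac{\pi}{2\sqrt K}$ and $K\le\Lambda_F^2$; for instance $\sqrt K$ comparable to $\pi/(2r)$, possibly shrunk by a power of $\varepsilon$ to be fixed at the balancing step. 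Remark \ref{epsilon} then gives $\|{\rm Ric}_\infty^K\|_{p,r,\vartheta}(x_0)\le \|{\rm Ric}_\infty^{\Lambda_F^2}\|_{p,R_0,\vartheta}(x_0)=\varepsilon\, m(B^+_{R_0}(x_0))^{1/p}$, since $r\le R_0/\Lambda_F\le R_0$.

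Apply (\ref{revolcom1}) on $[t,r]$ with $0<t\le r\le \frac{\pi}{2\sqrt K}$. Raise it to the power $2p-1$ using $(a+b)^{2p-1}\le 2^{2p-2}(a^{2p-1}+b^{2p-1})$, and use $\sin(\sqrt K r)\le 1$. This gives
\[
A(r)\le 2^{2p-2}e^{\vartheta r}\Big[\frac{A(t)}{\sin^{n-1}(\sqrt K t)}+\Big(\tfrac{n-1}{(2p-1)(2p-n)}\Big)^{p-1}\varepsilon^p\, m(B^+_{R_0}(x_0))\Big(\int_t^r \sin^{-\frac{n-1}{2p-1}}(\sqrt K s)\,ds\Big)^{2p-1}\Big],
\]
where I dropped $e^{-\vartheta t}\le 1$. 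In the last integral I use Jordan's inequality $\sin(\sqrt K s)\ge \frac{2\sqrt K s}{\pi}$ on $[0,\frac{\pi}{2\sqrt K}]$. Since $\frac{n-1}{2p-1}<1$, the integral converges at $0$ and is bounded by a multiple of $r$. Raising to $2p-1$ produces a factor like $r^{2p-1}\le R_0^{2p-1}\Lambda_F^{-(2p-1)}$, together with a $\pi^{2p-1}/4^p$-type constant. This accounts for the second part of $C_1$. The same estimate also shows that $\varepsilon^p\le\varepsilon^{\frac{(n-1)p}{2p-1}}$ because $\varepsilon<1$ and $\frac{n-1}{2p-1}<1$.

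To convert $A(t)$ into a volume, I will average the inequality over $t$ in an interval $[a,2a]\subset(0,r]$. The bound $\int_a^{2a}A(t)\,dt\le m(B^+_{R_0}(x_0))$ holds because $2a\le r\le R_0$. On $[a,2a]$ Jordan's inequality gives $\sin^{n-1}(\sqrt K t)\ge (2\sqrt K a/\pi)^{n-1}$. The first term is then at most $\frac{1}{a}\big(\frac{\pi}{2\sqrt K a}\big)^{n-1} m(B^+_{R_0}(x_0))$, and with $a$ of order $\pi/(3\Lambda_F)$ this reproduces the factor $\frac{3\Lambda_F}{\pi}2^{2p+n-3}$ in $C_1$.

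The main obstacle is making this first term carry the factor $\varepsilon^{\frac{(n-1)p}{2p-1}}$. The exponent $n-1$ on $\varepsilon^{p/(2p-1)}$ indicates that the ratio $\sqrt K a$ must be tied to $\varepsilon^{-p/(2p-1)}$. That is, $K$, the averaging window, or both have to be chosen as explicit powers of $\varepsilon$ so that $\big(\pi/(2\sqrt K a)\big)^{n-1}\approx \varepsilon^{\frac{(n-1)p}{2p-1}}$. The smallness hypothesis $\varepsilon\le(\pi/6)^{2-1/p}$ should be exactly what keeps all these parameter choices admissible, namely $2a\le r\le \frac{\pi}{2\sqrt K}$ and $K\le \Lambda_F^2$. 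I would first try $\sqrt K=\frac{\pi}{2r}$ and take the averaging window as large as allowed; if that fails to produce the $\varepsilon$-power, I would then let $K$ absorb the $\varepsilon$-dependence. The second inequality in (\ref{lem4.2}) follows immediately from $\varepsilon^{\frac{(n-1)p}{2p-1}}\le\big((\pi/6)^{\frac{2p-1}{p}}\big)^{\frac{(n-1)p}{2p-1}}=(\pi/6)^{n-1}$.
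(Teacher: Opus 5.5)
Your proposal has a genuine gap at the step you yourself call the main obstacle. In the regime of Theorem \ref{Svol}(1), the term containing $A(t)$ can never acquire the factor $\overline{\|{\rm Ric}_{\infty}^{\Lambda_F^2}\|}_{p,R_0,\vartheta}^{\frac{(n-1)p}{2p-1}}(x_0)$. After multiplying (\ref{revolcom1}) by $e^{\vartheta r}\sin^{n-1}(\sqrt K r)$, the coefficient of $A(t)$ is $e^{\vartheta(r-t)}\sin^{n-1}(\sqrt K r)/\sin^{n-1}(\sqrt K t)$. Since $0<t\le r\le\frac{\pi}{2\sqrt K}$ and $\sin$ is increasing on $[0,\frac{\pi}{2}]$, this coefficient is $\ge 1$ for every choice of $K$ and $t$, however they depend on $\varepsilon$. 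Your proposed tuning $\pi/(2\sqrt K a)\approx\varepsilon^{\frac{p}{2p-1}}$ would need $\sqrt K a$ to be large. But $2a\le r\le\frac{\pi}{2\sqrt K}$ forces $\sqrt K a\le\frac{\pi}{4}$, i.e.\ $\big(\pi/(2\sqrt K a)\big)^{n-1}\ge 2^{n-1}$. So your argument only yields a Bishop-type bound $A(r)\le C e^{\vartheta r} m(B^+_{R_0}(x_0))$ with no small factor. The small power is exactly what Theorem \ref{myers} needs. As a sanity check, if ${\rm Ric}_\infty\ge(n-1)\Lambda_F^2$ the lemma asserts $A(r)=0$ for $r\ge\pi/\Lambda_F$. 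That is a Myers-type vanishing which no comparison on $[0,\frac{\pi}{2\sqrt K}]$ can detect.

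The missing idea is to place $r$ just inside the model's conjugate radius and use Theorem \ref{Svol}(2) instead. Set $\epsilon:=\overline{\|{\rm Ric}_{\infty}^{\Lambda_F^2}\|}_{p,R_0,\vartheta}^{\frac{p}{2p-1}}(x_0)\le\frac{\pi}{6}$ (in your notation $\epsilon=\varepsilon^{\frac{p}{2p-1}}$) and $K_r:=\big(\frac{\pi-\epsilon}{r}\big)^2\le\Lambda_F^2$. Then $\sqrt{K_r}\,r=\pi-\epsilon$, so $\sin(\sqrt{K_r}r)=\sin\epsilon\le\epsilon$. Multiplying (\ref{area2}) on $[t,r]$, with $t\ge\frac{\pi}{2\sqrt{K_r}}$, through by $e^{\frac{\vartheta r}{2p-1}}\sin^{\frac{n-1}{2p-1}}(\sqrt{K_r}r)$ isolates $A(r)^{\frac{1}{2p-1}}$. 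It also puts the factor $\epsilon^{\frac{n-1}{2p-1}}$ on both terms of the right-hand side. The price is $\int_t^r\sin^{-2}(\sqrt{K_r}s)\,ds\le\frac{\pi r}{4\epsilon}$, which follows from $\sin\theta\ge\frac{2}{\pi}(\pi-\theta)$ on $[\frac{\pi}{2},\pi]$. This $\epsilon^{-1}$ is cancelled by $\|{\rm Ric}_{\infty}^{\Lambda_F^2}\|_{p,R_0,\vartheta}^{\frac{p}{2p-1}}(x_0)=\epsilon\, m(B^+_{R_0}(x_0))^{\frac{1}{2p-1}}$, after bounding the $K_r$-norm via Remark \ref{epsilon} as you did. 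The hypothesis $\epsilon\le\frac{\pi}{6}$ is what guarantees $\frac{5\pi}{6\sqrt{K_r}}\le r$. You can therefore average over $t\in[\frac{\pi}{2\sqrt{K_r}},\frac{5\pi}{6\sqrt{K_r}}]$, where $\sin(\sqrt{K_r}t)\ge\frac12$ and the window has length $\frac{\pi}{3\sqrt{K_r}}\ge\frac{r}{3}$. This gives some $t_0$ with $A(t_0)\le\frac{3}{r}m(B^+_{R_0}(x_0))\le\frac{3\Lambda_F}{\pi}m(B^+_{R_0}(x_0))$. Combined with your inequality $(a+b)^{2p-1}\le 2^{2p-2}(a^{2p-1}+b^{2p-1})$ and $r\le R_0/\Lambda_F$, this reproduces exactly $C_1$. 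Your averaging and power-raising steps survive; only the choice of $K$, and hence the regime of Theorem \ref{Svol}, must change.
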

\begin{proof}
For simplicity, let $\epsilon:=\overline{\left\|{\rm{Ric}}_{\infty}^{\Lambda_{F}^2}\right\|}_{p, R_{0}, \vartheta}^{\frac{p}{2 p-1}}(x_{0})$ and $K_{r}:=\left(\frac{\pi-\epsilon}{r}\right)^{2}$ for any $r \in \left[\frac{\pi}{\Lambda_{F}}, \frac{R_0}{\Lambda_{F}}\right]$. By the assumption, $\epsilon \leq \frac{\pi}{6}$, $\frac{5\pi}{6\sqrt{K_{r}}}\leq r \leq \frac{\pi}{\sqrt{K_{r}}}$ and $K_{r} \leq \Lambda_{F}^2$.
For $t \in\left[\frac{\pi}{2\sqrt{K_{r}}}, r\right]$, we have $\sqrt{K_{r}} t \in\left[\frac{\pi}{2}, \pi\right)$ and
\beqn
\int_{t}^{r} \frac{1}{\sin^{2}\left(\sqrt{K_{r}} s\right)} ds
&\leq &  \left(\frac{\pi}{2}\right)^{2} \int_{t}^{r} \frac{1}{\left(\pi-\sqrt{K_{r}}s\right)^{2}} d s = \frac{\pi^{2}}{4}\frac{(r-t)}{(\pi-\sqrt{K_r}t)(\pi-\sqrt{K_r}r)}\\
&= & \frac{\pi^{2}}{4}\frac{(r-t)}{(\pi-\sqrt{K_r}t)\epsilon}\leq \frac{\pi r}{4 \epsilon},
\eeqn
where we have used the facts that $\sin \theta\geq \frac{2}{\pi}(\pi-\theta)$ for $\theta\in (\frac{\pi}{2}, \pi)$ and $t\rightarrow \frac{r-t}{\pi-\sqrt{K_r}t}$ is decreasing on $[0, \frac{\pi}{\sqrt{K_{r}}})$. By Remark \ref{epsilon}, we know that $\left\|{\rm Ric}_{\infty}^{K_r}\right\|_{p, r, \vartheta}^{\frac{p}{2 p-1}}(x_{0}) \leq \left\|{\rm{Ric}}_{\infty}^{\Lambda_{F}^2}\right\|_{p, r, \vartheta}^{\frac{p}{2 p-1}}(x_{0})$. Because  $\frac{\pi}{2\sqrt{K_{r}}}< t < r<\frac{\pi}{\sqrt{K_{r}}}$,  it follows from (\ref{area2}) with $K=K_{r}$ that
\beqn
&&  \left[\frac{A(r)}{e^{\vartheta r} \sin^{n-1}\left(\sqrt{K_{r}} r\right)}\right]^{\frac{1}{2 p-1}}-\left[\frac{A(t)}{e^{\vartheta t} \sin^{n-1}\left(\sqrt{K_{r}} t\right)}\right]^{\frac{1}{2 p-1}} \\
&& \leq \left(\frac{n-1}{(2p-1)(2p-n)}\right)^{\frac{p-1}{2p-1}} \left\|{\rm{Ric}}_{\infty}^{K_r}\right\|_{p, r, \vartheta}^{\frac{p}{2 p-1}}(x_{0}) \int_{t}^{r} \frac{1}{\sin^{2}\left(\sqrt{K_{r}} s\right)} ds\\
&& \leq \left(\frac{n-1}{(2p-1)(2p-n)}\right)^{\frac{p-1}{2p-1}} \left\|{\rm{Ric}}_{\infty}^{\Lambda_{F}^2}\right\|_{p, r, \vartheta}^{\frac{p}{2p-1}}(x_{0}) \frac{\pi r}{4 \epsilon}.
\eeqn
From the above inequality and by the fact that $\sin \left(\sqrt{K_{r}} r\right) = \sin(\pi-\epsilon)=\sin \epsilon\leq \epsilon$, we get
\beq\label{A(r)}
\left(A(r)\right)^{\frac{1}{2p-1}} &\leq & A(t)^{\frac{1}{2 p-1}}\left(e^{\vartheta (r-t)}\right)^{\frac{1}{2 p-1}}\left(\frac{\epsilon}{\sin \left(\sqrt{K_{r}}t\right)}\right)^{\frac{n-1}{2p-1}} \nonumber\\
&& +\left(\frac{n-1}{(2 p-1)(2 p-n)}\right)^{\frac{p-1}{2 p-1}} \left(e^{\vartheta r}\right)^{\frac{1}{2p-1}}  \left\|{\rm{Ric}}_{\infty}^{\Lambda_{F}^2}\right\|_{p, r, \vartheta}^{\frac{p}{2p-1}}(x_{0}) \frac{\pi r}{4} \epsilon^{\frac{n-2p}{2 p-1}} \nonumber\\
&\leq & A(t)^{\frac{1}{2p-1}}\left(e^{\vartheta (r-t)}\right)^{\frac{1}{2 p-1}}\left(\frac{\epsilon}{\sin \left(\sqrt{K_{r}} t\right)}\right)^{\frac{n-1}{2p-1}} \nonumber\\
&& +\left(\frac{n-1}{(2p-1)(2p-n)}\right)^{\frac{p-1}{2p-1}} \left(e^{\vartheta r}\right)^{\frac{1}{2p-1}} \left\|{\rm{Ric}}_{\infty}^{\Lambda_{F}^2}\right\|_{p, R_{0}, \vartheta}^{\frac{p}{2p-1}}(x_{0}) \frac{\pi r}{4} \epsilon^{\frac{n-2p}{2 p-1}} \nonumber\\
&\leq & A(t)^{\frac{1}{2p-1}}\left(e^{\vartheta r}\right)^{\frac{1}{2 p-1}}\left(\frac{\epsilon}{\sin \left(\sqrt{K_{r}} t\right)}\right)^{\frac{n-1}{2p-1}} \nonumber\\
&& +\left(\frac{n-1}{(2p-1)(2p-n)}\right)^{\frac{p-1}{2p-1}}  \left(e^{\vartheta r}\right)^{\frac{1}{2p-1}} m\left(B_{R_{0}}^{+}(x_{0})\right)^{\frac{1}{2p-1}} \overline{\left\|{\rm{Ric}}_{\infty}^{\Lambda_{F}^2}\right\|}_{p, R_{0}, \vartheta}^{\frac{p}{2p-1}}(x_{0}) \frac{\pi r}{4} \epsilon^{\frac{n-2p}{2 p-1}} \nonumber\\
&= & A(t)^{\frac{1}{2p-1}}\left(e^{\vartheta r}\right)^{\frac{1}{2 p-1}}\left(\frac{\epsilon}{\sin \left(\sqrt{K_{r}} t\right)}\right)^{\frac{n-1}{2p-1}} \nonumber\\
&& +\left(\frac{n-1}{(2p-1)(2p-n)}\right)^{\frac{p-1}{2p-1}}  \left(e^{\vartheta r}\right)^{\frac{1}{2p-1}} m\left(B_{R_{0}}^{+}(x_{0})\right)^{\frac{1}{2p-1}} \frac{\pi r}{4} \epsilon^{\frac{n-1}{2 p-1}},
\eeq
where we used the fact that $\left\|{\rm{Ric}}_{\infty}^{\Lambda_{F}^2}\right\|_{p, r, \vartheta}^{\frac{p}{2p-1}}(x_{0})$ is increasing with respect to $r$ in the second inequality and (\ref{iwRp}) in the third inequality. In particular, for any $t \in\left[\frac{\pi}{2 \sqrt{K_{r}}}, \frac{5 \pi}{6 \sqrt{K_{r}}}\right]$, the inequality (\ref{A(r)}) still holds and $\sqrt{K_{r}} t\in \left[\frac{\pi}{2}, \frac{5 \pi}{6}\right]$. Since the sine function is strictly decreasing on $\left[\frac{\pi}{2}, \frac{5 \pi}{6}\right]$, we have $\sin \left(\sqrt{K_{r}} t\right) \geq \sin \frac{5\pi}{6}=\frac{1}{2}$.
Then we obtain
\beqn
\left(A(r)\right)^{\frac{1}{2p-1}} &\leq & 2^{\frac{n-1}{2p-1}} A(t)^{\frac{1}{2p-1}} \left(e^{\vartheta r}\right)^{\frac{1}{2 p-1}} \epsilon^{\frac{n-1}{2p-1}} \nonumber\\
&& +\left(\frac{n-1}{(2p-1)(2p-n)}\right)^{\frac{p-1}{2p-1}}  \left(e^{\vartheta r}\right)^{\frac{1}{2p-1}} m\left(B_{R_{0}}^{+}(x_{0})\right)^{\frac{1}{2p-1}} \frac{\pi r}{4} \epsilon^{\frac{n-1}{2 p-1}}.
\eeqn
Now, by using $(a+b)^{2p-1} \leq 2^{2p-2}(a^{2p-1}+b^{2p-1})$ for $a, b>0$ and $p>1$, we obtain the following
\beq\label{A(r)2}
A(r) &\leq & 2^{2 p-2}\Big[2^{n-1} A(t) e^{\vartheta r} \epsilon^{n-1} \nonumber\\
&& +\left(\frac{n-1}{(2 p-1)(2 p-n)}\right)^{p-1} e^{\vartheta r} m\left(B_{R_{0}}^{+}(x_{0})\right) \left(\frac{\pi r}{4}\right)^{2 p-1} \epsilon^{n-1}\Big]\nonumber\\
&\leq & 2^{2 p+n-3} A(t) e^{\vartheta r} \epsilon^{n-1}\nonumber\\
&& +\left(\frac{n-1}{(2 p-1)(2 p-n)}\right)^{p-1} e^{\vartheta r}\frac{\pi^{2 p-1}}{4^{p}} m\left(B_{R_{0}}^{+}(x_{0})\right)r^{2p-1} \epsilon^{n-1}.
\eeq
Furthermore, by applying the First Mean-value Theorem for definite integral to $A(t)$ on $\left[\frac{\pi}{2 \sqrt{K_{r}}}, \frac{5 \pi}{6 \sqrt{K_{r}}}\right]$, we can deduce that there exists a $t_{0} \in\left[\frac{\pi}{2 \sqrt{K_{r}}}, \frac{5 \pi}{6 \sqrt{K_{r}}}\right]$ such that
\be
A(t_{0})=\frac{3}{\pi} \sqrt{K_{r}} \int_{\frac{\pi}{2 \sqrt{K_{r}}}}^{\frac{5 \pi}{6 \sqrt{K_{r}}}} A(t) d t \leq \frac{3}{r} \int_{0}^{R_{0}} A(t) d t = \frac{3}{r} m\left(B_{R_{0}}^{+}(x_{0})\right). \label{intmean}
\ee
Since (\ref{A(r)2}) is valid for all $t \in\left[\frac{\pi}{2 \sqrt{K_{r}}}, \frac{5 \pi}{6 \sqrt{K_{r}}}\right]$, it is particularly valid for $t_{0}$ in (\ref{intmean}). Hence, we have
\beqn
A(r) &\leq& 2^{2 p+n-3} \left(\frac{3}{r} m\left(B_{R_{0}}^{+}(x_{0})\right)\right) e^{\vartheta r} \epsilon^{n-1} \\
&& +\left(\frac{n-1}{(2 p-1)(2 p-n)}\right)^{p-1} e^{\vartheta r}\frac{\pi^{2 p-1}}{4^{p}} m\left(B_{R_{0}}^{+}(x_{0})\right)r^{2p-1} \epsilon^{n-1}\\
&\leq& 2^{2 p+n-3} \left(\frac{3\Lambda_{F}}{\pi} m\left(B_{R_{0}}^{+}(x_{0})\right)\right) e^{\vartheta r} \epsilon^{n-1}\\
&& +\left(\frac{n-1}{(2 p-1)(2 p-n)}\right)^{p-1} e^{\vartheta r}\frac{\pi^{2 p-1}}{4^{p}} m\left(B_{R_{0}}^{+}(x_{0})\right)\Lambda_{F}^{-(2p-1)}R_{0}^{2p-1} \epsilon^{n-1}\\
&=:& C_{1}(n,p,\Lambda_{F},R_{0}) e^{\vartheta r} m\left(B_{R_{0}}^{+}(x_{0})\right) \epsilon^{n-1},
\eeqn
where $C_{1}(n,p,\Lambda_{F},R_{0}):=\frac{3\cdot 2^{2p+n-3}\Lambda_{F}}{\pi} +\frac{\pi^{2p-1}}{4^{p}}\left(\frac{n-1}{(2p-1)(2 p-n)}\right)^{p-1} \Lambda_{F}^{-(2p-1)}R_{0}^{2p-1}$. By $\epsilon \leq \frac{\pi}{6}$, we obtain (\ref{lem4.2}).  This completes the proof of Lemma \ref{S(r)}.
\end{proof}

\vskip 2mm

In the following, let
\be
v(n,K,r,\vartheta):={\rm {Vol}}(\mathbb{S}^{n-1})\int_{0}^{r} e^{\vartheta t}s_{K}^{n-1}(t)dt, \label{vnkrv}
\ee
where ${\rm {Vol}}(\mathbb{S}^{n-1})$ denotes the Euclidean volume of the unit sphere $\mathbb{S}^{n-1}$ in $\mathbb{R}^n$. Following the argument of Theorem 3.2 in \cite{ChF}, we can get the following relative volume comparison easily from Theorem \ref{Lap1}.

\begin{thm}\label{vol}
Let $(M, F, m)$ be an $n$-dimensional Finsler metric measure manifold. Assume that $\mathbf{S}\geq -\vartheta$ for some $\vartheta \geq 0$.
Then for $p>\frac{n}{2}$, $K \in \mathbb{R}$ and $0<r \leq R$ $(R \leq \frac{\pi}{2 \sqrt{K}}$ when $K>0)$, there exists a constant $C_{2}=C_{2}(n, p, \vartheta, K, R)>0$ such that
\be
\left(\frac{m\left(B_{R}^{+}(x_{0})\right)}{v(n, K, R, \vartheta)}\right)^{\frac{1}{2 p-1}}-\left(\frac{m\left(B_{r}^{+}(x_{0})\right)}{v(n, K, r, \vartheta)}\right)^{\frac{1}{2 p-1}} \leq C_{2} \left\|{\rm{Ric}}_{\infty}^{K}\right\|_{p, R, \vartheta}^{\frac{p}{2 p-1}}(x_{0}), \label{volmv}
\ee
where $C_{2}(n, p, \vartheta, K, R):=\left(\frac{n-1}{(2 p-1)(2 p-n)}\right)^{\frac{p-1}{2 p-1}} {\rm{Vol}}\left(\mathbb{S}^{n-1}\right) \int_{0}^{R} e^{\frac{2 p}{2 p-1}\vartheta t} s_{K}^{n-1}(t)\left(\frac{t}{v(n, K, t, \vartheta)}\right)^{\frac{2 p}{2 p-1}} d t$ is nondecreasing in $R$.
\end{thm}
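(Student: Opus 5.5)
The idea is to follow the Petersen--Wei scheme used in Theorem 3.2 of \cite{ChF}: first compare areas on a fixed ray, then integrate to get volumes. The differentiated quantity will be the ratio of the volume $m(B_r^+(x_0))=\int_0^r A(t)\,dt$ to $v(n,K,r,\vartheta)$. It is convenient to write $\bar A(t):=e^{\vartheta t}s_K^{n-1}(t)$, so that $v(n,K,r,\vartheta)={\rm Vol}(\mathbb{S}^{n-1})\int_0^r \bar A(t)\,dt$.

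\textbf{Step 1 (pointwise area control).} On each fixed $\theta$, formula (\ref{DelLa}) gives
\[
\frac{\partial}{\partial t}\left(\frac{\sigma(t,\theta)}{e^{\vartheta t}s_K^{n-1}(t)}\right)=\frac{\sigma}{e^{\vartheta t}s_K^{n-1}}\left(\Delta t-H_K-\vartheta\right)\le \frac{\varphi\,\sigma}{e^{\vartheta t}s_K^{n-1}}.
\]
Combining this with H\"older and (\ref{flapcom}), exactly as in the proof of Theorem \ref{Svol}(1), I will get for $0<t\le R$ (within $\frac{\pi}{2\sqrt K}$ if $K>0$)
\[
\left(\frac{A(t)}{\bar A(t)}\right)^{\frac{1}{2p-1}}-\left(\frac{A(\tau)}{\bar A(\tau)}\right)^{\frac{1}{2p-1}}\le c(n,p)\,\|{\rm Ric}^K_\infty\|_{p,R,\vartheta}^{\frac{p}{2p-1}}(x_0)\int_\tau^t \bar A(s)^{-\frac{1}{2p-1}}ds ,
\]
where $c(n,p)=\left(\frac{n-1}{(2p-1)(2p-n)}\right)^{\frac{p-1}{2p-1}}$. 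I will also use the intermediate form from (\ref{Af1})–(\ref{Af3}), $\frac{d}{dt}\frac{A}{\bar A}\le \frac{1}{\bar A}\int\varphi\sigma\,d\theta$, which is linear in $\varphi$ and is more convenient below.

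\textbf{Step 2 (volume ratio derivative).} Set $V(r)=m(B_r^+(x_0))$ and $\bar V(r)=v(n,K,r,\vartheta)$. Then
\[
\frac{d}{dr}\frac{V}{\bar V}=\frac{A\bar V-V\bar V'}{\bar V^2},\qquad A\bar V-V\bar V'={\rm Vol}(\mathbb{S}^{n-1})\int_0^r\big(A(r)\bar A(t)-A(t)\bar A(r)\big)\,dt .
\]
Next I will bound $A(r)\bar A(t)-A(t)\bar A(r)$ by $\bar A(t)\bar A(r)\int_t^r \frac{1}{\bar A(s)}\int\varphi\sigma\,d\theta\,ds$, using the pointwise ray inequality integrated over $\theta$; the shrinking domains $\mathcal{D}_{x_0}(r)\subseteq\mathcal{D}_{x_0}(t)$ only help. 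Then I interchange the order of integration, using $\int_0^r\bar A\,dt\cdot(\cdot)$ together with the monotonicity of $\bar A$ (valid when $R\le\frac{\pi}{2\sqrt K}$). This yields
\[
\frac{d}{dr}\frac{V}{\bar V}\le \frac{C\, r}{\bar V(r)}\int_0^r\!\!\int \varphi\,\sigma\,d\theta\,dt\le \frac{C\,r}{\bar V(r)}\,V(r)^{1-\frac{1}{2p}}\left(\int_0^r\!\!\int\varphi^{2p}e^{-\vartheta t}\sigma\right)^{\frac{1}{2p}}e^{\frac{\vartheta r}{2p}\cdot(\ldots)}.
\]
The last step is H\"older with the weight $e^{-\vartheta t}$, whose lost factor is bounded by $e^{\vartheta r/(2p-1)}$-type terms.

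\textbf{Step 3 (ODE and integration).} Apply the first inequality of Theorem \ref{Lap1} to replace the $\varphi^{2p}$-integral by $\big(\frac{(n-1)(2p-1)}{2p-n}\big)^{1/2}\|{\rm Ric}^K_\infty\|_{p,R,\vartheta}^{1/2}$. Writing $y=V/\bar V$, the result is $y'\le C\,r\,\bar V^{-\frac{1}{2p}}e^{(\cdot)\vartheta r}\,y^{1-\frac{1}{2p}}\|{\rm Ric}\|^{1/2}$. Here a mismatch appears: the target exponent is $\frac{1}{2p-1}$ rather than $\frac{1}{2p}$. I will therefore redo Step 2 using the $\varphi^{2p-1}$ pointwise estimate (\ref{flapcom}) instead, which gives the $\frac{p}{2p-1}$ power and $y^{1-\frac{1}{2p-1}}$. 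Then $\frac{d}{dr}y^{\frac{1}{2p-1}}\le c(n,p)\,\|{\rm Ric}^K_\infty\|^{\frac{p}{2p-1}}_{p,R,\vartheta}\,{\rm Vol}(\mathbb{S}^{n-1})\,e^{\frac{2p}{2p-1}\vartheta r}s_K^{n-1}(r)\big(\frac{r}{\bar V(r)}\big)^{\frac{2p}{2p-1}}$, and integrating from $r$ to $R$ gives (\ref{volmv}) with the stated $C_2$. Since the integrand of $C_2$ is nonnegative, $C_2$ is nondecreasing in $R$.

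\textbf{Main obstacle.} The main obstacle is the bookkeeping in Step 2: making the powers of $e^{\vartheta t}$, $s_K$ and $V$ come out exactly as in $C_2$. This requires choosing precisely where to apply H\"older, namely with respect to $\sigma\,d\theta\,dt$ over $B_r$, with exponents $(2p-1)/(2p-2)$ and $2p-1$.
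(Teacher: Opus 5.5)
Your final route, once you drop the $\varphi^{2p}$ detour, is the standard Petersen--Wei argument that the paper invokes through Theorem 3.2 of \cite{ChF}, and it reproduces exactly the stated $C_2$, so the proposal is correct. The chain is: the linear ray inequality plus monotonicity of $\bar A(t)=e^{\vartheta t}s_K^{n-1}(t)$ gives $\frac{d}{dr}\frac{V}{\bar V}\le {\rm Vol}(\mathbb{S}^{n-1})\,\frac{r\,\bar A(r)}{\bar V(r)^{2}}\int_0^r\!\int_{\mathcal{D}_{x_0}(t)}\varphi\,\sigma\,d\theta\,dt$, followed by H\"older with exponents $2p-1$ and $\frac{2p-1}{2p-2}$ and then (\ref{flapcom}) integrated over $B_r$, which costs a factor $r\,e^{\vartheta r}$.

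There are two harmless slips. First, the Step 1 display should read $\int_\tau^t s_K(s)^{-\frac{n-1}{2p-1}}ds$ rather than $\int_\tau^t \bar A(s)^{-\frac{1}{2p-1}}ds$, because applying (\ref{flapcom}) costs a factor $e^{\vartheta s}$. Second, your intermediate bound in Step 2 drops the factor $\bar A(r)/\bar V(r)$. Neither matters, since Step 1 is never used and your final differential inequality for $y^{\frac{1}{2p-1}}$ is the right one.
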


\vskip 2mm

When $K=0$, $\vartheta=0$, write
$$
v(n, r):=v(n, 0, r, 0)={\rm {Vol}}(\mathbb{S}^{n-1}) \int_{0}^{r} t^{n-1} d t={\rm {Vol}}(\mathbb{S}^{n-1})\frac{r^{n}}{n}
$$
and
\beqn
C_{2}(n, p, r)&:=&C_{2}(n, p, 0, 0, r)\\
&=&\left(\frac{n-1}{(2 p-1)(2 p-n)}\right)^{\frac{p-1}{2p-1}} {\rm {Vol}}(\mathbb{S}^{n-1}) \int_{0}^{r} t^{n-1}\left(\frac{t}{{\rm {Vol}}(\mathbb{S}^{n-1})\frac{t^{n}}{n}}\right)^{\frac{2p}{2p-1}} d t\\
&=&\left(\frac{n^{2p}(n-1)^{p-1}(2p-1)^{p}}{(2p-n)^{3p-2}}\right)^{\frac{1}{2p-1}}{\rm {Vol}}(\mathbb{S}^{n-1})^{-\frac{1}{2p-1}} r^{\frac{2p-n}{2p-1}}.
\eeqn
By a simple calculation, we have
$$
C_{2}(n, p, r)^{2p-1}v(n, r)=\frac{n^{2p-1}(n-1)^{p-1}(2p-1)^{p}}{(2p-n)^{3p-2}}\cdot r^{2p}.
$$
Then we can obtain the following Bishop-Gromov type volume comparison.

\begin{thm}\label{BG}
Let $(M, F, m)$ be an $n$-dimensional Finsler metric measure manifold. Assume that $\mathbf{S} \geq 0$. Given $p>\frac{n}{2}$, $\Xi>1$ and $0<r_{1} \leq r_{2} \leq R$.  Then there exists a constant $\varsigma >0$, such that when $\overline{\left\|{\rm{Ric}}_{\infty}^{0}\right\|}_{p, R, 0}\leq \varsigma$,
the following inequality holds
\be\label{Bishop}
\frac{m(B^{+}_{r_{2}}(x_{0}))}{m(B^{+}_{r_{1}}(x_{0}))} \leq \Xi\left(\frac{r_{2}}{r_{1}}\right)^{n},
\ee
where $\varsigma:=\Big(\frac{1-\Xi^{-\frac{1}{2 p-1}}}{2 C_{2}(n,p,R)}\Big)^{\frac{2p-1}{p}} v(n,R)^{-\frac{1}{p}}$.
\end{thm}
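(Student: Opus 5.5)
Apply Theorem \ref{vol} with $K=0$, $\vartheta=0$ twice, once with outer radius $r_2$ and once with outer radius $R$. Since $\mathbf{S}\geq 0$, the hypothesis $\mathbf{S}\geq -\vartheta$ holds with $\vartheta=0$. For $0<r_1\leq r_2\leq R$, (\ref{volmv}) gives
\[
\left(\frac{m(B^{+}_{r_{2}}(x_{0}))}{v(n,r_{2})}\right)^{\frac{1}{2p-1}}-\left(\frac{m(B^{+}_{r_{1}}(x_{0}))}{v(n,r_{1})}\right)^{\frac{1}{2p-1}}\leq C_{2}(n,p,r_{2})\left\|{\rm Ric}_{\infty}^{0}\right\|_{p,r_{2},0}^{\frac{p}{2p-1}}(x_{0}).
\]
Since $C_2$ is nondecreasing in the radius and $\|\cdot\|_{p,r,0}(x_0)$ is nondecreasing in $r$, the right-hand side is at most $C_2(n,p,R)\|{\rm Ric}^0_\infty\|_{p,R,0}^{\frac{p}{2p-1}}(x_0)$. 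Writing $\|{\rm Ric}^0_\infty\|_{p,R,0}(x_0)=m(B^+_R(x_0))^{1/p}\,\overline{\|{\rm Ric}^0_\infty\|}_{p,R,0}(x_0)\leq m(B^+_R(x_0))^{1/p}\varsigma$, the right-hand side is bounded by
\[
C_2(n,p,R)\,\varsigma^{\frac{p}{2p-1}}\, m(B^+_R(x_0))^{\frac{1}{2p-1}}.
\]

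The main obstacle is that this error is measured against $m(B^+_R)$, whereas the conclusion should be relative to $m(B^+_{r_2})$ or $m(B^+_{r_1})$. To handle it, first apply Theorem \ref{vol} with $r=r_1$ and outer radius $R$, which yields
\[
\left(\frac{m(B^+_R)}{v(n,R)}\right)^{\frac{1}{2p-1}}\leq \left(\frac{m(B^+_{r_1})}{v(n,r_1)}\right)^{\frac{1}{2p-1}}+C_2(n,p,R)\,\varsigma^{\frac{p}{2p-1}}\,m(B^+_R)^{\frac{1}{2p-1}}.
\]
By the choice of $\varsigma$,
\[
C_2(n,p,R)\,\varsigma^{\frac{p}{2p-1}}=\frac{1-\Xi^{-\frac{1}{2p-1}}}{2}\,v(n,R)^{-\frac{1}{2p-1}}=:\delta\, v(n,R)^{-\frac{1}{2p-1}},
\]
with $\delta<\tfrac12$. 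Absorbing the error term then gives
\[
\left(\frac{m(B^+_R)}{v(n,R)}\right)^{\frac{1}{2p-1}}\leq \frac{1}{1-\delta}\left(\frac{m(B^+_{r_1})}{v(n,r_1)}\right)^{\frac{1}{2p-1}}.
\]

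Now insert this into the first inequality. Its error term equals $\delta\,(m(B^+_R)/v(n,R))^{1/(2p-1)}$, which is at most $\frac{\delta}{1-\delta}(m(B^+_{r_1})/v(n,r_1))^{1/(2p-1)}$. Hence
\[
\left(\frac{m(B^+_{r_2})}{v(n,r_2)}\right)^{\frac{1}{2p-1}}\leq\frac{1}{1-\delta}\left(\frac{m(B^+_{r_1})}{v(n,r_1)}\right)^{\frac{1}{2p-1}}.
\]
It remains to check that $(1-\delta)^{-(2p-1)}\leq\Xi$. Set $a=\Xi^{-\frac{1}{2p-1}}\in(0,1)$, so $1-\delta=\frac{1+a}{2}$, and we need $\frac{1+a}{2}\geq a$, i.e. $a\leq 1$, which holds. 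Raising to the power $2p-1$ and using $v(n,r_2)/v(n,r_1)=(r_2/r_1)^n$ gives (\ref{Bishop}). The only delicate point is the absorption step; the factor $2$ in $\varsigma$ is what leaves exactly the needed slack, and everything else is monotonicity bookkeeping.
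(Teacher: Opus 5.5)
Your argument is correct and is essentially the paper's. Both use two applications of Theorem~\ref{vol} with $K=\vartheta=0$, plus an absorption step that turns the error term, which is naturally measured against $m(B^+_R(x_0))$, into one relative to a smaller ball. The paper absorbs via the pair $(r_2,R)$ to bound its constant $c$, while you absorb via $(r_1,R)$, which is only a bookkeeping difference.

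One small correction to your closing remark: in your route the factor $2$ in $\varsigma$ is not actually needed, since $\delta=1-\Xi^{-\frac{1}{2p-1}}$ already gives $(1-\delta)^{-(2p-1)}=\Xi$. The paper uses the factor $2$ to guarantee $\bigl[1-v(n,R)^{\frac{1}{2p-1}}C_2(n,p,R)\overline{\|{\rm Ric}_{\infty}^{0}\|}_{p,R,0}^{\frac{p}{2p-1}}\bigr]^{-1}\le 2$.
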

\begin{proof}
For $0<r_{1} \leq r_{2} \leq R$, it follows from Theorem \ref{vol} that
$$
\left(\frac{v(n, r_{1})}{v(n, r_{2})}\right)^{\frac{1}{2 p-1}}-
\left(\frac{m\left(B_{r_{1}}^{+}(x_{0})\right)}{m\left(B_{r_{2}}^{+}(x_{0})\right)}\right)^{\frac{1}{2 p-1}} \leq C_{2}(n, p, r_{2})\left\|{\rm{Ric}}_{\infty}^{0}\right\|_{p, r_{2}, 0}^{\frac{p}{2 p-1}}(x_{0})\left(\frac{v(n, r_{1})}{m\left(B_{r_{2}}^{+}(x_{0})\right)}\right)^{\frac{1}{2 p-1}}.
$$
The inequality can be rewritten as
$$
\frac{m(B^{+}_{r_{1}}(x_{0}))}{m(B^{+}_{r_{2}}(x_{0}))} \geq (1-c)^{2 p-1} \frac{v(n, r_{1})}{v(n, r_{2})},
$$
where $c:=\left(\frac{v(n, r_{2})}{m(B^{+}_{r_{2}}(x_{0}))}\right)^{\frac{1}{2p-1}} C_{2}(n,p,r_{2})\left\|{\rm{Ric}}_{\infty}^{0}\right\|_{p, r_{2}, 0}^{\frac{p}{2p-1}}(x_{0})$.
In order to estimate $c$, we use Theorem \ref{vol} again and obtain
\beqn
\left(\frac{v(n, r_{2})}{m(B^{+}_{r_{2}}(x_{0}))}\right)^{\frac{1}{2 p-1}}
& \leq & \left[\left(\frac{m(B_{R}^{+}(x_{0}))}{v(n, R)}\right)^{\frac{1}{2p-1}} - C_{2}(n,p,R) \left\|{\rm{Ric}}_{\infty}^{0}\right\|_{p, R, 0}^{\frac{p}{2p-1}}(x_0)\right]^{-1} \\
& = & \left(\frac{v(n, R)}{m(B_{R}^{+}(x_0))}\right)^{\frac{1}{2p-1}}\left[1-v(n, R)^{\frac{1}{2p-1}} C_{2}(n,p,R) \overline{\left\|{\rm{Ric}}_{\infty}^{0}\right\|}_{p, R, 0}^{\frac{p}{2p-1}}(x_{0})\right]^{-1}.
\eeqn
Now, choose $\varsigma_{1}(n,p,R):=\left( 2 C_{2}(n,p,R)\right)^{-\frac{2 p-1}{p}} v(n, R)^{-\frac{1}{p}}$. Then, when $\overline{\left\|{\rm{Ric}}_{\infty}^{0}\right\|}_{p, R, 0}\leq\varsigma_{1}$, we have
$$
\left(\frac{v(n, r_{2})}{m(B^{+}_{r_{2}}(x_0))}\right)^{\frac{1}{2 p-1}} \leq 2\left(\frac{v(n, R)}{m(B^{+}_{R}(x_{0}))}\right)^{\frac{1}{2 p-1}}.
$$
In this case, by the definition of $c$, we can get
\beqn
c & \leq & 2\left(\frac{v(n, R)}{m(B^{+}_{R}(x_{0}))}\right)^{\frac{1}{2 p-1}} C_{2}(n,p,r_{2})\left\|{\rm{Ric}}_{\infty}^{0}\right\|_{p, r_{2}, 0}^{\frac{p}{2p-1}}(x_{0})\\
  & \leq & 2 v(n, R)^{\frac{1}{2p-1}} C_{2}(n,p,R) \overline{\left\|{\rm{Ric}}_{\infty}^{0}\right\|}_{p, R, 0}^{\frac{p}{2p-1}}(x_{0}).
\eeqn
Furthermore, for $\Xi>1$, choose
$\varsigma_{2}:=\Big(\frac{1-\Xi^{-\frac{1}{2 p-1}}}{2 C_{2}(n,p,R)}\Big)^{\frac{2p-1}{p}} v(n,R)^{-\frac{1}{p}}$. Then, when $\overline{\left\|{\rm{Ric}}_{\infty}^{0}\right\|}_{p, R, 0} \leq \varsigma_{2}$,
we have $c\leq 1-\Xi^{-\frac{1}{2 p-1}}$.

Finally, choose $\varsigma:=\min\{\varsigma_{1}, \varsigma_{2}\}=\varsigma_{2}$. Then, when $\overline{\left\|{\rm{Ric}}_{\infty}^{0}\right\|}_{p, R, 0} \leq \varsigma$, we can obtain the following
$$
\frac{m(B^{+}_{r_{2}}(x_{0}))}{m(B^{+}_{r_{1}}(x_{0}))} \leq \Xi\frac{v(n, r_{2})}{v(n, r_{1})} =\Xi\left(\frac{r_{2}}{r_{1}}\right)^{n}.
$$
This completes the proof.
\end{proof}

\vskip 2mm

Next, we will establish a volume comparison theorem controlled by the integral weighted Ricci curvature  for nonconcentric balls, which will play a crucial role in the proof of Theorem \ref{myers}.

\begin{thm}\label{noncon}
Let $(M, F, m)$ be an $n$-dimensional $(n \geq 2)$ forward complete Finsler metric measure manifold with $\Lambda_F<\infty$. Assume that $\mathbf{S}\geq -\vartheta$ for some $\vartheta \geq 0$.
Given constants $p \in (\frac{n}{2}, n]$, $\alpha\in [\frac{1}{2},1)$ and a fixed point $x_{0}\in M$. Then,  for $0 \leq r < R_{0}$ and an arbitrarily fixed point $x\in M$ satisfying that $d_{F}(x_{0}, x) \leq \Lambda_{F}^{-1}\left( R_{0}-r\right)$, there exist positive constants $D_{1}=D_{1}(n, p, \vartheta , R_{0})$ and $D_{2}=D_{2}(n,p,\Lambda_{F},\vartheta,\alpha,R_{0})$, such that, when $\overline{\left\|{\rm{Ric}}_{\infty}^{0}\right\|}_{p, R_{0}, \vartheta}<D_{1}^{-\frac{2p-1}{p}}$, we have
\beq
&& \left(\frac{m\left(B_{\Lambda_{F}^{-1}r}^{+}(x)\right)}{m\left(B_{R_{0}}^{+}(x_{0})\right)}\right)^{\frac{1}{2 p-1}} \geq  \Big[\frac{1}{4} \left(1-D_{1} \overline{\left\|{\rm{Ric}}_{\infty}^{0}\right\|}_{p, R_{0}, \vartheta}^{\frac{p}{2p-1}}\right)
e^{-\frac{\vartheta R_{0}}{2 p-1}}\Lambda_{F}^{-\left(2 \vartheta R_{0}b(p,\alpha)+\frac{5n}{2p-1}\right)}   \nonumber \\
&& \times \left(\frac{r}{R_{0}}\right)^{\vartheta R_{0} b(p,\alpha)+\frac{4n-2p}{2p-1}}  -  \frac{D_{2}\overline{\left\|{\rm{Ric}}_{\infty}^{0}\right\|}_{p, R_{0}, \vartheta}^{\frac{p}{2p-1}}}{1-D_{1} \overline{\left\|{\rm{Ric}}_{\infty}^{0}\right\|}_{p, R_{0}, \vartheta}^{\frac{p}{2p-1}}} \Big] \left(\frac{r}{R_{0}}\right)^{\frac{2p-n}{2p-1}}, \label{nconcom}
\eeq
where
\beqn
&& D_{1}(n,p,\vartheta,R_{0}):=D(n, p) e^{\frac{2 p+1}{2 p-1} \vartheta R_{0}} R_{0}^{\frac{2p}{2 p-1}},  \ \ \ \ \ \ \ \ \ D(n, p):=\frac{n(n-1)^{\frac{p-1}{2 p-1}}(2 p-1)^{\frac{p}{2 p-1}}}{(2 p-n)^\frac{3p-2}{2p-1}},    \\
&& D_{2}(n,p,\Lambda_{F},\vartheta,\alpha,R_{0}):=\frac{D_{1}(n,p,\vartheta,R_{0}) e^{\frac{\vartheta R_{0}}{2 p-1}} \Lambda_{F}^{-\frac{2p-n}{2p-1}}}{1-e^{-\frac{\vartheta R_{0}}{2p-1}}(2-\alpha)^{\frac{2p-2n}{2p-1}}\alpha^{\frac{n}{2p-1}} }, \ \ \  b(p,\alpha):=\frac{1}{(2p-1)\ln(2-\alpha)}.
\eeqn
\end{thm}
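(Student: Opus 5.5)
We adapt the Riemannian nonconcentric-ball argument (Petersen--Wei, Li--Wu--Zheng) to the weighted Finsler setting. The structure of the constants suggests an iteration: one applies Theorem \ref{vol} with $K=0$ repeatedly along a chain of balls with geometrically shrinking radii, with ratio governed by $\alpha$. The first step normalizes Theorem \ref{vol}. Since $v(n,0,t,\vartheta)\geq \mathrm{Vol}(\mathbb{S}^{n-1})t^n/n$, the constant satisfies $C_2(n,p,\vartheta,0,R_0)\, v(n,0,R_0,\vartheta)^{\frac{1}{2p-1}}\leq D_1(n,p,\vartheta,R_0)$. Combining this with $v(n,0,R,\vartheta)\leq e^{\vartheta R}v(n,R)$ and dividing (\ref{volmv}) by $m(B^+_{R_0}(y))^{1/(2p-1)}$ gives, for any center $y$ and $0<s\le R\le R_0$,
\begin{equation*}
\left(\frac{m(B^+_s(y))}{m(B^+_{R}(y))}\right)^{\frac{1}{2p-1}}\ge \left(\frac{v(n,0,s,\vartheta)}{v(n,0,R,\vartheta)}\right)^{\frac{1}{2p-1}}\Big(1-D_1\,\overline{\|{\rm Ric}^0_\infty\|}^{\frac{p}{2p-1}}_{p,R_0,\vartheta}\Big),
\end{equation*}
after using Remark (\ref{ric<}) to pass between $R$ and $R_0$. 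The smallness hypothesis $\overline{\|\cdot\|}<D_1^{-(2p-1)/p}$ keeps the bracket positive; this is the origin of the factor $1-D_1\overline{\|\cdot\|}^{p/(2p-1)}$ in (\ref{nconcom}). We also use $v(n,0,s,\vartheta)/v(n,0,R,\vartheta)\ge e^{-\vartheta R}(s/R)^n$.

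The second step transfers the estimate from $x_0$ to $x$. If $d_F(x_0,x)\le\Lambda_F^{-1}(R_0-r)$, then $d_F(x,x_0)\le R_0-r$ by reversibility, so by the triangle inequality $B^+_{R_0}(x_0)\subset B^+_{R_0+\Lambda_F^{-1}(R_0-r)}(x)$. This radius exceeds $R_0$, so Theorem \ref{vol} cannot be applied to it directly. Instead we work at scales $r_k=(2-\alpha)^{-k}R_0$. Set $a_k:=m(B^+_{\Lambda_F^{-1}r_k}(x_k))$, where $x_k$ are points on a minimal geodesic from $x_0$ to $x$ chosen so that $d_F(x_{k},x_{k+1})$ is comparable to $\Lambda_F^{-1}(r_k-r_{k+1})$. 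Consecutive balls are then nested up to factors of $\Lambda_F$: $B^+_{\Lambda_F^{-1}r_{k+1}}(x_{k+1})\supset B^+_{\alpha\Lambda_F^{-1}r_{k+1}\cdot(\cdots)}(x_k)$. Applying the normalized inequality at center $x_k$ gives a recursion
\[
a_{k+1}^{\frac{1}{2p-1}}\ge \lambda\,a_k^{\frac{1}{2p-1}}-E\,m(B^+_{R_0}(x_0))^{\frac{1}{2p-1}}\overline{\|\cdot\|}^{\frac{p}{2p-1}}r_k^{\frac{2p-n}{2p-1}},
\]
with $\lambda=e^{-\vartheta R_0/(2p-1)}(2-\alpha)^{-n/(2p-1)}\alpha^{n/(2p-1)}\Lambda_F^{-c}$. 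The normalization by the fixed ball $m(B^+_{R_0}(x_0))$ rather than by $m(B^+_{R_0}(x_k))$ is precisely what produces the additive error term.

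The third step iterates and sums. After $N\approx \ln(R_0/r)/\ln(2-\alpha)$ steps the product of the $\lambda$'s yields a power of $r/R_0$: the exponential factor gives exponent $\vartheta R_0 b(p,\alpha)$ with $b=\frac{1}{(2p-1)\ln(2-\alpha)}$, and the geometric factors give $\frac{4n-2p}{2p-1}$ together with $\frac{2p-n}{2p-1}$. The errors form a geometric series with ratio $e^{-\vartheta R_0/(2p-1)}(2-\alpha)^{\frac{2p-2n}{2p-1}}\alpha^{\frac{n}{2p-1}}$. This ratio is less than $1$ since $p\le n$ and $\alpha<1$, and summing the series produces the denominator in $D_2$. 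The factor $\frac14$ and the power $\Lambda_F^{-5n/(2p-1)}$ absorb the first and last steps: comparing $B^+_{R_0}(x_0)$ with a ball of radius $\Lambda_F^{-1}R_0/2$, and adjusting the final radius to exactly $\Lambda_F^{-1}r$.

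We expect the main obstacle to be the second step. The difficulty is choosing the chain points and radii so that each pair of consecutive balls is genuinely nested under the asymmetric distance, while the radius ratio stays at $2-\alpha$ and the accumulated $\Lambda_F$ losses remain bounded by the stated exponents. Our first attempt would place $x_{k}$ at $d_F(x_0,x_k)=\min\{d_F(x_0,x),\Lambda_F^{-1}(R_0-r_k)\}$ and check the nesting via $d_F(z,x_k)\le \Lambda_F d_F(x_k,z)$.
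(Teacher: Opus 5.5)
The gap is in your second step, exactly where you expect the difficulty, and the chain you propose does not close it. Your overall scheme is the paper's: iterate Theorem \ref{vol} with $K=0$ along a chain with radius ratio $2-\alpha$, normalise every error by $m(B^+_{R_0}(x_0))$, and sum a geometric series of ratio $e^{-\frac{\vartheta R_0}{2p-1}}(2-\alpha)^{\frac{2p-2n}{2p-1}}\alpha^{\frac{n}{2p-1}}$. The failure is in the chain itself.

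Take your centres $d_F(x_0,x_k)=\min\{d_F(x_0,x),\Lambda_F^{-1}(R_0-r_k)\}$ and balls $B^+_{\Lambda_F^{-1}r_k}(x_k)$. The forward step $d_F(x_k,x_{k+1})$ is at most $\Lambda_F^{-1}(r_k-r_{k+1})$, i.e. at most the decrease of the radius. But fitting a ball around $x_k$ inside a ball around $x_{k+1}$ needs the \emph{reversed} distance. The only general control on it is $d_F(x_{k+1},x_k)\le \Lambda_F d_F(x_k,x_{k+1})\le (1-\alpha)r_{k+1}$. This already exceeds the target radius $\Lambda_F^{-1}r_{k+1}$ once $\Lambda_F(1-\alpha)\ge 1$ (e.g. $\alpha=\frac12$, $\Lambda_F=2$), so no ball around $x_k$ is guaranteed to fit.

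Repairing this by enlarging balls ("nested up to factors of $\Lambda_F$") puts a factor $\Lambda_F^{-c}$, $c>0$, into your $\lambda$. Over $N\approx \ln(R_0/r)/\ln(2-\alpha)$ steps this compounds to $(r/R_0)^{c\ln\Lambda_F/\ln(2-\alpha)}$. The exponent of $r/R_0$ would then grow with $\Lambda_F$, whereas in (\ref{nconcom}) it is $\Lambda_F$-independent and $\Lambda_F$ appears only as a fixed prefactor. Your exponent count is right only if $\lambda$ contains no $\Lambda_F$, which your chain does not achieve.

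The missing idea is that the forward step between consecutive centres must be $\Lambda_F^{-1}$ times the decrease of the radii, so that reversing it costs nothing. The paper does this by anchoring the chain at $x$:
\begin{itemize}
\item It takes $x_i=\gamma(t_i)$ with $t_i=d_F(x_0,x)-\Lambda_F^{-2}((2-\alpha)^{i-1}-1)r$, $R_i=\Lambda_F^{-1}(2-\alpha)^{i-1}r$ and $r_{i+1}=\alpha R_i$.
\item Then $\Lambda_F(t_i-t_{i+1})+r_{i+1}\le R_i$ (equality except for the last link, with $t_k:=0$, $x_k:=x_0$), so $B^+_{r_{i+1}}(x_{i+1})\subset B^+_{R_i}(x_i)$ with no loss.
\item Also $\Lambda_F t_i+R_i\le R_0$, so every $B^+_{R_i}(x_i)$ lies in $B^+_{R_0}(x_0)$ and (\ref{volzz}) applies.
\item It uses $k=\lfloor \log_{2-\alpha}(1+\Lambda_F^2 d_F(x_0,x)/r)\rfloor+2$ links.
\item It closes with one concentric application of Theorem \ref{vol} at $x_0$ via $B^+_{r_k}(x_0)\subset B^+_{R_{k-1}}(x_{k-1})$. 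The factor $(r_k/R_0)^{\frac{n}{2p-1}}$ from this step cancels the $(2-\alpha)^{-\frac{(k-2)n}{2p-1}}$ lost in the iteration.
\end{itemize}
$\Lambda_F$ then enters only through $1+\Lambda_F^2 d_F(x_0,x)/r\le \Lambda_F^2R_0/r$ and through $r_k$. This gives $\Lambda_F^{-2\vartheta R_0 b(p,\alpha)}$ and $\Lambda_F^{-\frac{5n}{2p-1}}$, and the $\frac14$ is simply $\alpha^{\frac{n}{2p-1}}\ge(\frac12)^2$.

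Within your own top-down scheme the same effect comes from keeping your centres but using balls $B^+_{r_k}(x_k)$ instead of $B^+_{\Lambda_F^{-1}r_k}(x_k)$. Then $\Lambda_F d_F(x_k,x_{k+1})+\alpha r_{k+1}\le r_{k+1}$ and $\Lambda_F d_F(x_0,x_k)+r_k\le R_0$. Each step costs exactly $e^{-\frac{\vartheta R_0}{2p-1}}(\frac{\alpha}{2-\alpha})^{\frac{n}{2p-1}}$. Taking $N$ minimal with $r_N\le r$ (so $x_N=x$), one concentric comparison at $x$ finishes, giving an estimate of the same shape with different $\Lambda_F$-prefactors.

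A smaller point: your multiplicative inequality in the first step is justified as written only for $R=R_0$. For $R<R_0$, passing through (\ref{ric<}) requires bounding $m(B^+_{R_0}(y))/m(B^+_R(y))$ by Theorem \ref{vol} once more. That is how the paper obtains the additive estimate (\ref{volzz}), with error proportional to $R^{\frac{2p-n}{2p-1}}$, which your recursion actually needs.
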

\begin{proof}
Recalling the definitions of $v(n,K,R,\vartheta)$ and $C_{2}(n,p,\vartheta,K,R)$ with $K=0$, we can obtain the following for $0< r \leq R$
\be
\frac{v(n,0,R,\vartheta)}{v(n,0,r,\vartheta)}=\frac{{\rm {Vol}}(\mathbb{S}^{n-1}) \int_{0}^{R} e^{\vartheta t}t^{n-1} d t}{{\rm {Vol}}(\mathbb{S}^{n-1}) \int_{0}^{r} e^{\vartheta t} t^{n-1} d t} \leq e^{\vartheta R} \left(\frac{R}{r}\right)^{n} \label{v0}
\ee
and
\beq
C_{2}(n,p,\vartheta,0,R)v(n,0,R,\vartheta)^{\frac{1}{2p-1}} &\leq & \frac{n(n-1)^{\frac{p-1}{2p-1}}(2 p-1)^{\frac{p}{2p-1}}}{(2 p-n)^{\frac{3 p-2}{2p-1}}} e^{\frac{2 p+1}{2 p-1} \vartheta R } R ^{\frac{2p}{2 p-1}}\nonumber\\
&=& D(n,p) e^{\frac{2 p+1}{2 p-1} \vartheta R } R ^{\frac{2p}{2 p-1}}:= D_{1}(n,p,\vartheta, R). \label{C2v}
\eeq
In particular, by the assumption, for $0<R \leq R_{0}$, we have
\be
D_{1}(n,p,\vartheta,R_{0})< \overline{\left\|{\rm{Ric}}_{\infty}^{0}\right\|}_{p, R_{0}, \vartheta}^{- \frac{p}{2p-1}}, \ \ \ D_{1}(n,p,\vartheta,R)\leq D_{1}(n,p,\vartheta,R_{0})R_{0}^{-\frac{2p}{2p-1}} R_{}^{\frac{2p}{2p-1}}. \label{C2VRic}
\ee

In the following,  we will divide the proof into four steps.

$\mathbf{Step\ 1.}$ \ For any $z \in B_{R_{0}}^{+}(x_{0})$ and $0<r \leq R \leq R_{0}-\Lambda_{F}d_{F}(x_{0}, z)$, it is easy to see that $B_{R}^{+}(z) \subset B_{R_{0}}^{+}(x_{0})$. Applying (\ref{volmv}) with $K=0$ to the geodesic balls centered at $z$ yields the following inequality
\beq
m\left(B_{R}^{+}(z)\right)^{\frac{1}{2 p-1}}
&\leq & C_{2}(n, p, \vartheta, 0, R) v(n, 0, R, \vartheta)^{\frac{1}{2 p-1}} \left\|{\rm{Ric}}_{\infty}^{0}\right\|_{p, R, \vartheta}^{\frac{p}{2 p-1}}(z)\nonumber\\
&& + \left(\frac{v(n, 0, R, \vartheta)}{v(n, 0, r, \vartheta)}\right)^{\frac{1}{2 p-1}} m\left(B_{r}^{+}(z)\right)^{\frac{1}{2 p-1}}. \label{Bz}
\eeq
Further, dividing both sides of (\ref{Bz}) by $\left(m\left(B_{R_{0}}^{+}(x_{0})\right)\right)^{\frac{1}{2p-1}}$, we obtain
\beq
\left(\frac{m(B_{R}^{+}(z))}{m(B_{R_0}^{+}(x_{0}))}\right)^{\frac{1}{2p-1}}
&\leq& C_{2}(n,p,\vartheta,0,R) \left(\frac{v(n,0,R,\vartheta)}{m(B^{+}_{R_{0}}(x_{0}))}\right)^{\frac{1}{2p-1}} \left\|{\rm{Ric}}_{\infty}^{0}\right\|^{\frac{p}{2p-1}}_{p, R, \vartheta}(z) \nonumber\\
&&+\left(\frac{m(B_{r}^{+}(z))}{m(B_{R_0}^{+}(x_{0}))}\right)^{\frac{1}{2p-1}} \left(\frac{v(n,0,R,\vartheta)}{v(n,0,r,\vartheta)}\right)^{\frac{1}{2p-1}} \nonumber\\
&\leq& C_{2}(n,p,\vartheta,0,R) \left(\frac{v(n,0,R,\vartheta)}{m(B^{+}_{R}(z))}\right)^{\frac{1}{2p-1}} \left\|{\rm{Ric}}_{\infty}^{0}\right\|^{\frac{p}{2p-1}}_{p, R, \vartheta}(z) \nonumber\\
&&+\left(\frac{m(B_{r}^{+}(z))}{m(B_{R_0}^{+}(x_{0}))}\right)^{\frac{1}{2p-1}} \left(\frac{v(n,0,R,\vartheta)}{v(n,0,r,\vartheta)}\right)^{\frac{1}{2p-1}} \nonumber\\
&\leq& C_{2}(n,p,\vartheta,0,R)v(n,0,R,\vartheta)^{\frac{1}{2p-1}} \overline{\left\|{\rm{Ric}}_{\infty}^{0}\right\|}_{p, R, \vartheta}^{\frac{p}{2p-1}} \nonumber\\
&&+\left(\frac{m(B_{r}^{+}(z))}{m(B_{R_0}^{+}(x_{0}))}\right)^{\frac{1}{2p-1}} \left(\frac{v(n,0,R,\vartheta)}{v(n,0,r,\vartheta)}\right)^{\frac{1}{2p-1}} \nonumber\\
&\leq & C_{2}(n,p,\vartheta,0,R)v(n,0,R,\vartheta)^{\frac{1}{2p-1}} \sup_{x\in M} \left(\frac{m(B_{R_{0}}^{+}(x))}{m(B_{R}^{+}(x))}\right)^{\frac{1}{2p-1}} \overline{\left\|{\rm{Ric}}_{\infty}^{0}\right\|}_{p, R_{0}, \vartheta}^{\frac{p}{2p-1}}  \nonumber\\ &&+\left(\frac{m(B_{r}^{+}(z))}{m(B_{R_0}^{+}(x_{0}))}\right)^{\frac{1}{2p-1}} \left(\frac{v(n,0,R,\vartheta)}{v(n,0,r,\vartheta)}\right)^{\frac{1}{2p-1}},\label{BRzx0}
\eeq
where we have used (\ref{iwRp}) and (\ref{iwRM}) in the third inequality and (\ref{ric<}) in the fourth inequality.
Moreover, by (\ref{volmv}) again, we have
\beqn
\left(\frac{m(B_{R_{0}}^{+}(x))}{m(B_{R}^{+}(x))}\right)^{\frac{1}{2p-1}}
&\leq& C_{2}(n,p,\vartheta,0,R_{0}) \left(\frac{v(n,0,R_{0},\vartheta)}{m(B^{+}_{R}(x))}\right)^{\frac{1}{2p-1}} \left\|{\rm{Ric}}_{\infty}^{0}\right\|^{\frac{p}{2p-1}}_{p, R_{0}, \vartheta}(x) \nonumber\\
&& +\left(\frac{v(n,0,R_{0},\vartheta)}{v(n,0,R,\vartheta)}\right)^{\frac{1}{2p-1}}\nonumber\\
&=& C_{2}(n,p,\vartheta,0,R_{0}) v(n,0,R_{0},\vartheta)^{\frac{1}{2p-1}} \left(\frac{m(B_{R_{0}}^{+}(x))}{m(B_{R}^{+}(x))}\right)^{\frac{1}{2p-1}}\nonumber\\
&& \times \overline{\left\|{\rm{Ric}}_{\infty}^{0}\right\|}_{p, R_{0}, \vartheta}^{\frac{p}{2p-1}}
+ \left(\frac{v(n,0,R_{0},\vartheta)}{v(n,0,R,\vartheta)}\right)^{\frac{1}{2p-1}},
\eeqn
from which, we can obtain
\be
\left(\frac{m(B_{R_{0}}^{+}(x))}{m(B_{R}^{+}(x))}\right)^{\frac{1}{2p-1}}
\leq \left(\frac{v(n,0,R_{0},\vartheta)}{v(n,0,R,\vartheta)}\right)^{\frac{1}{2p-1}} \left(1-C_{2}(n,p,\vartheta,0,R_{0}) v(n,0,R_{0},\vartheta)^{\frac{1}{2p-1}} \overline{\left\|{\rm{Ric}}_{\infty}^{0}\right\|}_{p, R_{0}, \vartheta}^{\frac{p}{2p-1}}\right)^{-1}. \label{volBR}
\ee
Here, from (\ref{C2v}) and (\ref{C2VRic}), we can assert that
\[
1-C_{2}(n,p,\vartheta,0,R_{0}) v(n,0,R_{0},\vartheta)^{\frac{1}{2p-1}} \overline{\left\|{\rm{Ric}}_{\infty}^{0}\right\|}_{p, R_{0}, \vartheta}^{\frac{p}{2p-1}}>0.
\]
Hence, from (\ref{BRzx0}) and (\ref{volBR}), we get
\beq
\left(\frac{m(B_{R}^{+}(z))}{m(B_{R_0}^{+}(x_{0}))}\right)^{\frac{1}{2p-1}}
&\leq& C_{2}(n,p,\vartheta,0,R)v(n,0,R,\vartheta)^{\frac{1}{2p-1}} \left(\frac{v(n,0,R_{0},\vartheta)}{v(n,0,R,\vartheta)}\right)^{\frac{1}{2p-1}}\nonumber \\
&&\times\left(1-C_{2}(n,p,\vartheta,0,R_{0}) v(n,0,R_{0},\vartheta)^{\frac{1}{2p-1}} \overline{\left\|{\rm{Ric}}_{\infty}^{0}\right\|}_{p, R_{0}, \vartheta}^{\frac{p}{2p-1}}\right)^{-1} \overline{\left\|{\rm{Ric}}_{\infty}^{0}\right\|}_{p, R_{0}, \vartheta}^{\frac{p}{2p-1}}\nonumber\\ &&+\left(\frac{m(B_{r}^{+}(z))}{m(B_{R_0}^{+}(x_{0}))}\right)^{\frac{1}{2p-1}} \left(\frac{v(n,0,R,\vartheta)}{v(n,0,r,\vartheta)}\right)^{\frac{1}{2p-1}}. \label{volz}
\eeq

Then, from (\ref{volz}) and by (\ref{v0}), (\ref{C2v}) and (\ref{C2VRic}), we have
\beq
\left(\frac{m(B_{R}^{+}(z))}{m(B_{R_0}^{+}(x_{0}))}\right)^{\frac{1}{2p-1}}
&\leq& \frac{D_{1}(n, p,\vartheta,R_{0}) e^{\frac{\vartheta R_{0}}{2p-1}} R_{0}^{\frac{n-2p}{2 p-1}} R^{\frac{2p-n}{2p-1}} \overline{\left\|{\rm{Ric}}_{\infty}^{0}\right\|}_{p, R_0, \vartheta}^{\frac{p}{2p-1}}}{1-D_{1}(n, p,\vartheta,R_{0}) \overline{\left\|{\rm{Ric}}_{\infty}^{0}\right\|}_{p, R_0, \vartheta}^{\frac{p}{2p-1}}} \nonumber \\
&& +\left(\frac{m\left(B_{r}^{+}(z)\right)}{m\left(B_{R_{0}}^{+}(x_{0})\right)}\right)^{\frac{1}{2p-1}}  e^{\frac{\vartheta R_{0}}{2p-1}}\left(\frac{R}{r}\right)^{\frac{n}{2p-1}}. \label{volzz}
\eeq

$\mathbf{Step\ 2.}$ \ Let $\gamma:[0, d_{F}(x_{0},x)] \rightarrow M$ be the minimizing geodesic from $x_{0}=\gamma(0)$ to $x=\gamma(d_{F}(x_{0},x))$.
Take $k=\bigg\lfloor\frac{\ln \Big(1+\tfrac{d_{F}(x_{0}, x)}{\Lambda_{F}^{-2} r}\Big)}{\ln (2-\alpha)}\bigg\rfloor+2$, where $\lfloor s \rfloor$ denotes the floor function, which takes the greatest integer less than or equal to the real number $s$.
Let $t_{i}:=d_{F}(x_{0}, x)-\Lambda_{F}^{-2}\left((2-\alpha)^{i-1}-1\right)r$,  $1\leq i\leq k-1$. We will prove that $t_{i} \in [0, d_{F}(x_{0},x)]$ for  $1\leq i\leq k-1$.

Because $2-\alpha\in (1,\frac{3}{2}]$, it is obvious that $t_{i}\leq d_{F}(x_{0},x)$. Moreover, because the sequence $\{t_{i}\}_{i=1}^{k-1}$ is strictly decreasing in $i$, so it suffices to verify $t_{k-1}\geq 0$.
Actually, by the definition of $k$ and the fact that $\lfloor s \rfloor \leq s\leq \lfloor s \rfloor +1$, we have
\be
k-2 = \bigg\lfloor\frac{\ln \Big(1+\tfrac{d_{F}(x_{0}, x)}{\Lambda_{F}^{-2} r}\Big)}{\ln (2-\alpha)}\bigg\rfloor
\leq \frac{\ln \Big(1+\frac{d_{F}(x_{0}, x)}{\Lambda_{F}^{-2} r}\Big)}{\ln (2-\alpha)}
\leq \bigg\lfloor\frac{\ln \Big(1+\tfrac{d_{F}(x_{0}, x)}{\Lambda_{F}^{-2} r}\Big)}{\ln (2-\alpha)}\bigg\rfloor +1 = k-1. \label{floor}
\ee
By using $\log_{b} a = \frac{\ln a}{\ln b}$ for $a, b >0$ and $b\neq 1$, we can rewrite above inequality as
$$
k-2 \leq \log_{(2-\alpha)}\Big(1+\frac{d_{F}(x_{0}, x)}{\Lambda_{F}^{-2} r}\Big) \leq k-1.
$$
Since $2-\alpha\in (1,\frac{3}{2}]$, the function $(2-\alpha)^t$ is increasing in $t$. Therefore, we obtain
$$
(2-\alpha)^{k-2}\leq (2-\alpha)^{\log_{(2-\alpha)}\Big(1+\frac{d_{F}(x_{0}, x)}{\Lambda_{F}^{-2} r}\Big)} \leq (2-\alpha)^{k-1},
$$
which means that
\be
(2-\alpha)^{k-2}\leq 1+\frac{d_{F}(x_{0}, x)}{\Lambda_{F}^{-2} r}\leq (2-\alpha)^{k-1}. \label{k}
\ee
From the first inequality of (\ref{k}), we have $t_{k-1} = d_{F}(x_{0}, x)-\Lambda_{F}^{-2}((2-\alpha)^{k-2}-1)r \geq 0$. Thus we can conclude that $0\leq t_{i}\leq d_{F}(x_{0},x)$  for $1\leq i\leq k-1$.
Furthermore, it is reasonable to let $x_{i}:=\gamma(t_{i})$ for $1 \leq i \leq k-1$ and  $x_{k}:=x_{0}$.

\vskip 2mm
$\mathbf{Step\ 3.}$ \ For $1\leq i\leq k$, define $r_{i}:=\Lambda_{F}^{-1}\alpha(2-\alpha)^{i-2}r$, $R_{i}:=\Lambda_{F}^{-1}(2-\alpha)^{i-1}r$. We claim that $B_{r_{i+1}}^{+}(x_{i+1}) \subset B_{R_{i}}^{+}(x_{i})$ for $1 \leq i \leq k-1$.

Firstly, for $1\leq i\leq k-2$ and any $\bar{x}\in B_{r_{i+1}}^{+}(x_{i+1})$, we have
\beqn
d_{F}(x_{i},\bar{x})&\leq& d_{F}(x_{i},x_{i+1})+d_{F}(x_{i+1},\bar{x})\leq \Lambda_{F}d_{F}(x_{i+1},x_{i})+ r_{i+1}\\
&\leq & \Lambda_{F}^{-1}\left[((2-\alpha)^{i}-1)r-((2-\alpha)^{i-1}-1)r\right]+r_{i+1}\\
&= & \Lambda_{F}^{-1}(1-\alpha)(2-\alpha)^{i-1}r+\Lambda_{F}^{-1}\alpha(2-\alpha)^{i-1}r = \Lambda_{F}^{-1}(2-\alpha)^{i-1}r = R_{i}.
\eeqn
Thus $\bar{x}\in B_{R_{i}}^{+}(x_{i})$, which means that $B_{r_{i+1}}^{+}(x_{i+1}) \subset B_{R_{i}}^{+}(x_{i})$ for $1\leq i\leq k-2$.

Next, we consider the case when $i=k-1$. Recalling that $x_{k} =x_{0}$ and noticing that the second inequality of (\ref{k}) implies $d_{F}(x_{0},x)\leq\Lambda_{F}^{-2}\left((2-\alpha)^{k-1}-1\right)r$, for any $\bar{x}\in B_{r_{k}}^{+}(x_{0})$, we can get
\beqn
d_{F}(x_{k-1},\bar{x})&\leq& d_{F}(x_{k-1},x_{0})+d_{F}(x_{0},\bar{x})\leq \Lambda_{F}d_{F}(x_{0},x_{k-1})+d_{F}(x_{0},\bar{x})\\
&\leq & \Lambda_{F}\left[d_{F}(x_{0}, x)-\Lambda_{F}^{-2}\left((2-\alpha)^{k-2}-1\right)r\right]+r_{k}\\
&\leq & \Lambda_{F}\left[\Lambda_{F}^{-2}\left((2-\alpha)^{k-1}-1\right)r-\Lambda_{F}^{-2}\left((2-\alpha)^{k-2}-1\right)r\right]+r_{k}\\
&=& \Lambda_{F}^{-1}(1-\alpha)(2-\alpha)^{k-2}r+\Lambda_{F}^{-1}\alpha(2-\alpha)^{k-2}r = \Lambda_{F}^{-1}(2-\alpha)^{k-2}r = R_{k-1},
\eeqn
which means $B_{r_{k}}^{+}(x_{0}) \subset B_{R_{k-1}}^{+}(x_{k-1})$.

In sum, we conclude that $B_{r_{i+1}}^{+}\left(x_{i+1}\right) \subset B_{R_{i}}^{+}\left(x_{i}\right)$ for $1 \leq i \leq k-1$.

\vskip 2mm

$\mathbf{Step\ 4.}$ \ In the following, we will apply the iteration trick to get the volume comparison for nonconcentric balls.

By the discussions in {\bf Step 2} and {\bf Step 3}, we know that
\beqn
d_{F}(x_{0}, x_{i})&=& t_{i} =d_{F}(x_{0}, x)-\Lambda_{F}^{-2}\left((2-\alpha)^{i-1}-1\right)r = d_{F}(x_{0}, x)-\Lambda_{F}^{-2} (2-\alpha)^{i-1} r + \Lambda_{F}^{-2}r \\
&\leq & \Lambda_{F}^{-1}(R_{0}-r)- \Lambda_{F}^{-1}R_{i}+\Lambda_{F}^{-1}r = \Lambda_{F}^{-1}(R_{0}-R_{i}).
\eeqn
Hence, $x_{i}$ and $r_{i}, \ R_{i}$ satisfy that $0<r_{i}\leq R_{i}\leq R_{0}-\Lambda_{F}d_{F}(x_{0},x_{i})$  for $1\leq i\leq k-1$. By the discussion in {\bf Step 1},
we have $B_{R_{i}}^{+}\left(x_{i}\right)\subset B_{R_{0}}^{+}(x_{0})$ for $1\leq i\leq k-1$. Further, for $2\leq i\leq k-1$, we can rewrite (\ref{volzz}) as
\beq
\left(\frac{m\left(B_{R_{i}}^{+}\left(x_{i}\right)\right)}{m\left(B_{R_{0}}^{+}(x_{0})\right)}\right)^{\frac{1}{2 p-1}}
&\leq& \frac{D_{1}(n, p,\vartheta,R_{0}) e^{\frac{\vartheta R_{0}}{2p-1}} R_{0}^{\frac{n-2p}{2 p-1}} R_{i}^{\frac{2p-n}{2p-1}}\overline{\left\|{\rm{Ric}}_{\infty}^{0}\right\|}_{p, R_{0}, \vartheta}^{\frac{p}{2p-1}}}{1-D_{1}(n, p,\vartheta,R_{0}) \overline{\left\|{\rm{Ric}}_{\infty}^{0}\right\|}_{p, R_{0}, \vartheta}^{\frac{p}{2p-1}}}\nonumber\\
&& +\left(\frac{m\left(B_{r_{i}}^{+}(x_{i})\right)}{m\left(B_{R_{0}}^{+}(x_{0})\right)}\right)^{\frac{1}{2p-1}}  e^{\frac{\vartheta R_{0}}{2p-1}}\left(\frac{R_{i}}{r_{i}}\right)^{\frac{n}{2p-1}}\nonumber\\
&\leq & \frac{D_{1}(n, p,\vartheta,R_{0}) e^{\frac{\vartheta R_{0}}{2p-1}} R_{0}^{\frac{n-2p}{2 p-1}} R_{i}^{\frac{2p-n}{2p-1}}\overline{\left\|{\rm{Ric}}_{\infty}^{0}\right\|}_{p, R_{0}, \vartheta}^{\frac{p}{2p-1}}}{1-D_{1}(n, p,\vartheta,R_{0}) \overline{\left\|{\rm{Ric}}_{\infty}^{0}\right\|}_{p, R_{0}, \vartheta}^{\frac{p}{2p-1}}}\nonumber\\
&& +\left(\frac{m\left(B_{R_{i-1}}^{+}(x_{i-1})\right)}{m\left(B_{R_{0}}^{+}(x_{0})\right)}\right)^{\frac{1}{2p-1}}  e^{\frac{\vartheta R_{0}}{2p-1}}\left(\frac{R_{i}}{r_{i}}\right)^{\frac{n}{2p-1}} \nonumber\\
& = & \frac{D_{1}(n, p,\vartheta,R_{0}) e^{\frac{\vartheta R_{0}}{2p-1}} R_{0}^{\frac{n-2p}{2 p-1}} (\Lambda_{F}^{-1}r)^{\frac{2p-n}{2p-1}}}{1-D_{1}(n, p,\vartheta,R_{0}) \overline{\left\|{\rm{Ric}}_{\infty}^{0}\right\|}_{p, R_{0}, \vartheta}^{\frac{p}{2p-1}}}\overline{\left\|{\rm{Ric}}_{\infty}^{0}\right\|}_{p, R_{0}, \vartheta}^{\frac{p}{2p-1}} ((2-\alpha)^{\frac{2p-n}{2p-1}})^{i-1} \nonumber\\
&&+ \left(e^{\vartheta R_{0}}\left(\frac{2-\alpha}{\alpha}\right)^{n} \right)^{\frac{1}{2p-1}} \left(\frac{m\left(B_{R_{i-1}}^{+}(x_{i-1})\right)}{m\left(B_{R_{0}}^{+}(x_{0})\right)}\right)^{\frac{1}{2p-1}}. \label{itera}
\eeq
Put $a_{i}:=\left(\frac{m\left(B_{R_{i}}^{+}(x_{i})\right)}{m\left(B_{R_{0}}^{+}(x_{0})\right)}\right)^{\frac{1}{2 p-1}}$ $(1\leq i \leq k-1)$ and let
\beqn
&& C_{0}:=\frac{D_{1}(n, p,\vartheta,R_{0}) e^{\frac{\vartheta R_{0}}{2p-1}} R_{0}^{\frac{n-2p}{2 p-1}} (\Lambda_{F}^{-1}r)^{\frac{2p-n}{2p-1}}}{1-D_{1}(n, p,\vartheta,R_{0}) \overline{\left\|{\rm{Ric}}_{\infty}^{0}\right\|}_{p, R_{0}, \vartheta}^{\frac{p}{2p-1}}}\overline{\left\|{\rm{Ric}}_{\infty}^{0}\right\|}_{p, R_{0}, \vartheta}^{\frac{p}{2p-1}} \\
&& \beta:=(2-\alpha)^{\frac{2p-n}{2p-1}}, \ \ \ \ \ l:=\left(e^{\vartheta R_{0}}\left(\frac{2-\alpha}{\alpha}\right)^{n} \right)^{\frac{1}{2p-1}}.
\eeqn
Then (\ref{itera}) becomes
$$
a_{i}\leq C_{0}\beta^{i-1}+l a_{i-1}, \quad 2\leq i\leq k-1.
$$
Iterating this inequality gives
$$
a_{i}\leq l^{i-1}\left(a_{1}+C_{0}\sum_{s=1}^{i-1}\left(\frac{\beta}{l}\right)^{s}\right), \quad 2\leq i\leq k-1.
$$
Since $ p\in (\frac{n}{2}, n]$ and $\alpha\in [\frac{1}{2},1)$, we know that $\frac{\beta}{l}=e^{-\frac{\vartheta R_{0}}{2p-1}} \alpha^{\frac{n}{2p-1}}(2-\alpha)^{\frac{2p-2n}{2p-1}}<1$.
Then we can deduce the following
$$
a_{i}\leq   l^{i-1}\Big(a_{1}+\frac{C_{0}}{1-\frac{\beta}{l}}\Big),  \quad 2\leq i\leq k-1.
$$
Taking $i=k-1$ and recalling the definitions of $x_{i}$ and $R_{i}$ give
\beq
&& \left(\frac{m\left(B_{R_{k-1}}^{+}\left(x_{k-1}\right)\right)}{m\left(B_{R_{0}}^{+}(x_{0})\right)}\right)^{\frac{1}{2 p-1}}
\leq \left(e^{\vartheta R_{0}}\left(\frac{2-\alpha}{\alpha}\right)^{n}\right)^{\frac{k-2}{2p-1}} \Big[\left(\frac{m\left(B_{R_{1}}^{+}(x_{1})\right)}{m\left(B_{R_{0}}^{+}(x_{0})\right)}\right)^{\frac{1}{2 p-1}}\nonumber \\
&&+\frac{C_{0}}{1-e^{-\frac{\vartheta R_{0}}{2p-1}}(2-\alpha)^{\frac{2p-2n}{2p-1}} \alpha^{\frac{n}{2 p-1}}}\Big]\nonumber\\
&& = \left(e^{\vartheta R_{0}}\left(\frac{2-\alpha}{\alpha}\right)^{n}\right)^{\frac{k-2}{2p-1}} \left[\left(\frac{m\left(B_{\Lambda_{F}^{-1}r}^{+}(x)\right)}{m\left(B_{R_{0}}^{+}(x_{0})\right)}\right)^{\frac{1}{2 p-1}}+\frac{C_{0}}{1-e^{-\frac{\vartheta R_{0}}{2p-1}}(2-\alpha)^{\frac{2p-2n}{2p-1}} \alpha^{\frac{n}{2 p-1}}}\right]. \label{est1}
\eeq

On the other hand, by {\bf Step 3}, $B_{r_{k}}^{+}(x_{0}) \subset B_{R_{k-1}}^{+}(x_{k-1})$. Then,
\[
\left(\frac{m\left(B_{R_{k-1}}^{+}(x_{k-1})\right)}{m\left(B_{R_{0}}^{+}(x_{0})\right)}\right)^{\frac{1}{2p-1}} \geq \left(\frac{m\left(B_{r_{k}}^{+}(x_{k})\right)}{m\left(B_{R_{0}}^{+}(x_{0})\right)}\right)^{\frac{1}{2p-1}}=\left(\frac{m\left(B_{r_{k}}^{+}(x_{0})\right)}{m\left(B_{R_{0}}^{+}(x_{0})\right)}\right)^{\frac{1}{2p-1}}.
\]
Furthermore, by (\ref{volmv}) with $K=0$, we have
\beqn
&& \left(\frac{m\left(B_{r_{k}}^{+}(x_{0})\right)}{m\left(B_{R_{0}}^{+}(x_{0})\right)}\right)^{\frac{1}{2p-1}} \geq  \left(\frac{v(n,0,r_{k},\vartheta)}{v(n,0,R_{0},\vartheta)}\right)^{\frac{1}{2p-1}}\nonumber\\
&& \times \left(1-C_{2}(n,p,\vartheta,0,R_{0})\left(\frac{v(n,0,R_{0},\vartheta)}{m(B^{+}_{R_{0}}(x_{0}))}\right)^{\frac{1}{2p-1}} \left\|{\rm{Ric}}_{\infty}^{0}\right\|^{\frac{p}{2p-1}}_{p, R_{0}, \vartheta}(x_{0})\right)\nonumber\\
&&\geq \left(\frac{v(n,0,r_{k},\vartheta)}{v(n,0,R_{0},\vartheta)}\right)^{\frac{1}{2p-1}} \left(1-C_{2}(n,p,\vartheta,0,R_{0}) v(n,0,R_{0},\vartheta)^{\frac{1}{2p-1}} \overline{\left\|{\rm{Ric}}_{\infty}^{0}\right\|}_{p, R_{0}, \vartheta}^{\frac{p}{2p-1}}\right)\nonumber\\
&& \geq e^{-\frac{\vartheta R_{0}}{2p-1}}\left(\frac{r_{k}}{R_{0}}\right)^{\frac{n}{2 p-1}} \left(1-D_{1}(n, p, \vartheta , R_{0})\overline{\left\|{\rm{Ric}}_{\infty}^{0}\right\|}_{p, R_{0}, \vartheta}^{\frac{p}{2 p-1}}\right)\nonumber\\
&& = e^{-\frac{\vartheta R_{0}}{2 p-1}}\Lambda_{F}^{-\frac{n}{2p-1}} \alpha^{\frac{n}{2 p-1}} (2-\alpha)^{\frac{(k-2)n}{2p-1}}\left(\frac{r}{R_{0}}\right)^{\frac{n}{2p-1}} \left(1-D_{1}(n,p,\vartheta, R_{0}) \overline{\left\|{\rm{Ric}}_{\infty}^{0}\right\|}_{p, R_{0}, \vartheta}^{\frac{p}{2 p-1}}\right),
\eeqn
where we have used (\ref{iwRp}) and (\ref{iwRM}) in second inequality and used (\ref{v0}), (\ref{C2v}) in third inequality. We have also used the definition of $r_{k}$ in the last equality. Then we conclude the following
\beq
\left(\frac{m\left(B_{R_{k-1}}^{+}(x_{k-1})\right)}{m\left(B_{R_{0}}^{+}(x_{0})\right)}\right)^{\frac{1}{2p-1}}&\geq & e^{-\frac{\vartheta R_{0}}{2 p-1}}\Lambda_{F}^{-\frac{n}{2p-1}} \alpha^{\frac{n}{2 p-1}} (2-\alpha)^{\frac{(k-2)n}{2p-1}}\left(\frac{r}{R_{0}}\right)^{\frac{n}{2p-1}} \nonumber\\
&& \times \left(1-D_{1}(n,p,\vartheta, R_{0}) \overline{\left\|{\rm{Ric}}_{\infty}^{0}\right\|}_{p, R_{0}, \vartheta}^{\frac{p}{2 p-1}}\right). \label{est2}
\eeq
Thus, combining (\ref{est1}) with (\ref{est2}) yields
\beq
\left(\frac{m\left(B_{\Lambda_{F}^{-1}r}^{+}(x)\right)}{m\left(B_{R_{0}}^{+}(x_{0})\right)}\right)^{\frac{1}{2 p-1}}
&\geq & e^{-\frac{\vartheta R_{0}}{2p-1}(k-1)} \Lambda_{F}^{-\frac{n}{2p-1}} \alpha^{\frac{(k-1)n}{2p-1}} \left(\frac{r}{R_{0}}\right)^{\frac{n}{2p-1}} \left(1-D_{1}(n,p,\vartheta, R_{0}) \overline{\left\|{\rm{Ric}}_{\infty}^{0}\right\|}_{p, R_{0}, \vartheta}^{\frac{p}{2 p-1}}\right) \nonumber\\
& -& \frac{D_{1}(n, p,\vartheta,R_{0}) e^{\frac{\vartheta R_{0}}{2p-1}} R_{0}^{\frac{n-2p}{2 p-1}} (\Lambda_{F}^{-1}r)^{\frac{2p-n}{2p-1}} \overline{\left\|{\rm{Ric}}_{\infty}^{0}\right\|}_{p, R_{0}, \vartheta}^{\frac{p}{2p-1}}}{\left(1-D_{1}(n, p,\vartheta,R_{0}) \overline{\left\|{\rm{Ric}}_{\infty}^{0}\right\|}_{p, R_{0}, \vartheta}^{\frac{p}{2p-1}}\right)\left(1-e^{-\frac{\vartheta R_{0}}{2p-1}} \alpha^{\frac{n}{2 p-1}}(2-\alpha)^{\frac{2p-2n}{2p-1}}\right)}. \label{nonconcom}
\eeq
Moreover, since $ p\in (\frac{n}{2}, n]$ and $\alpha\in [\frac{1}{2},1)$, it is not difficult to see that $\frac{n}{2p-1}\leq \frac{n}{n-1}\leq 2$ and $-\ln \alpha\leq 2\ln(2-\alpha)$.
By (\ref{floor}) and noticing that $\ln \alpha <0$, we have
\beqn
\alpha^{\frac{(k-1)n}{2p-1}}
&=& \left(e^{(k-2)\ln \alpha}\cdot\alpha\right)^{\frac{n}{2p-1}}
\geq \left(e^{\ln \Big(1+\tfrac{d_{F}(x_{0}, x)}{\Lambda_{F}^{-2} r}\Big)\frac{\ln \alpha}{\ln (2-\alpha)}}\cdot\alpha\right)^{\frac{n}{2p-1}}\\
&=& \left(\Big(1+\frac{d_{F}(x_{0}, x)}{\Lambda_{F}^{-2} r}\Big)^{\frac{\ln \alpha}{\ln (2-\alpha)}}\cdot\alpha\right)^{\frac{n}{2p-1}}
\geq \alpha^{\frac{n}{2p-1}} \left(\frac{\Lambda_{F}^{-2} r}{R_{0}}\right)^{\frac{2 n}{2p-1}} \geq \frac{1}{4}\Lambda_{F}^{-\frac{4n}{2p-1}}\big(\frac{r}{R_{0}}\big)^{\frac{2n}{2p-1}}
\eeqn
and
\beqn
e^{-\frac{(k-1)\vartheta R_{0}}{2p-1}}&=&(e^{k-2}\cdot e)^{-\frac{\vartheta R_{0}}{2p-1}}
\geq \left(\left(1+\frac{d_{F}(x_{0}, x)}{\Lambda_{F}^{-2} r}\right)^{\frac{1}{\ln (2-\alpha)}}\cdot e\right)^{-\frac{\vartheta R_{0}}{2p-1}}\\
&\geq& e^{-\frac{\vartheta R_{0}}{2p-1}} \left(\frac{R_{0}}{\Lambda_{F}^{-2} r}\right)^{-\vartheta R_{0} b(p,\alpha)}
=e^{-\frac{\vartheta R_{0}}{2p-1}}\Lambda_{F}^{-2\vartheta R_{0} b(p,\alpha)} \left(\frac{r}{R_{0}}\right)^{\vartheta R_{0} b(p,\alpha)},
\eeqn
where $b(p,\alpha)=\frac{1}{(2p-1)\ln(2-\alpha)}$. Finally, from (\ref{nonconcom}) and these two inequalities, we obtain (\ref{nconcom}).
\end{proof}
\vskip 2mm

As the end of this section, we give the following lemma, which will be used to prove the compactness of the universal covering space $\widetilde{M}$ of $M$ in Theorem \ref{boma}.

\begin{lem}\label{chfen}{\rm (\cite{ChF1}, Theorem 1.1)} Let $(M, F, m)$ be an $n$-dimensional forward complete Finsler measure space. Assume that ${\rm Ric}_{\infty}\geq -K$  for some $K\geq 0$.  Then, for any $0< r_{1} < r_{2}$, we have
\be
\frac{m(B_{r_{2}}(x_0))}{m(B_{r_{1}}(x_0))}\leq \left(\frac{r_{2}}{r_{1}}\right)^{n+1}e^{\frac{K+ \vartheta_{0}^{2}}{6}r_{2}^2}, \label{doubvol}
\ee
where  $x_{0}\in M$ is an arbitrary fixed point and $\vartheta_{0}$ is defined by (\ref{supS}).
\end{lem}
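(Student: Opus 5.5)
The plan is the classical Bishop--Gromov argument run in geodesic polar coordinates $(r,\theta)$ about $x_0$. The key step is a pointwise upper bound on $\Delta r=\partial_r\ln\sigma$ that uses only ${\rm Ric}_\infty\ge -K$ and $|\mathbf S|\le\vartheta_0 F$; no $N$-weighted curvature is needed. Write $h={\rm tr}_{\nabla r}\nabla^2 r$ and $\psi:=\Delta r=h-\mathbf S(\nabla r)$ along a unit-speed geodesic from $x_0$, for $0<t<i_y$. The Riccati identity quoted from \cite{WuXin} in Section \ref{Laplacian} gives $h'+\frac{h^2}{n-1}\le -{\rm Ric}(\nabla r)$, using $\sum(\nabla^2r(E_i,E_j))^2\ge h^2/(n-1)$. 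Since $\frac{d}{dt}\mathbf S(\nabla r)=\dot{\mathbf S}$ and ${\rm Ric}_\infty={\rm Ric}+\dot{\mathbf S}\ge -K$, this yields
\[
\psi'\le K-\frac{h^2}{n-1}.
\]
The point of this form is that the term $\dot{\mathbf S}$ has been absorbed exactly, so no derivative of the S-curvature has to be controlled.

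Next I would multiply by $t^2$ and integrate from $0$ to $r$. Since $h\sim\frac{n-1}{t}$ and $\mathbf S$ is bounded, $t^2\psi\to0$ as $t\to0^+$. Integrating $t^2\psi'$ by parts and using $\psi=h-\mathbf S$ gives
\[
r^2\psi(r)\le \frac{Kr^3}{3}+\int_0^r\Big(2th-\frac{t^2h^2}{n-1}\Big)dt-2\int_0^r t\,\mathbf S\,dt .
\]
The first integrand is at most $n-1$ by completing the square, and $|\mathbf S|\le\vartheta_0$. Hence
\[
\psi(r)\le\frac{n-1}{r}+\frac{Kr}{3}+\vartheta_0\le\frac{n}{r}+\frac{(K+\vartheta_0^2)\,r}{3},
\]
where the last step is the AM--GM inequality $\vartheta_0\le \frac1r+\frac{\vartheta_0^2 r}{4}$. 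Integrating $\partial_r\ln\sigma=\psi$ from $s$ to $r$ then gives, for $0<s<r<i_y$,
\[
\frac{\sigma(r,\theta)}{\sigma(s,\theta)}\le f_\theta(r,s):=\Big(\frac rs\Big)^n e^{\frac{K+\vartheta_0^2}{6}(r^2-s^2)}.
\]
Setting $\sigma(\cdot,\theta)=0$ beyond the cut value preserves this inequality, because the cut-domains are nested.

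Finally I would pass from densities to balls. Put $\lambda=r_2/r_1>1$ and $c=\frac{K+\vartheta_0^2}{6}$. By the change of variables $t=\lambda u$,
\[
m(B_{r_2})=\int_{S_{x_0}M}\int_0^{r_2}\sigma\,dt\,d\theta=\lambda\int_{S_{x_0}M}\int_0^{r_1}\sigma(\lambda u,\theta)\,du\,d\theta .
\]
The density estimate gives $\sigma(\lambda u,\theta)\le \lambda^n e^{c(\lambda^2-1)u^2}\sigma(u,\theta)\le\lambda^n e^{c r_2^2}\sigma(u,\theta)$, since $(\lambda^2-1)u^2\le r_2^2$ for $u\le r_1$. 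Integrating this yields \eqref{doubvol}. Forward completeness is used here so that $m(B_{r})$ is given by the polar-coordinate formula and the cut locus has measure zero.

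I expect the main obstacle to be the Riccati step. One must check that the identity of \cite{WuXin} together with ${\rm Ric}_\infty$ really gives the clean inequality for $\psi$, with $\dot{\mathbf S}=\frac{d}{dt}\mathbf S(\gamma,\dot\gamma)$ along the geodesic. One must also handle the boundary term at $t=0$ correctly. The remaining steps are routine.
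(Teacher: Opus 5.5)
Your proof is correct, and it is self-contained where the paper is not: the paper gives no argument for this lemma and simply imports it from \cite{ChF1}. The key steps check out, including the one you flagged. Since $\nabla^2 r(\nabla r,\cdot)=0$, the identity from \cite{WuXin} gives $h'+\frac{h^2}{n-1}\le-{\rm Ric}(\nabla r)$. Because $\frac{d}{dt}\mathbf S(\gamma,\dot\gamma)=\dot{\mathbf S}$, this becomes $\psi'\le K-\frac{h^2}{n-1}$. The $t^2$-weighted integration then gives $\Delta r\le\frac{n-1}{r}+\frac{Kr}{3}+\vartheta_0$. AM--GM plus the dilation $t=\lambda u$ give (\ref{doubvol}), even with $r_2^2-r_1^2$ in the exponent.

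The shape of the constant (exponent $n+1$, the combination $K+\vartheta_0^2$) reflects the more common route. By (\ref{weRicci3}) with $N=n+1$, ${\rm Ric}_{n+1}(v)={\rm Ric}_\infty(v)-\mathbf S(v)^2\ge-(K+\vartheta_0^2)F^2(v)$. The ${\rm Ric}_N$ Laplacian comparison then gives $\Delta r\le n\sqrt a\coth(\sqrt a\,r)\le\frac nr+\frac{(K+\vartheta_0^2)r}{3}$ with $a=\frac{K+\vartheta_0^2}{n}$, using $\coth x\le\frac1x+\frac x3$. This is exactly the bound you reach after AM--GM, and the rest coincides.

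That route is quicker if the $N$-weighted comparison theory is taken as known, and its $\coth$ bound is much better for large $r$. However, it needs the two-sided bound $|\mathbf S|\le\vartheta_0$, because $\mathbf S^2$ enters. Your route never leaves ${\rm Ric}_\infty$, absorbs $\dot{\mathbf S}$ exactly, and uses the S-curvature only through $-\mathbf S(\nabla r)\le\vartheta_0$ (plus local boundedness at $x_0$ for the boundary term). So it proves the lemma under the one-sided hypothesis $\mathbf S\ge-\vartheta_0$. In particular, under the assumption $\mathbf S\ge0$ of Theorem \ref{boma}, it gives (\ref{doubvol}) with $\vartheta_0$ replaced by $0$. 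That would remove the dependence of $\bar{\varepsilon}_0$ on $\vartheta_0$.
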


\section{A theorem of Bonnet-Myers type} \label{bon-May}
In this section, we will derive a theorem of Bonnet-Myers type on Finsler metric measure manifolds with integral weighted Ricci curvature bounds. First of all, we give the following fundamental theorem.

\begin{thm}\label{myers}
Let $(M, F, m)$ be an $n$-dimensional $(n \geq 2)$  forward complete connected Finsler metric measure manifold with $\Lambda_{F}<\infty$. Assume that $\mathbf{S}\geq -\vartheta$ for some $\vartheta \geq 0$. Let $R_{0}>\Lambda_{F}\pi$ be a constant. Given $p \in (\frac{n}{2}, n]$, $\alpha\in [\frac{1}{2},1)$. Then there exist nonnegative constant $\varepsilon =\varepsilon(n,p,\Lambda_{F},\vartheta,\alpha,R_{0})$ and positive constants $C_{3}=C_{3}(n,p,\Lambda_{F},\vartheta,\alpha,R_{0})$ and $\beta =\beta(n,p,\vartheta,\alpha,R_{0})$ such that when
$$
\overline{\left\|{\rm{Ric}}_{\infty}^{\Lambda_{F}^{2}}\right\|}_{p, R_{0}, \vartheta}\leq \varepsilon ,
$$
we have
\be
{\rm{diam}}(M) \leq \Lambda_{F}^{-1}\left(\pi+C_{3}\overline{\left\|{\rm{Ric}}_{\infty}^{\Lambda_{F}^{2}}\right\|}_{p, R_{0}, \vartheta}^{\beta}\right) \leq \frac{\Lambda_{F}^{-1}(\Lambda_{F}+1)\pi}{2}, \label{diam}
\ee
where
\beqn
\varepsilon &:=&\min\Big\{\left(\frac{\pi}{6}\right)^{2-\frac{1}{p}}, C_{3}^{-\frac{1}{\beta}}\left(\frac{(\Lambda_{F}-1)\pi}{2}\right)^{\frac{1}{\beta}}, \left(\frac{1}{2}\right)^{\frac{2p-1}{p}} D_{1}(n,p,\vartheta,R_{0})^{-\frac{2p-1}{p}}\Big\},\\
C_{3} &:=&\max\Big\{\left(32 e^{\frac{\vartheta R_{0}}{2 p-1}} \Lambda_{F}^{\left(3\vartheta R_{0} b(p,\alpha)+\frac{9n-2p}{2p-1}\right)} D_{2}\right)^{\frac{2p-1}{(2p-1)\vartheta R_{0} b(p,\alpha)+4n-2p}} R_{0},\\
&&\ \ \ \ \ \ \left(2^{8p-3}C_{1}e^{2\vartheta R_{0}}\Lambda_{F}^{3(2p-1)\vartheta R_{0} b(p,\alpha)+8n-1} \right)^{\frac{1}{(2p-1)\vartheta R_{0}b(p,\alpha)+3n-1}} R_{0}^{2},\\
&&\ \ \ \ \ \ \left(2\Lambda_{F}^{-\frac{2p-n}{2p-1}}D_{2}(n,p,\Lambda_{F},\vartheta,\alpha,R_{0})\right)^{-\frac{2p-1}{2p-n}} R_{0}\Big\},\\
\beta &:=& \min\left\{\frac{p}{(2p-1)\vartheta R_{0} b(p,\alpha)+4n-2p}, \ \frac{(n-1)p}{(2p-1)\left[(2p-1)\vartheta R_{0}b(p,\alpha)+3n-1\right]}\right\}.
\eeqn
Furthermore, $M$ is in fact compact.
\end{thm}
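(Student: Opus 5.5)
We argue by contradiction, following the Aubry / Li--Wu--Zheng strategy adapted to the irreversible setting. Write $\delta:=\overline{\left\|{\rm{Ric}}_{\infty}^{\Lambda_{F}^{2}}\right\|}_{p, R_{0}, \vartheta}$ and $\ell:=\Lambda_F^{-1}(\pi+C_3\delta^\beta)$. Note first that the second inequality in (\ref{diam}) is immediate from $\delta\le C_3^{-1/\beta}((\Lambda_F-1)\pi/2)^{1/\beta}$, so only the first inequality needs work. Suppose there are points $x_0,y\in M$ with $d_F(x_0,y)>\ell$. Since $\ell\le R_0/\Lambda_F$ (because $R_0>\Lambda_F\pi$ and $\delta$ is small), we may move along a minimizing geodesic from $x_0$ to $y$ and assume $d_F(x_0,x)=\ell$ exactly for some point $x$; the ball around $x$ will be the ``far'' ball.

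The first key step is an \emph{upper} bound on the measure of the annular region beyond distance $\pi/\Lambda_F$ from $x_0$. Integrating Lemma \ref{S(r)} over $r\in[\pi/\Lambda_F,\ell+\rho]$ gives, with $\epsilon=\delta^{p/(2p-1)}$,
\[
m\big(B^+_{\ell+\rho}(x_0)\setminus B^+_{\pi/\Lambda_F}(x_0)\big)\le C_1e^{\vartheta R_0}(\ell+\rho-\pi/\Lambda_F)\,\epsilon^{n-1}m(B^+_{R_0}(x_0)).
\]
Here $\rho>0$ is a small radius, to be fixed below as a multiple of $C_3\delta^\beta$. Because $(\ell+\rho-\pi/\Lambda_F)$ is a power of $\delta$ times constants, this mass is of order $\delta^{(n-1)p/(2p-1)+\beta}$ relative to $m(B^+_{R_0}(x_0))$.

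The second key step is a \emph{lower} bound. Choose $r$ with $\Lambda_F^{-1}r$ of size comparable to $C_3\delta^\beta/\Lambda_F$, e.g.\ $\Lambda_F^{-1}r=\tfrac12(\ell-\pi/\Lambda_F)$ (up to reversibility factors), so that $B^+_{\Lambda_F^{-1}r}(x)$ lies inside the annulus above. This containment uses the triangle inequality with $d_F(z,x)\le\Lambda_F d_F(x,z)$. The hypothesis $d_F(x_0,x)\le\Lambda_F^{-1}(R_0-r)$ holds, and $\delta\le(\tfrac12)^{(2p-1)/p}D_1^{-(2p-1)/p}$ gives $1-D_1\delta^{p/(2p-1)}\ge\tfrac12$. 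Theorem \ref{noncon}, applied with $\overline{\|{\rm Ric}^0_\infty\|}\le\delta$ by Remark \ref{epsilon}, then yields
\[
\Big(\frac{m(B^+_{\Lambda_F^{-1}r}(x))}{m(B^+_{R_0}(x_0))}\Big)^{\frac1{2p-1}}\ge\Big[\tfrac18 e^{-\frac{\vartheta R_0}{2p-1}}\Lambda_F^{-(\cdots)}\big(\tfrac{r}{R_0}\big)^{\vartheta R_0b+\frac{4n-2p}{2p-1}}-2D_2\delta^{\frac p{2p-1}}\Big]\big(\tfrac{r}{R_0}\big)^{\frac{2p-n}{2p-1}}.
\]
The first form of $C_3$ in its definition is chosen exactly so that, with $r\sim C_3\delta^\beta$ and $\beta\le p/((2p-1)\vartheta R_0b+4n-2p)$, the negative term is at most half the positive term. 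This leaves a lower bound of order $r^{\,\vartheta R_0 b+(4n-2p)/(2p-1)+(2p-n)/(2p-1)}$ after raising to the $(2p-1)$ power.

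Comparing the two bounds gives the contradiction. The second term in the definition of $C_3$, together with the second expression in $\beta$, ensures that the lower bound strictly exceeds the upper bound from Lemma \ref{S(r)}; the third term in $C_3$ keeps $r\le R_0$. Verifying these exponent and constant bookkeeping inequalities is the main obstacle, along with the precise placement of the ball inside the annulus in the irreversible setting. I would first check the exponents carefully, treating $\delta^\beta$ as the small parameter, and only then match constants. The contradiction gives ${\rm diam}(M)\le\ell$.

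Finally, for compactness: $M$ is forward complete and has bounded diameter, so $M=\overline{B^+_{\ell}(x_0)}$ is compact by Hopf--Rinow.
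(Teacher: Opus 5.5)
Your architecture is the paper's: argue by contradiction, bound from above the measure of the region just beyond radius $\Lambda_F^{-1}\pi$ via Lemma \ref{S(r)}, and bound from below a ball inside that region via Theorem \ref{noncon}. There is a genuine gap, however: the way you place the test ball discards the only slack the argument has, and then the argument fails.

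Write $\eta$ for the norm (your $\delta$) and $\xi:=\Lambda_F^{-1}C_3\eta^{\beta}=\ell-\Lambda_F^{-1}\pi$. You put $x$ at distance exactly $\ell$ and take a test radius proportional to $\xi$.
\begin{itemize}
\item If $\eta=0$, then $\xi=0$, your ball is empty, and nothing is contradicted. Yet the theorem asserts ${\rm diam}(M)\le\Lambda_F^{-1}\pi$ in that case.
\item This is not a marginal case. For reversible $F$ the second entry in the definition of $\varepsilon$ is $0$, so $\varepsilon=0$ and the hypothesis forces $\eta=0$. For reversible metrics your proof therefore proves nothing.
\item For $\eta>0$, a radius of order $\xi$ sits exactly at the threshold encoded by $C_3$ and $\beta$. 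So the claim that the lower bound strictly exceeds the upper bound needs a strict step, which you never identify.
\item Your choice $\Lambda_F^{-1}r=\tfrac12(\ell-\Lambda_F^{-1}\pi)$ gives $r=\tfrac{\Lambda_F}{2}\xi$. But keeping $B^+_{\Lambda_F^{-1}r}(x)$ outside $B^+_{\Lambda_F^{-1}\pi}(x_0)$ requires $r\le\xi$, because $d_F(z,x)\le\Lambda_F d_F(x,z)<r$. This fails for $\Lambda_F>2$.
\item Shrinking $r$ to a fixed fraction $c\,\xi$ instead multiplies the leading term by $c^{\vartheta R_0 b(p,\alpha)+\frac{4n-2p}{2p-1}}$. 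The first entry of $C_3$ does not absorb this factor, so the bracket in Theorem \ref{noncon} can become negative and the lower bound vacuous.
\end{itemize}

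The missing idea is to exploit the strict inequality $d_F(x_0,x)>\ell$. The paper picks $\delta$ with
$$\xi<\delta<\min\{\Lambda_F^{-1}(R_0-\pi)/2,\ d_F(x_0,x)-\Lambda_F^{-1}\pi\},$$
a point $x'$ with $d_F(x_0,x')=\Lambda_F^{-1}\pi+\delta$, and the ball
$$B^+_{\Lambda_F^{-1}\delta}(x')\subset B^+_{\Lambda_F^{-1}\pi+2\delta}(x_0)\setminus B^+_{\Lambda_F^{-1}\pi}(x_0).$$
It then applies Theorem \ref{noncon} with $r=\delta$, i.e. the radius parameter equals the offset of the centre beyond $\Lambda_F^{-1}\pi$, which is exactly what the reversibility bookkeeping allows. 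Two cases follow:
\begin{itemize}
\item If the leading term of the lower bound is at most $32D_2\eta^{p/(2p-1)}$, the first entry of $C_3$ gives $\delta\le\xi$ directly.
\item Otherwise, comparing (\ref{volm}) with (\ref{vol3}) gives $\delta\le\xi$ through the second entry of $C_3$ and the second expression in $\beta$.
\end{itemize}
Either way $\delta>\xi$ is contradicted, and this works verbatim when $\eta=0$.

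A minor point: your remark that the third entry of $C_3$ ``keeps $r\le R_0$'' is off, since enlarging $C_3$ cannot shrink the radius. In the paper, $r=\delta<R_0$ and the hypothesis $d_F(x_0,x')\le\Lambda_F^{-1}(R_0-\delta)$ of Theorem \ref{noncon} both come from $\delta<\Lambda_F^{-1}(R_0-\pi)/2$.
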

\begin{proof}
We will give the proof of (\ref{diam}) by contradiction. Suppose that for constants $\varepsilon$, $C_{3}$ and $\beta$ defined as above, when $\overline{\left\|{\rm{Ric}}_{\infty}^{\Lambda_{F}^{2}}\right\|}_{p, R_{0}, \vartheta}\leq \varepsilon$, there exist $x_{0}$, $x \in M$ such that
\beq\label{d(x0,x)}
d_{F}(x_{0}, x) > \Lambda_{F}^{-1}\left(\pi+C_{3}\overline{\left\|{\rm{Ric}}_{\infty}^{\Lambda_{F}^{2}}\right\|}_{p, R_{0}, \vartheta}^{\beta}\right).
\eeq
For convenience, define $\xi:=\Lambda_{F}^{-1}C_{3}\overline{\left\|{\rm{Ric}}_{\infty}^{\Lambda_{F}^{2}}\right\|}_{p, R_{0}, \vartheta}^{\beta}$. By the assumption and the definition of $\varepsilon$, it is not difficult to see that $\xi<\frac{\Lambda_{F}^{-1}(R_{0}-\pi)}{2}$. Besides, from (\ref{d(x0,x)}), we can see that $\xi < d_{F}(x_{0}, x)-\Lambda_{F}^{-1}\pi$. Then we can choose a constant $\delta$ such that $\delta\in \left(\xi, \ \min\left\{\frac{\Lambda_{F}^{-1}(R_{0}-\pi)}{2}, \ d_{F}(x_{0},x)-\Lambda_{F}^{-1}\pi\right\}\right)$.
In the following, we will derive contradictions from (\ref{d(x0,x)}).

Since $d_{F}(x_{0},x)>\Lambda_{F}^{-1}\pi +\delta$ and $M$ is connected, there exists $x^{\prime}\in M$ such that $d_{F}(x_{0},x^{\prime})=\Lambda_{F}^{-1}\pi+\delta$. Further,  it is easy to show that
\be
B^{+}_{\Lambda_{F}^{-1}\delta}(x^{\prime})\subset B^{+}_{\Lambda_{F}^{-1}(\pi+\delta)+\delta}(x_{0})\backslash B^{+}_{\Lambda_{F}^{-1}\pi}(x_{0}) \subset B_{\Lambda_{F}^{-1}\pi+2\delta}^{+}(x_{0})\backslash B^{+}_{\Lambda_{F}^{-1}\pi}(x_{0}). \label{ball0}
\ee
By $2\delta <\Lambda_{F}^{-1}(R_{0}-\pi)$, we can see that  $\Lambda_{F}^{-1}\pi+2\delta < \Lambda_{F}^{-1}R_{0}$. Moreover, by the assumption, we have $\overline{\left\|{\rm{Ric}}_{\infty}^{\Lambda_{F}^{2}}\right\|}_{p, R_{0}, \vartheta}(x_{0}) \leq \varepsilon \leq \left(\frac{\pi}{6}\right)^{2-\frac{1}{p}}$. Hence, from (\ref{ball0}) and Lemma \ref{S(r)}, we have
\be\label{vol3}
m(B^{+}_{\Lambda_{F}^{-1}\delta}(x^{\prime})) \leq \int_{\Lambda_{F}^{-1}\pi}^{\Lambda_{F}^{-1}\pi+2\delta } A(r)dr \leq  2 \delta C_{1} e^{\vartheta R_{0}} \overline{\left\|{\rm{Ric}}_{\infty}^{\Lambda_{F}^{2}}\right\|}_{p, R_{0}, \vartheta}^{\frac{(n-1)p}{2p-1}}(x_{0}) \, m(B_{R_{0}}^{+}(x_{0})),
\ee
where $C_{1}=C_{1}(n,p,\Lambda_{F},R_{0})$ is the constant given in Lemma \ref{S(r)}.

On the other hand,  observe that $\Lambda_{F} d_{F}(x_{0}, x^{\prime})+\delta= \pi+(\Lambda_{F}+1)\delta  \leq \pi +2\Lambda_{F}\delta  \leq R_{0}$, that is, $d_{F}(x_{0}, x^{\prime})\leq \Lambda_{F}^{-1}(R_{0}-\delta)$.  It follows from Theorem \ref{noncon} that
\beqn
\frac{m\left(B_{\Lambda_{F}^{-1}\delta}^{+}(x^{\prime})\right)}{m\left(B_{R_{0}}^{+}(x_{0})\right)}
&\geq & \Big[\frac{1}{4}\left(1-D_{1}\overline{\left\|{\rm{Ric}}_{\infty}^{0}\right\|}_{p, R_{0}, \vartheta}^{\frac{p}{2p-1}}\right) e^{-\frac{\vartheta R_{0}}{2 p-1}}\Lambda_{F}^{-\left(2 \vartheta R_{0}b(p,\alpha)+\frac{5n}{2p-1}\right)} \left(\frac{\delta}{R_{0}}\right)^{\left(\vartheta R_{0} b(p,\alpha)+\frac{4n-2p}{2p-1}\right)} \\
&& -\frac{D_{2} \overline{\left\|{\rm{Ric}}_{\infty}^{0}\right\|}_{p, R_{0}, \vartheta}^{\frac{p}{2p-1}}}{1-D_{1} \overline{\left\|{\rm{Ric}}_{\infty}^{0}\right\|}_{p, R_{0}, \vartheta}^{\frac{p}{2p-1}}} \Big]^{2p-1} \left(\frac{\delta}{R_{0}}\right)^{2p-n}.
\eeqn
Since $\overline{\left\|{\rm{Ric}}_{\infty}^{0}\right\|}_{p, R_{0}, \vartheta}\leq\overline{\left\|{\rm{Ric}}_{\infty}^{\Lambda_{F}^{2}}\right\|}_{p, R_{0}, \vartheta}\leq \varepsilon \leq \left(\frac{1}{2}\right)^{\frac{2p-1}{p}} D_{1}^{-\frac{2p-1}{p}}$,
we have
$$
1-D_{1}\overline{\left\|{\rm{Ric}}_{\infty}^{0}\right\|}_{p, R_{0}, \vartheta}^{\frac{p}{2p-1}}\geq \frac{1}{2}.
$$
Hence
\beq
&&\frac{m\left(B^{+}_{\Lambda_{F}^{-1}\delta}(x^{\prime})\right)}{m(B^{+}_{R_{0}}(x_{0}))}
\geq  \left[\frac{1}{8}\right. e^{-\frac{\vartheta R_{0}}{2 p-1}}\Lambda_{F}^{-\left(2 \vartheta R_{0}b(p,\alpha)+\frac{5n}{2p-1}\right)} \left(\frac{\delta}{R_{0}}\right)^{\vartheta R_{0} b(p,\alpha)+\frac{4n-2p}{2p-1}}\nonumber \\
&& \ \ \ \ \ \ \ \ \ \ \ \ \ \ \ \ \ \ \ \ \ \left.- 2D_{2}\overline{\left\|{\rm{Ric}}_{\infty}^{0}\right\|}_{p, R_{0}, \vartheta}^{\frac{p}{2p-1}}\right]^{2 p-1} \left(\frac{\delta}{R_{0}}\right)^{2p-n} \nonumber\\
&&\geq \left[\frac{1}{8} e^{-\frac{\vartheta R_{0}}{2 p-1}}\Lambda_{F}^{-\left(2 \vartheta R_{0}b(p,\alpha)+\frac{5n}{2p-1}\right)} \left(\frac{\delta}{R_{0}}\right)^{\vartheta R_{0} b(p,\alpha)+\frac{4n-2p}{2p-1}}- 2 D_{2}\overline{\left\|{\rm{Ric}}_{\infty}^{\Lambda_{F}^{2}}\right\|}_{p, R_{0}, \vartheta}^{\frac{p}{2p-1}}\right]^{2 p-1} \left(\frac{\delta}{R_{0}}\right)^{2p-n}.  \label{ball3}
\eeq

Now, by the definitions of $C_{3}, \ \beta$ and $\xi$, we will derive contradictions by two cases.
\ben
\item[{\rm (1)}] Assume that
$$
e^{-\frac{\vartheta R_{0}}{2 p-1}}\Lambda_{F}^{-\left(2\vartheta R_{0} b(p,\alpha)+\frac{5n}{2p-1}\right)} \left(\frac{\delta}{R_{0}}\right)^{\vartheta R_{0} b(p,\alpha)+\frac{4n-2p}{2p-1}} \leq 32 D_{2} \overline{\left\|{\rm{Ric}}_{\infty}^{\Lambda_{F}^{2}}\right\|}_{p, R_{0}, \vartheta}^{\frac{p}{2p-1}}.
$$
In this case, we can get
\beqn
\delta
&\leq& \Lambda_{F}^{-1}\left(32 e^{\frac{\vartheta R_{0}}{2 p-1}} \Lambda_{F}^{\left(3\vartheta R_{0} b(p,\alpha)+\frac{9n-2p}{2p-1}\right)} D_{2}\right)^{\frac{2p-1}{(2p-1)\vartheta R_{0} b(p,\alpha)+4n-2p}}R_{0} \overline{\left\|{\rm{Ric}}_{\infty}^{\Lambda_{F}^{2}}\right\|}_{p, R_{0}, \vartheta}^{\frac{p}{(2p-1)\vartheta R_{0} b(p,\alpha)+4n-2p}}\\
&\leq & \Lambda_{F}^{-1} C_{3} \overline{\left\|{\rm{Ric}}_{\infty}^{\Lambda_{F}^{2}}\right\|}_{p, R_{0}, \vartheta}^{\beta}= \xi.
\eeqn
This contradicts the choice of $\delta$.

\item[{\rm (2)}] Assume that
$$
e^{-\frac{\vartheta R_{0}}{2 p-1}}\Lambda_{F}^{-\left(2\vartheta R_{0} b(p,\alpha)+\frac{5n}{2p-1}\right)} \left(\frac{\delta}{R_{0}}\right)^{\vartheta R_{0} b(p,\alpha)+\frac{4n-2p}{2p-1}}
> 32D_{2} \overline{\left\|{\rm{Ric}}_{\infty}^{\Lambda_{F}^{2}}\right\|}_{p, R_{0}, \vartheta}^{\frac{p}{2p-1}}.
$$
In this case, (\ref{ball3}) implies that
\be
\frac{m\left(B^{+}_{\Lambda_{F}^{-1}\delta}(x^{\prime})\right)}{m(B^{+}_{R_{0}}(x_{0}))}
\geq \left(\frac{1}{16}e^{-\frac{\vartheta R_{0}}{2 p-1}}\Lambda_{F}^{-\left(2\vartheta R_{0} b(p,\alpha)+\frac{5n}{2p-1}\right)}\right)^{2p-1}  \left(\frac{\delta}{R_{0}}\right)^{(2p-1)\vartheta R_{0} b(p,\alpha)+3n}.  \label{volm}
\ee
Then combining (\ref{volm}) with (\ref{vol3}) yields
\beq
\delta &<& \Lambda_{F}^{-1} \left(2^{8p-3}C_{1}e^{2\vartheta R_{0}}\Lambda_{F}^{3(2p-1)\vartheta R_{0}b(p,\alpha)+8n-1} \right)^{\frac{1}{(2p-1)\vartheta R_{0}b(p,\alpha)+3n-1}} R_{0}^{\frac{(2p-1)\vartheta R_{0}b(p,\alpha)+3n}{(2p-1)\vartheta R_{0}b(p,\alpha)+3n-1}} \nonumber\\
&& \times\overline{\left\|{\rm{Ric}}_{\infty}^{\Lambda_{F}^{2}}\right\|}_{p, R_{0}, \vartheta}^{\frac{(n-1)p}{(2p-1)[(2p-1)\vartheta R_{0}b(p,\alpha)+3n-1]}}\nonumber\\
&<& \Lambda_{F}^{-1} \left(2^{8p-3}C_{1}e^{2\vartheta R_{0}}\Lambda_{F}^{3(2p-1)\vartheta R_{0}b(p,\alpha)+8n-1} \right)^{\frac{1}{(2p-1)\vartheta R_{0}b(p,\alpha)+3n-1}} R_{0}^{2} \ \overline{\left\|{\rm{Ric}}_{\infty}^{\Lambda_{F}^{2}}\right\|}_{p, R_{0}, \vartheta}^{\beta}  \nonumber\\
&\leq & \Lambda_{F}^{-1} C_{3} \overline{\left\|{\rm{Ric}}_{\infty}^{\Lambda_{F}^{2}}\right\|}_{p, R_{0}, \vartheta}^{\beta}= \xi. \label{delta1}
\eeq
This also contradicts the choice of $\delta$.
\een

In summary, we have proved that, when $\overline{\left\|{\rm{Ric}}_{\infty}^{\Lambda_{F}^{2}}\right\|}_{p, R_{0}, \vartheta}\leq \varepsilon$, one has
$$
d_{F}(x_{0},x) \leq \Lambda_{F}^{-1}\left(\pi+C_{3} \overline{\left\|{\rm{Ric}}_{\infty}^{\Lambda_{F}^{2}}\right\|}_{p, R_{0}, \vartheta}^{\beta}\right)\leq \frac{\Lambda_{F}^{-1}(\Lambda_{F}+1)\pi}{2}
$$
for any $x_{0}$ and $x\in M$. This proves (\ref{diam}).

Further, since $(M, F, m)$ is forward complete and  we have just shown that it is forwardly bounded from the above, $M$ is compact from the Hopf-Rinow theorem (\cite{BaoChernShen}, Theorem 6.6.1).  This completes the proof.
\end{proof}

\vskip 2mm

For a Finsler metric measure manifold $(M, F, m)$, let $\widetilde{M}$ denote a covering space of $M$ and let $f: \widetilde{M} \rightarrow M$ be the covering mapping with deck transformation group $\Gamma$. $\Omega \subset \widetilde{M}$ is called a fundamental domain of $\widetilde{M}$ if $f(\bar{\Omega})=M$ and $\rho(\Omega) \cap \Omega=\emptyset$ for all $\rho \in \Gamma-\{1\}$. If $\Omega$ is a fundamental domain, then
$$
\bigcup_{\rho \in \Gamma} \rho(\bar{\Omega})=\widetilde{M},\left.\quad f\right|_{\rho(\Omega)}: \rho(\Omega) \rightarrow f(\Omega) \text{ is a homeomorphism for any}  \ \rho \in \Gamma.
$$
If $(M, F)$ is forward complete, we can always get a fundamental domain of $\widetilde{M}$ (see \cite{Grov, Zhao}).

For any $\tilde{x} \in \widetilde{M}$, let $x= f(\tilde{x})\in M$. Then there exists an admissible open set $U \subseteq M$ containing $x$ such that $f^{-1}(U)$ is a disjoint union of open sets in $\widetilde{M}$, each of which is mapped homeomorphically onto $U$ by $f$. Then $\tilde{x}$ must lie in one of these open sets, denoted by  $\widetilde{U}$. Without loss of generality, assume that $U$ is a local coordinate neighborhood  of $x$ with homeomorphism $\phi : U \rightarrow \phi (U)\subset \mathbb{R}^{n}$. Thus  $\widetilde{U}$ is a local coordinate neighborhood  of $\tilde{x}$ with homeomorphism $\phi \circ f: \widetilde{U} \rightarrow \phi (U)\subset \mathbb{R}^{n}$. Naturally, define the pull-back  $\widetilde{F}:= f^{*}F$ of $F$ by $\widetilde{F}(\tilde{x},\tilde{y}):= F(f(\tilde{x}),df_{\tilde{x}}(\tilde{y}))$ for all $\tilde{x}\in\widetilde{M}$, $\tilde{y}\in T_{\tilde{x}}\widetilde{M}\setminus\{0\}$, where $df_{\tilde{x}}: T_{\tilde{x}}\widetilde{M}\to T_{x}M$ is the linear isomorphism induced by $f$ between the tangent spaces. Further, equip a pull-back measure $\widetilde{m}$ for $\widetilde{M}$ satisfying  $d\widetilde{m}|_{\widetilde{U}}:= (f|_{\widetilde{U}})^{*}(dm|_{U})$ for all such $\widetilde{U}$ and $U$. Then $(\widetilde{M}, \widetilde{F}, \widetilde{m})$ is a Finsler metric measure manifold. In the following, we adopt $\widetilde{\cdot}$ to denote the corresponding quantities on Finsler manifold $(\widetilde{M}, \widetilde{F}, \widetilde{m})$. Based on the discussions as above,
we know that $\widetilde{\mathbf{S}}(\tilde{x},\tilde{y})=\mathbf{S}(f(\tilde{x}),df_{\tilde{x}}(\tilde{y}))$,  $\widetilde{\rm{Ric}}(\tilde{x},\tilde{y})={\rm{Ric}}(f(\tilde{x}),df_{\tilde{x}}(\tilde{y}))$,   $\widetilde{\rm{Ric}}_{\infty}(\tilde{x},\tilde{y})={\rm{Ric}}_{\infty}(f(\tilde{x}),df_{\tilde{x}}(\tilde{y}))$.

Now we are in the position to prove the following theorem.

\begin{thm}\label{boma}
Let $(M, F, m)$ be an $n$-dimensional $(n\geq 2)$  forward complete connected Finsler metric measure manifold with $\Lambda_{F}<\infty$. Assume that $\mathbf{S}\geq 0$. Let $R_{0}>0$ be a constant.
Given $p\in (\frac{n}{2}, n]$, $\alpha\in [\frac{1}{2},1)$. Then we can get the following results.
\ben
\item[{\rm (1)}] There exists a nonnegative constant $\varepsilon_{0}=\varepsilon_{0}(n,p,\Lambda_{F},\alpha,R_{0})$ such that when $\overline{\left\|{\rm{Ric}}_{\infty}^{\Lambda_{F}^{2}}\right\|}_{p, R_{0}, 0}\leq \varepsilon_{0}$,
we have
\be
{\rm{diam}}(M)\leq \frac{\Lambda_{F}^{-1}(\Lambda_{F}+1)\pi}{2}. \label{diaest-2}
\ee
In particular, $M$ is compact.

\item[{\rm (2)}]  If $(M, F, m)$ further satisfies ${\rm Ric}_{\infty}\geq -K$  for some $K\geq 0$, there exists a nonnegative constant $\bar{\varepsilon}_{0}= \bar{\varepsilon}_{0}(n,p,\Lambda_{F}, \alpha, R_{0}, K, \vartheta_{0})$ such that when $\overline{\left\|{\rm{Ric}}_{\infty}^{\Lambda_{F}^{2}}\right\|}_{p, R_{0}, 0}\leq \bar{\varepsilon}_{0}$,  $\pi_{1}(M)$ is finite.
\een
\end{thm}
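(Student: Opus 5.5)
Part (1) follows by applying Theorem \ref{myers} with $\vartheta=0$. The only issue is that Theorem \ref{myers} requires $R_{0}>\Lambda_{F}\pi$, while here $R_{0}>0$ is arbitrary.

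If $R_{0}>\Lambda_{F}\pi$, we simply set $\varepsilon_{0}:=\varepsilon(n,p,\Lambda_{F},0,\alpha,R_{0})$. The second inequality in (\ref{diam}) then gives (\ref{diaest-2}), and compactness follows from the Hopf--Rinow theorem, exactly as at the end of the proof of Theorem \ref{myers}. Note that with $\vartheta=0$ the constants $D_{1},D_{2},C_{1},C_{3},\beta$ depend only on $(n,p,\Lambda_{F},\alpha,R_{0})$, as the statement requires.

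If $R_{0}\le \Lambda_{F}\pi$, fix $R_{1}:=2\Lambda_{F}\pi>R_{0}$. We compare the normalized norms at scales $R_{0}$ and $R_{1}$ using (\ref{ric<}) with the roles reversed. Since $\mathbf{S}\ge 0$ and the integrand has no exponential weight, $\|\cdot\|_{p,R,0}(x)$ is subadditive over annuli. Cover $B_{R_{1}}^{+}(x)$ by boundedly many balls of radius $R_{0}$, a number controlled by a doubling argument. Then Theorem \ref{BG}, or a Theorem \ref{vol}-type comparison at scale $R_{0}$ under smallness of $\overline{\|{\rm Ric}^{0}_{\infty}\|}_{p,R_{0},0}\le\overline{\|{\rm Ric}^{\Lambda_F^2}_{\infty}\|}_{p,R_{0},0}$, yields
\[
\overline{\|{\rm Ric}^{\Lambda_F^2}_\infty\|}_{p,R_{1},0}\le C(n,p,\Lambda_F,R_0)\,\overline{\|{\rm Ric}^{\Lambda_F^2}_\infty\|}_{p,R_{0},0}.
\]
This is the standard Petersen--Wei ``small at one scale implies small at all scales'' step, adapted to forward balls with $\Lambda_F$ factors. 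We then choose $\varepsilon_{0}$ so that the right-hand side is at most $\varepsilon(n,p,\Lambda_F,0,\alpha,R_{1})$ and apply the first case. I expect this scale change to be the main technical obstacle, because of irreversibility in the covering argument. If it proves too delicate, I would instead reprove Theorem \ref{myers} directly with an arbitrary $R_0$, rescaling the curvature bound $K$ in Lemma \ref{S(r)}.

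For part (2), pass to the universal cover $(\widetilde M,\widetilde F,\widetilde m)$. By the pull-back formulas, $\widetilde{\mathbf S}\ge0$, $\Lambda_{\widetilde F}=\Lambda_F$, $\widetilde{\rm Ric}_\infty\ge -K$, and $\widetilde M$ is forward complete. The main point is to bound $\overline{\|\widetilde{\rm Ric}^{\Lambda_F^2}_\infty\|}_{p,R_0,0}(\tilde x)$ by a constant multiple of the corresponding quantity on $M$. Since $f$ is a local isometry, it maps $B^+_{R_0}(\tilde x)$ onto $B^+_{R_0}(x)$ with multiplicity at most $N(x)$. Hence the numerator upstairs is at most the numerator downstairs. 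For the denominator, a fundamental domain argument shows that $\widetilde m(B^+_{R_0}(\tilde x))\ge m(B^+_{R_0}(x))/N$ is the wrong direction. Instead, I would compare $\widetilde m(B^{+}_{R_0}(\tilde x))$ from below with $m(B^+_{r}(x))$ for small $r$, using that $f$ is injective on $B^+_{r}(\tilde x)$ when $r$ is below half the systole, or more robustly via Lemma \ref{chfen}. Lemma \ref{chfen} bounds $m(B_{R_0})/m(B_{r})$ by $(R_0/r)^{n+1}e^{(K+\vartheta_0^2)R_0^2/6}$, where $\vartheta_0$ is finite because $M$ is compact by part (1). This is why $\bar\varepsilon_0$ depends on $K$ and $\vartheta_0$.

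With $\bar\varepsilon_0$ small enough, part (1) applies on $\widetilde M$, so $\widetilde M$ is compact. The deck group $\Gamma\cong\pi_1(M)$ acts freely on the compact space $\widetilde M$ with discrete orbits. Each orbit is therefore finite, so $\pi_1(M)$ is finite. Alternatively, $\widetilde m(\widetilde M)=|\Gamma|\,m(M)<\infty$ gives the same conclusion.
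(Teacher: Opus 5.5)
Your part (1) is essentially the paper's argument. You apply Theorem \ref{myers} with $\vartheta=0$ when $R_0>\Lambda_F\pi$. Otherwise you pass to scale $\bar R=2\Lambda_F\pi$ through a maximal disjoint family $\{B^+_{R_0/(5\Lambda_F)}(x_i)\}$ in $B^+_{\bar R}(x_0)$ whose enlargements $B^+_{R_0}(x_i)$ cover, together with Theorem \ref{BG} at $\Xi=5$.

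One adjustment: do not try to bound the \emph{number} of balls. That would need a doubling estimate up to scale $\bar R$, which smallness at scale $R_0$ does not directly give. Weight the balls instead. Write $\eta:=\overline{\|{\rm Ric}^{\Lambda_F^2}_\infty\|}_{p,R_0,0}$. For $\vartheta=0$ the norm is an honest integral over the ball, so $\int_{B^+_{\bar R}(x_0)}({\rm Ric}^{\Lambda_F^2}_\infty)^p\,dm\le\sum_i m(B^+_{R_0}(x_i))\,\eta^p\le 5^{n+1}\Lambda_F^n\sum_i m(B^+_{R_0/(5\Lambda_F)}(x_i))\,\eta^p\le 5^{n+1}\Lambda_F^n\,m(B^+_{\bar R}(x_0))\,\eta^p$.

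The genuine gap is in part (2). We have $\widetilde{\rm Ric}_\infty={\rm Ric}_\infty\circ f$, and $f$ maps $\widetilde B^+_{R_0}(\tilde x)$ onto $B^+_{R_0}(x)$ with multiplicity up to $N$. So the numerator upstairs is \emph{at least} the numerator downstairs, and in general is only bounded by $N$ times it. Your claim that it is at most the downstairs numerator is false.

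The denominator is not the problem in the way you describe: $\widetilde m(\widetilde B^+_{R_0}(\tilde x))\ge m(B^+_{R_0}(x))$ holds for free. What you need is a lower bound on $\widetilde m(\widetilde B^+_{R_0}(\tilde x))$ that grows linearly in $N$, and neither proposed fix supplies one. Comparing with $m(B^+_r(x))$ for $r$ below half the systole gives a bound weaker than the free one. It is independent of $N$, and $r$ is not controlled by $(n,p,\Lambda_F,\alpha,R_0,K,\vartheta_0)$. Using Lemma \ref{chfen} for a ratio $m(B_{R_0})/m(B_r)$ on $M$ does not see $N$ either. Since $N$ is essentially the number of deck translates meeting the ball, which is exactly what you are trying to control, the estimate does not close.

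The missing idea (the paper's) is to use part (1) first.
\begin{itemize}
\item For $R_0>\Lambda_F\pi$, Theorem \ref{myers} gives ${\rm diam}(M)<R_0$. Hence $B^+_{R_0}(x)=M$, and $\eta^p=\frac{1}{m(M)}\int_M({\rm Ric}^{\Lambda_F^2}_\infty)^p\,dm$ is a global average.
\item If $N$ translates $\rho_i(\Omega)$ of a fundamental domain cover $\widetilde B^+_{R_0}(\tilde x)$, they lie in $\widetilde B^+_{2R_0}(\tilde x)$ as in (\ref{volB}). Hence $\int_{\widetilde B^+_{R_0}(\tilde x)}(\widetilde{\rm Ric}^{\Lambda_F^2}_\infty)^p\,d\widetilde m\le N\int_M({\rm Ric}^{\Lambda_F^2}_\infty)^p\,dm$ and $N\,m(M)\le\widetilde m(\widetilde B^+_{2R_0}(\tilde x))$.
\item Now apply Lemma \ref{chfen} \emph{on $\widetilde M$}, which is forward complete with $\widetilde{\rm Ric}_\infty\ge-K$ and the same $\vartheta_0$. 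Use it as a doubling estimate $\widetilde m(\widetilde B^+_{2R_0})\le 2^{n+1}e^{2(K+\vartheta_0^2)R_0^2/3}\,\widetilde m(\widetilde B^+_{R_0})$.
\item This absorbs $N$ and yields $\overline{\|\widetilde{\rm Ric}^{\Lambda_F^2}_\infty\|}_{p,R_0,0}\le 2^{\frac{n+1}{p}}e^{\frac{2(K+\vartheta_0^2)}{3p}R_0^2}\,\eta$. This is where the dependence on $K$ and $\vartheta_0$ genuinely enters.
\item For $R_0\le\Lambda_F\pi$, do the same at scale $\bar R=2\Lambda_F\pi$ and compose with the scale change from part (1).
\end{itemize}
After that, your endgame (compact $\widetilde M$, hence finite $\Gamma\cong\pi_1(M)$) is fine.
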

\begin{proof} We will give the proof by two steps.

$\mathbf{Step\ 1.}$ \ Firstly, we derive the diameter bound of $M$ for any $R_{0}>0$.

If $R_{0}>\Lambda_{F}\pi$, the conclusion follows directly from Theorem \ref{myers}.

In the following, we will prove (\ref{diaest-2}) in the case $R_{0}\leq \Lambda_{F}\pi$.

Fix a constant $\bar{R}>\Lambda_{F}\pi$ and choose arbitrarily a point $x_{0}\in M$. Put
\[
{\cal B}:=\left\{\left.B_{\frac{R_0}{5 \Lambda_F}}^{+}(z) \right\rvert\, z \in B_{\bar{R}}^{+}(x_0 ) \ \text{and} \ B_{\frac{R_0}{5 \Lambda_F}}^{+}(z) \subset B_{\bar{R}}^{+}(x_0 )\right\}.
\]
It is obvious that ${\cal B}$ is nonempty. Further, let
$$
{\cal P}:=\left\{{\cal A} \subset {\cal B} \mid    B_{1} \cap B_{2} = \emptyset \ \text{for any} \  B_{1} \neq B_{2}\in {\cal A}\right\}.
$$
Then ${\cal P}$ is a nonempty partially ordered set and  every chain in ${\cal P}$ has upper bound.  By Zorn's Lemma, ${\cal P}$ has a maximal element denoted by
\[
{\cal A}_{*}= \Big\{B^{+}_{\frac{R_{0}}{5\Lambda_{F}}}(x_{i})\Big\}_{i\in I}.
\]
Here $I$ denotes the index set. ${\cal A}_{*}$ is a maximal pairwise disjoint family contained in $B^{+}_{\bar{R}}(x_{0})$.

Further, we claim that $\left\{B^{+}_{R_{0}}(x_{i})\right\}_{i\in I}$ actually covers $B^{+}_{\bar{R}}(x_{0})$.
Indeed, suppose that $\left\{B^{+}_{R_{0}}(x_{i})\right\}_{i\in I}$ is not a covering of $B^{+}_{\bar{R}}(x_{0})$. Then there exists a point $\bar{x}\in B^{+}_{\bar{R}}(x_{0})$ such that
$$\bar{x}\notin \bigcup\limits_{i\in I} B^{+}_{R_{0}}(x_{i}).$$
By the maximality of $\Big\{B^{+}_{\frac{R_{0}}{5\Lambda_{F}}}(x_{i})\Big\}_{i\in I}$, there exists some  $i_{0}\in I$ such that
$$B^{+}_{\frac{R_{0}}{5\Lambda_{F}}}(\bar{x})\cap B^{+}_{\frac{R_{0}}{5\Lambda_{F}}}(x_{i_{0}})\neq \emptyset.$$
Choose $z_{0}\in B^{+}_{\frac{R_{0}}{5\Lambda_{F}}}(\bar{x})\cap B^{+}_{\frac{R_{0}}{5\Lambda_{F}}}(x_{i_{0}})$. Then for any $z\in B^{+}_{\frac{R_{0}}{5\Lambda_{F}}}(\bar{x})$, we have
\beqn
d_{F}(x_{i_{0}},z) &\leq & d_{F}(x_{i_{0}},z_{0})+d_{F}(z_{0},\bar{x})+d_{F}(\bar{x},z) \\
&\leq & d_{F}(x_{i_{0}},z_{0})+\Lambda_{F}\cdot d_{F}(\bar{x},z_{0})+d_{F}(\bar{x},z)< R_{0}.
\eeqn
Thus, $B^{+}_{\frac{R_{0}}{5\Lambda_{F}}}(\bar{x})\subset B^{+}_{R_{0}}(x_{i_{0}})$, which contradicts the choice of $\bar{x}$. Therefore, the family $\left\{B^{+}_{R_{0}}(x_{i})\right\}_{i\in I}$ is a covering of $B^{+}_{\bar{R}}(x_{0})$, from which we have
\beq\label{est3}
\overline{\left\|{\rm{Ric}}^{\Lambda_{F}^{2}}_{\infty}\right\|}^{p}_{p,\bar{R},0}(x_{0})
&=&\frac{1}{m(B^{+}_{\bar{R}}(x_{0}))} \int^{\bar{R}}_{0}\int_{\mathcal{D}_{x_0}(r)}\left({\rm{Ric}}_{\infty}^{\Lambda_{F}^{2}}\right)^p \sigma(x_0, r, \theta) d r d \theta \nonumber\\
& \leq& \frac{1}{m(B^{+}_{\bar{R}}(x_{0}))} \sum_{i \in I} \int_0^{R_{0}} \int_{\mathcal{D}_{x_i}(r)}\left({\rm{Ric}}_{\infty}^{\Lambda_{F}^{2}}\right)^p \sigma\left(x_{i}, r, \theta\right) d r d \theta \nonumber\\
& =&\sum_{i \in I} \frac{m\left(B_{R_{0}}^{+}(x_i)\right)}{m\left(B_{\bar{R}}^{+}(x_0)\right)} \cdot \frac{1}{m\left(B_{R_{0}}^{+}(x_i)\right)} \int_0^{R_{0}} \int_{\mathcal{D}_{x_i}(r)}\left({\rm{Ric}}_{\infty}^{\Lambda_{F}^{2}}\right)^p \sigma\left(x_i, r, \theta\right) d r d \theta\nonumber\\
& =& \sum_{i \in I} \frac{m\left(B_{R_{0}}^{+}(x_i)\right)}{m\left(B_{\bar{R}}^{+}(x_0)\right)} \overline{\left\|{\rm{Ric}}^{\Lambda_{F}^{2}}_{\infty}\right\|}^{p}_{p,R_{0},0}(x_{i})\nonumber\\
& =&\sum_{i \in I} \frac{m\left(B_{R_{0}}^{+}(x_i)\right)} {m\Big(B_{\frac{R_{0}}{5\Lambda_{F}}}^{+}(x_i)\Big)} \cdot \frac{m\Big(B_{\frac{R_{0}}{5\Lambda_{F}}}^{+}(x_i)\Big)}{m\left(B_{\bar{R}}^{+}(x_0)\right)} \overline{\left\|{\rm{Ric}}^{\Lambda_{F}^{2}}_{\infty}\right\|}^{p}_{p,R_{0},0}(x_{i}).
\eeq

Now, let $\varepsilon_{1} =\varepsilon_{1}(n,p,R_{0})$ be the constant $\varsigma$ in Theorem \ref{BG} with $\Xi=5$.
Then, when $\overline{\left\|{\rm{Ric}}_{\infty}^{\Lambda_{F}^{2}}\right\|}_{p, R_{0}, 0}\leq \varepsilon_{1}$, we obtain
$$
\frac{m\left(B_{R_{0}}^{+}(x_i)\right)}{m\Big(B_{\frac{R_{0}}{5\Lambda_{F}}}^{+}(x_i)\Big)} \leq 5^{n+1}\Lambda_{F}^{n}.
$$
Substituting this into (\ref{est3}) yields
$$
\overline{\left\|{\rm{Ric}}_{\infty}^{\Lambda_{F}^{2}}\right\|}^p_{p, \bar{R}, 0} (x_{0})
\leq 5^{n+1} \Lambda_{F}^{n} \sum_{i \in I} \frac{m\Big(B_{\frac{R_0}{5\Lambda_{F}}}^{+}(x_{i})\Big)}{m\left(B_{\bar{R}}^{+}(x_{0})\right)} \ \overline{\left\|{\rm{Ric}}^{\Lambda_{F}^{2}}_{\infty}\right\|}_{p, R_{0}, 0}^p(x_{i})
\leq 5^{n+1} \Lambda_{F}^{n} \ \overline{\left\|{\rm{Ric}}^{\Lambda_{F}^{2}}_{\infty}\right\|}_{p, R_{0}, 0}^{p},
$$
where we have used the fact that $\sum\limits_{i\in I} m\Big(B^{+}_{\frac{R_0}{5\Lambda_{F}}}(x_{i})\Big) \leq m(B^{+}_{\bar{R}}(x_{0}))$ and (\ref{iwRM}) in the last inequality.
Thus, we have
\be\label{ric1}
\overline{\left\|{\rm{Ric}}_{\infty}^{\Lambda_{F}^{2}}\right\|}_{p, \bar{R}, 0}=\sup_{x\in M} \overline{\left\|{\rm{Ric}}_{\infty}^{\Lambda_{F}^{2}}\right\|}_{p, \bar{R}, 0}(x)
\leq 5^{\frac{n+1}{p}} \Lambda_{F}^{\frac{n}{p}} \overline{\left\|{\rm{Ric}}^{\Lambda_{F}^{2}}_{\infty}\right\|}_{p, R_{0}, 0}.
\ee

Next, choose $\bar{R}:=2\Lambda_{F}\pi$ and
$$
\varepsilon_{2}(n,p,\Lambda_{F},\alpha):= \frac{\varepsilon}{5^{\frac{n+1}{p}} \Lambda_{F}^{\frac{n}{p}}},
$$
where $\varepsilon = \varepsilon(n,p,\Lambda_{F}, 0, \alpha , \bar{R})$ is just the constant $\varepsilon$ in Theorem \ref{myers} with $\vartheta=0$.
Then, by (\ref{ric1}), when $\overline{\left\|{\rm{Ric}}_{\infty}^{\Lambda_{F}^{2}}\right\|}_{p, R_{0}, 0}\leq\varepsilon_{2}$, we have $\overline{\left\|{\rm{Ric}}_{\infty}^{\Lambda_{F}^{2}}\right\|}_{p, \bar{R}, 0}\leq \varepsilon$.
Consequently, it follows from Theorem \ref{myers} that
$$
{\rm{diam}}(M)\leq \frac{\Lambda_{F}^{-1}(\Lambda_{F}+1)\pi}{2}.
$$

In sum, for $0< R_{0} \leq \Lambda_{F}\pi$, let
$$
\varepsilon_{0}(n,p,\Lambda_{F},\alpha, R_{0}):=\min\{\varepsilon_{1}(n,p,R_{0}),\varepsilon_{2}(n,p,\Lambda_{F},\alpha)\}.
$$
Then, when $\overline{\left\|{\rm{Ric}}_{\infty}^{\Lambda_{F}^{2}}\right\|}_{p, R_{0}, 0}\leq \varepsilon_{0}$, we have (\ref{diaest-2}), and then, $M$ is compact.

$\mathbf{Step \ 2.}$ \   Now we will show that the universal covering space $\widetilde{M}$ of $M$ is compact.

We first claim that $(\widetilde{M},\widetilde{F})$ is forward complete. In fact, let $\tilde{\gamma}(t)$ be any geodesic of $(\widetilde{M},\widetilde{F})$ with $\tilde{\gamma}(0)=\tilde{x}_{0}\in\widetilde{M}$. Since $f$ is a local isometry, its projection $\gamma(t):=f\circ\tilde{\gamma}(t)$ is a geodesic of $(M,F)$ with $\gamma(0)=f(\tilde{x}_{0})=x_{0}\in M$. By the forward completeness of $(M,F)$, $\gamma(t)$ can be extended to a geodesic defined on $[0,\infty)$. The local isometry property of $f$ now implies that $\tilde{\gamma}(t)$ is also defined for all $t\in[0,\infty)$. Hence the universal cover $(\widetilde{M},\widetilde{F})$ is forward complete.

Next, we give the proof under two cases.

\par {\it Case 1}:  $R_{0}>\Lambda_{F}\pi$.  In this case, by Theorem \ref{myers}, there exists a nonnegative constant $\varepsilon = \varepsilon (n, p,\Lambda_{F}, 0, \alpha , R_{0})$, such that, when $\overline{\left\|{\rm{Ric}}_{\infty}^{\Lambda_{F}^{2}}\right\|}_{p, R_{0}, 0} \leq \varepsilon$,
we have ${\rm{diam}}(M)< R_{0}$. For our aim, we need to show that $\overline{\left\|\widetilde{{\rm{Ric}}}_{\infty}^{\Lambda_{F}^{2}}\right\|}_{p, R_{0}, 0}$ is controlled by $\overline{\left\|{\rm{Ric}}_{\infty}^{\Lambda_{F}^{2}}\right\|}_{p, R_{0}, 0}$.
In fact, for any point $\tilde{x}_{0}\in \widetilde{M}$, let $N$ be the minimal number of the fundamental domains $\rho(\Omega)$ covering $\widetilde{B}^{+}_{R_{0}}(\tilde{x}_{0})$, where $\widetilde{B}^{+}_{R_{0}}(\tilde{x}_{0})$ denotes the forward geodesic ball of radius $R_0$ with center at $\tilde{x}_{0}$ in $\widetilde{M}$. Then we have
\be
\widetilde{B}^{+}_{R_0}(\tilde{x}_0) \subset \bigcup^{N}_{i=1} \rho_{i}(\Omega) \subset \widetilde{B}_{2R_{0}}^{+}(\tilde{x}_0). \label{volB}
\ee
From (\ref{volB}), we can see that $\widetilde{m}(\widetilde{B}^{+}_{R_{0}}(\tilde{x}_{0}))\leq N\cdot m(M)\leq \widetilde{m}(\widetilde{B}^{+}_{2R_{0}}(\tilde{x}_{0}))$.
On the other hand,  by (\ref{doubvol}), we have
\be
\frac{\widetilde{m}\left(\widetilde{B}_{2R_0}^{+}(\tilde{x}_0)\right)}{\widetilde{m}\left(\widetilde{B}_{R_0}^{+}(\tilde{x}_0)\right)}\leq 2^{n+1}e^{\frac{2(K+ \vartheta_{0}^{2})}{3}R_{0}^2}. \label{volcom-3}
\ee
Then, from (\ref{volB}) and (\ref{volcom-3}),  we have the following
\beqn
\frac{1}{\widetilde{m}\left(\widetilde{B}_{R_0}^{+}(\tilde{x}_0)\right)} \int_{\widetilde{B}^{+}_{R_{0}}(\tilde{x}_{0})} \left(\widetilde{{\rm{Ric}}}_{\infty}^{\Lambda_{F}^{2}}\right)^{p} d \widetilde{m}
&\leq &
\frac{N}{\widetilde{m}\left(\widetilde{B}_{R_0}^{+}(\tilde{x}_0)\right)} \int_{M} \left({\rm{Ric}}_{\infty}^{\Lambda_{F}^{2}}\right)^{p} d m\\
&=& \frac{N}{\widetilde{m}\left(\widetilde{B}_{2R_0}^{+}(\tilde{x}_0)\right)} \cdot \frac{\widetilde{m}\left(\widetilde{B}_{2R_0}^{+}(\tilde{x}_0)\right)}{\widetilde{m}\left(\widetilde{B}_{R_0}^{+}(\tilde{x}_0)\right)}\int_{M} \left({\rm{Ric}}_{\infty}^{\Lambda_{F}^{2}}\right)^{p} d m\\
& \leq & \frac{2^{n+1} N e^{\frac{2(K+ \vartheta_{0}^{2})}{3}R_{0}^2}}{\widetilde{m}\left(\widetilde{B}_{2R_{0}}^{+}(\tilde{x}_0)\right)} \int_{M} \left({\rm{Ric}}_{\infty}^{\Lambda_{F}^{2}}\right)^p d m\\
& \leq & \frac{2^{n+1}e^{\frac{2(K+ \vartheta_{0}^{2})}{3}R_{0}^2}}{m(M)} \int_{M} \left({\rm{Ric}}_{\infty}^{\Lambda_{F}^{2}}\right)^p d m \\
& \leq & 2^{n+1}e^{\frac{2(K+ \vartheta_{0}^{2})}{3}R_{0}^2} \overline{\left\|{\rm{Ric}}^{\Lambda_{F}^{2}}_{\infty}\right\|}^{p}_{p, R_{0}, 0},
\eeqn
which implies that
\beq\label{epsilon2}
\overline{\left\|\widetilde{{\rm{Ric}}}^{\Lambda_{F}^{2}}_{\infty}\right\|}_{p, R_0, 0} \leq 2^{\frac{n+1}{p}} e^{\frac{2(K+ \vartheta_{0}^{2})}{3p}R_{0}^2} \overline{\left\|{\rm{Ric}}^{\Lambda_{F}^{2}}_{\infty}\right\|}_{p, R_0, 0}.
\eeq
Now, let $\bar{\varepsilon}_{0}:=\min \{\varepsilon, \ {2^{-\frac{n+1}{p}}}e^{-\frac{2(K+ \vartheta_{0}^{2})}{3p}R_{0}^2} {\varepsilon}\} =  {2^{-\frac{n+1}{p}}}e^{-\frac{2(K+ \vartheta_{0}^{2})}{3p}R_{0}^2} {\varepsilon}$, where $\varepsilon = \varepsilon(n,p,\Lambda_{F}, 0, \alpha , R_{0})$ is the constant $\varepsilon$ in Theorem \ref{myers} with $\vartheta=0$. It is not difficult to see that, when $\overline{\left\|{\rm{Ric}}^{\Lambda_{F}^{2}}_{\infty}\right\|}_{p, R_{0}, 0}\leq \bar{\varepsilon}_{0}$, we have $\overline{\left\|\widetilde{{\rm{Ric}}}^{\Lambda_{F}^{2}}_{\infty}\right\|}_{p, R_0, 0}\leq \varepsilon$. Hence compactness of $\widetilde{M}$ now follows from Theorem \ref{myers}.

\par {\it Case 2}:  $R_{0}\leq \Lambda_{F} \pi$.  In this case,  fix a constant $\bar{R}>\Lambda_{F}\pi$ and choose arbitrarily a point $\tilde{x}_{0}\in \widetilde{M}$ with $x_{0}=f(\tilde{x}_{0})\in M$. By Theorem \ref{myers}, there exists a nonnegative constant $\varepsilon = \varepsilon (n, p,\Lambda_{F}, 0, \alpha , \bar{R})$, such that, when $\overline{\left\|{\rm{Ric}}_{\infty}^{\Lambda_{F}^{2}}\right\|}_{p, \bar{R}, 0} \leq \varepsilon$,
we have ${\rm{diam}}(M)< \bar{R}$. Similar to the argument for (\ref{epsilon2}), we can obtain
\be
\overline{\left\|\widetilde{{\rm{Ric}}}^{\Lambda_{F}^{2}}_{\infty}\right\|}_{p, \bar{R}, 0} \leq 2^{\frac{n+1}{p}} e^{\frac{2(K+ \vartheta_{0}^{2})}{3p}\bar{R}^2} \overline{\left\|{\rm{Ric}}^{\Lambda_{F}^{2}}_{\infty}\right\|}_{p, \bar{R}, 0}.  \label{epsilon3}
\ee
On the other hand, following the argument for (\ref{est3}),  we have
\[
\overline{\left\|{\rm{Ric}}^{\Lambda_{F}^{2}}_{\infty}\right\|}^{p}_{p,\bar{R},0}(x_{0})\leq \sum_{i \in I} \frac{m\left(B_{R_{0}}^{+}(x_i)\right)} {m\Big(B_{\frac{R_{0}}{5\Lambda_{F}}}^{+}(x_i)\Big)} \cdot \frac{m\Big(B_{\frac{R_{0}}{5\Lambda_{F}}}^{+}(x_i)\Big)}{m\left(B_{\bar{R}}^{+}(x_0)\right)} \overline{\left\|{\rm{Ric}}^{\Lambda_{F}^{2}}_{\infty}\right\|}^{p}_{p,R_{0},0}(x_{i}).
\]
By Lemma \ref{chfen}, we obtain
\[
\frac{m\left(B_{R_{0}}^{+}(x_i)\right)}{m\Big(B_{\frac{R_{0}}{5\Lambda_{F}}}^{+}(x_i)\Big)} \leq (5\Lambda_{F})^{n+1}e^{\frac{K+\vartheta_{0}^{2}}{6}R_{0}^{2}}.
\]
Then we get
\[
\overline{\left\|{\rm{Ric}}_{\infty}^{\Lambda_{F}^{2}}\right\|}^p_{p, \bar{R}, 0} (x_{0})\leq (5 \Lambda_{F})^{n+1}  e^{\frac{K+\vartheta_{0}^{2}}{6}R_{0}^{2}} \ \overline{\left\|{\rm{Ric}}^{\Lambda_{F}^{2}}_{\infty}\right\|}_{p, R_{0}, 0}^{p},
\]
from which, we have
\be
\overline{\left\|{\rm{Ric}}_{\infty}^{\Lambda_{F}^{2}}\right\|}_{p, \bar{R}, 0} \leq (5 \Lambda_{F})^{\frac{n+1}{p}}  e^{\frac{K+\vartheta_{0}^{2}}{6p}R_{0}^{2}} \ \overline{\left\|{\rm{Ric}}^{\Lambda_{F}^{2}}_{\infty}\right\|}_{p, R_{0}, 0}. \label{ric2}
\ee
Now, from (\ref{epsilon3}) and (\ref{ric2}), we obtain the following
\be
\overline{\left\|\widetilde{{\rm{Ric}}}^{\Lambda_{F}^{2}}_{\infty}\right\|}_{p, \bar{R}, 0} \leq (10 \Lambda_{F})^{\frac{n+1}{p}} \ e^{\frac{2(K+ \vartheta_{0}^{2})}{3p}\bar{R}^2} \  e^{\frac{K+\vartheta_{0}^{2}}{6p}R_{0}^{2}} \ \overline{\left\|{\rm{Ric}}^{\Lambda_{F}^{2}}_{\infty}\right\|}_{p, R_{0}, 0}. \label{ric3}
\ee
In the  following, we take $\bar{R}=2\Lambda_{F}\pi$ and let
$$
\bar{\varepsilon}_{0}=\min \{\varepsilon , \, (10 \Lambda_{F})^{- \frac{n+1}{p}} \ e^{-\frac{8\Lambda_{F}^{2}\pi^{2}(K+ \vartheta_{0}^{2})}{3p}} \  e^{-\frac{K+\vartheta_{0}^{2}}{6p}R_{0}^{2}}\varepsilon\}= (10 \Lambda_{F})^{- \frac{n+1}{p}} \ e^{-\frac{8\Lambda_{F}^{2}\pi^{2}(K+ \vartheta_{0}^{2})}{3p}} \  e^{-\frac{K+\vartheta_{0}^{2}}{6p}R_{0}^{2}}\varepsilon .
$$
From (\ref{ric3}), it is obvious that, when $\overline{\left\|{\rm{Ric}}^{\Lambda_{F}^{2}}_{\infty}\right\|}_{p, R_{0}, 0}\leq \bar{\varepsilon}_{0}$, we have $\overline{\left\|\widetilde{{\rm{Ric}}}^{\Lambda_{F}^{2}}_{\infty}\right\|}_{p, \bar{R}, 0}\leq \varepsilon$. By Theorem \ref{myers} again,
we conclude that $\widetilde{M}$  is compact.

Finally, because of the compactness of $\widetilde{M}$, $\pi_{1}(M)$ is finite.
\end{proof}

\vskip 5mm

\noindent Xinyue Cheng \\
School of Mathematical Sciences \\
Chongqing Normal University \\
Chongqing, 401331, P.R. China\\
{\it E-mail address}: \ chengxy@cqnu.edu.cn

\vskip 3mm
\noindent Liulin Liu\\
School of Mathematical Sciences \\
Chongqing Normal University \\
Chongqing, 401331, P.R. China\\
{\it E-mail address}: \ Liuliulin99@126.com


\vskip.1in
\begin{thebibliography}{Ma}

\baselineskip 10pt
\bibitem{Aubry} Aubry, E.: Finiteness of $\pi_1$ and geometric inequalities in almost positive Ricci curvature. Ann. Sci. Ecole Norm. Sup., {\bf 40}, 675-695 (2007)

\bibitem{BaoChernShen} Bao, D., Chern, S.S., Shen, Z.: An Introduction to Riemann-Finsler Geometry. GTM 200, Springer, New York (2000)

\bibitem{ChF1} Cheng, X.,  Feng, Y.: Harnack inequality and the relevant theorems on Finsler metric measure manifolds. Results in Mathematics, {\bf 79},  166(2024)

\bibitem{ChF} Cheng, X., Feng, Y.: On Finsler metric measure manifolds with integral weighted Ricci curvature bounds. Sci. China Math., {\bf 69}, 461-486 (2026)

\bibitem{ChSh} Cheng, X., Shen, Z.: Some inequalities on Finsler manifolds with weighted Ricci curvature bounded below. Results Math., {\bf 77}, 70 (2022)

\bibitem{Chern} Chern, S.S.: Finsler geometry is just Riemannian geometry without the quadratic restriction. Notices Amer. Math. Soc., {\bf 43}(9), 959-963 (1996)

\bibitem{ChernShen}  Chern, S.S., Shen, Z.: Riemann-Finsler Geometry. Nankai Tracts in Mathematics, Vol. 6, Singapore: World Scientific, (2005)

\bibitem{Grov} Gromov M.:  Structures m\'{e}triques pour les vari\'{e}t\'{e}s riemanniennes. Paris: Cedic/Fernand Nathan (1981)

\bibitem{LiWuYu} Li, F., Wu, J., Zheng, Y.: Myers' type theorem for integral Bakry-\'{E}mery Ricci tensor bounds. Results Math., {\bf 76}, 32 (2021)

\bibitem{Myers} Myers, S.B.: Riemannian manifolds with positive mean curvature. Duke Math. J., {\bf 8}(2), 401-404 (1941)

\bibitem{Ohta0} Ohta, S.:  Finsler interpolation inequalities, Calc. Var. Partial Differential Equations, {\bf 36}(2009), 211-249.

\bibitem{Ohta1} Ohta, S.: Comparison Finsler Geometry. Springer Monographs in Mathematics, Springer, Cham (2021)

\bibitem{Ohta3} Ohta, S., Sturm, K.T.: Bochner-Weizenb\"{o}ck formula and Li-Yau estimates on Finsler manifolds. Adv. Math., {\bf 252}, 429-448 (2014)

\bibitem{PeterSprouse} Petersen, P., Sprouse, C.: Integral curvature bounds, distance estimates and applications. J. Differ. Geom., {\bf 50}(2), 269-298 (1998)

\bibitem{PeterWei}Petersen, P., Wei, G.: Relative volume comparison with integral curvature bounds. Geom. Funct. Anal., {\bf 7}(6), 1031-1045 (1997)

\bibitem{PeterW2} Petersen P, Wei G. Analysis and geometry on manifolds with integral Ricci curvature bounds II. Trans Amer Math Soc, 2000, 353(2): 457--478

\bibitem{Ra} Rademacher, H.B.: Nonreversible Finsler metrics of positive flag curvature, In: ``A Sampler of Riemann-Finsler Geometry", MSRI Publications, {\bf 50}, Cambridge: Cambridge University Press, (2004)

\bibitem{shen} Shen, Z.: Volume comparison and its applications in Riemann-Finsler geometry. Adv. Math., {\bf 128}(2), 306-328 (1997)

\bibitem{Shen1} Shen, Z.: Lectures on Finsler Geometry. Singapore: World Scientific, (2001)

\bibitem{Spro} Sprouse, C.: Integral curvature bounds and bounded diameter. Commnu. Anal. Geom., {\bf 8}(3), 531-543(2000)

\bibitem{WeiWylie} Wei, G., Wylie, W.: Comparison geometry for the Bakry-Emery Ricci tensor. J. Differ. Geom., {\bf 83}, 377-406 (2009)

\bibitem{WuXin} Wu, B.Y., Xin, Y.L.: Comparison theorems in Finsler geometry and their applications. Math. Ann., {\bf 337}(1), 177-196 (2007)

\bibitem{WuJY1} Wu, Jia-Y.: Comparison geometry for Integral Bakry-\'{E}mery Ricci tensor bounds. J. Geom. Anal., {\bf 29}, 828-867 (2019)

\bibitem{WuJY2} Wu, Jyh-Y.: Complete manifolds with a little negative curvature. Amer. J. Math., {\bf 113}(4), 567-572 (1991)

\bibitem{Zhao} Zhao, W.: Integral curvature bounds and diameter estimates on Finsler manifolds. Sci. China Math., {\bf 64}(3), 573-588 (2021)

\end{thebibliography}
\end{document}